\definecolor{DimGray}{rgb}{0.41, 0.41, 0.41}
\pgfplotsset{compat=1.15}
\def\sideremark#1{\ifvmode\leavevmode\fi\vadjust{\vbox to0pt{\vss 
      \hbox to 0pt{\hskip\hsize\hskip1em           
 \vbox{\hsize2cm\tiny\raggedright\pretolerance10000
 \noindent #1\hfill}\hss}\vbox to8pt{\vfil}\vss}}} %
\theoremstyle{plain}
\newtheorem{theorem}{Theorem}[section]
\newtheorem{proposition}[theorem]{Proposition}
\newtheorem*{theorem*}{Theorem}
\newtheorem{lemma}[theorem]{Lemma}
\newtheorem{corollary}[theorem]{Corollary}
\theoremstyle{definition}
\newtheorem{defin}[theorem]{Definition}
\newtheorem{question}[theorem]{Question}
\newtheorem{ex}[theorem]{Example}
\theoremstyle{remark}
\newtheorem{rem}[theorem]{Remark}
\def\lcm{\operatorname{lcm}}
\numberwithin{equation}{section}
\title[Miniversal deformations of certain complete intersection monomial curves]{
The miniversal deformation of certain complete intersection monomial curves}
\author{Patricio Almir\'on}
\author{Julio-Jos\'e Moyano-Fern\'andez}
\subjclass[2020]{14B07, 14H10}
\keywords{Monomial curves, complete intersection, deformation theory, moduli space, curve singularities}
\thanks{The first author is supported by Spanish Ministerio de Ciencia, Innovaci\'{o}n y Universidades PID2020-114750GB-C32 and by the IMAG–Maria de Maeztu grant CEX2020-001105-M / AEI /10.13039/501100011033, through a postdoctoral contract in the ‘Maria de Maeztu Programme for Centres of Excellence’. The second author was partially funded by MCIN/AEI/10.13039/501100011033, by “ERDF A way of
making Europe” and by “European Union NextGeneration EU/PRTR”, Grants PID2022-138906NB-C22 and
TED2021-130358B-I00, as well as by Universitat Jaume I, Grants UJI-B2021-02 and GACUJIMA-2023-06.}
\address{Instituto de Matemáticas\\
Universidad de Granada\\
18001, Granada, Spain.}
\email{patricioalmiron@ugr.es}
\address{Universitat Jaume I, Campus de Riu Sec, Departamento de Matem\'aticas \& Institut Universitari de Matem\`atiques i Aplicacions de Castell\'o, 12071
Caste\-ll\'on de la Plana, Spain}
\email{moyano@uji.es}
\begin{document}

\maketitle
\begin{abstract}
The aim of this paper is to provide an explicit basis of the miniversal deformation of a monomial curve defined by a free semigroup---these curves make up a notable family $\mathcal{C}$ of complete intersection monomial curves. First, we dispense a general decomposition result of a basis $B$ of the miniversal deformation of any complete intersection monomial curve. As a consequence, we explicitly calculate $B$ in the particular case of a monomial curve defined from a free semigroup. This direct computation yields some estimates for the dimension of the moduli space of the family $\mathcal{C}$.
\end{abstract}
\section{Introduction}
 Deformations are an intuitive but nevertheless rich tool in the study of curve singularities. A particular type of curve singularities is the class of monomial curves, i.e. those defined through a parameterization  \(C:(t^{a_0},\dots,t^{a_g})\) where \(a_0,\dots,a_g\in\mathbb{N}\) are such that \(\gcd(a_0,\dots,a_g)=1.\) Therefore, the semigroup algebra associated to the curve coincides with the semigroup algebra defined by the numerical semigroup \(\Gamma=a_1\mathbb{N}+\cdots+a_g\mathbb{N}=\langle a_1,\ldots , a_g \rangle.\) We write $C^\Gamma$ for the monomial curve defined by $\Gamma$. In the particular case in which \(C\) is a complete intersection curve singularity, Delorme \cite{delormeglue} provides a recursive method to compute the implict equations \(f_1,\dots,f_g\) of \(C\) depending on the combinatorics and relations between the generators of \(\Gamma.\) In this way, it is natural to ask for an explicit description of the monomial basis of the miniversal deformation in terms of the combinatorics of the numerical semigroup \(\Gamma.\)
\medskip

For complete intersection monomial curve singularities \(C^\Gamma\), it is well-known that the basis of the miniversal deformation is isomorphic as a \(\mathbb{C}\)--vector space to the Tjurina algebra
\[
T^1(C^\Gamma):=\frac{\mathbb{C}[u_0,\dots,u_g]^g}{\left(\frac{\partial f_i}{\partial u_j}\right)_{i,j}\mathbb{C}[u_0,\dots,u_g]^{g+1}+(f_1,\dots,f_g)\mathbb{C}[u_0,\dots,u_g]^g}.
\]
(We will sometimes write just $T^1$ for brevity). This algebra can be endowed naturally with a grading. The corresponding graded components were described by Buchweitz \cite{Buchweitz} in terms of a rather intricate combinatorial formula, even in the complete intersection case. To the best of the authors' knowledge, it is remarkable that Delorme's recursive representation of implicit equations has not been yet employed in the literature to describe the combinatorial aspects of those graded components, nor even to provide an explicit \(\mathbb{C}\)--basis. Recognizing the potential utility of this approach, our main goal is to make full use of Delorme's description for this purpose in order to recursively compute a monomial basis of \(T^1(C^\Gamma).\) First, we observe that Delorme's method provides an order in the implicit equations such that the Jacobian matrix has a block decomposition in terms of the gradient vector of \(f_g,\) the Jacobian matrix of \(f_1,\dots,f_{g_1}\) and the Jacobian matrix of \(f_{g_1+1},\dots,f_{g_1+g_2}\) in such a way that  \(g_1+g_2=g-1.\) Moreover, \(f_1,\dots,f_{g_1}\) and \(f_{g_1+1},\dots,f_{g_1+g_2}\) define two different complete intersection monomial curves \(C_{1},C_{2}.\) Our first main result is that the bases of the miniversal deformations of \(C_{1}\) and \(C_{2}\) are embedded in the basis of the miniversal deformation of \(C^{\Gamma}\) (Theorem \ref{thm:injective}); this result is in fact a generalization of Cassou-Nogu\`es \cite[Th\'eor\`eme~2]{pierette}.
\medskip

Though Delorme's algorithm might initially suggest the capability to achieve the computation of a monomial basis of the miniversal deformation of a complete intersection monomial curve, regrettably we have encountered limitations that hinder the attainment of complete generality in this task. However, imposing some extra conditions on the generators of the complete intersection semigroup one can succeed in the description of the basis. This has been already shown by Cassou-Nogu\`es \cite[Th\'eor\`eme~3]{pierette} in the particular case of a semigroup satisfying the properties
\begin{enumerate}
    \item  $n_ia_i\in a_0\mathbb{N}+a_1\mathbb{N}+\cdots + a_{i-1}n_{i-1}\mathbb{N}$, \ for every $i=1,\ldots , g$;
    \item $n_{i-1}a_{i-1} < a_i$  \ for every $i=1,\ldots , g-1,$
\end{enumerate}
where $n_0=1$ and $n_i=\mathrm{gcd}(a_0,a_1,\ldots , a_{i-1})/\mathrm{gcd}(a_0,a_1,\ldots , a_i)$ for $i=1,\ldots , g$. This kind of numerical semigroups are called plane curve numerical semigroups, as they occurr as semigroups associated to germs of irreducible plane curve singularities, see Zariski \cite{Zarbook}. For this class, she provides the monomial basis in terms of the generators of the semigroup \cite[Th\'eor\`eme~3]{pierette}. Our second main result of this paper (Theorem~\ref{thm:basisdeffree}) yields precisely a generalization of \cite[Th\'eor\`eme~3]{pierette} eliminating the hypothesis of $n_{i-1}a_{i-1} < a_i$  \ for every $i=1,\ldots , g-1$. This leads us to consider only the condition $n_ia_i\in a_0\mathbb{N}+a_1\mathbb{N}+\cdots + a_{i-1}n_{i-1}\mathbb{N}$, \ for every $i=1,\ldots , g$, which gives a class of numerical semigroups called free semigroups (see Section \ref{sec:miversalCI}) larger than the class of plane curve numerical semigroups.
\medskip

An important topic related to the study of the base space of the miniversal deformation of a monomial curve is its connection with the moduli space of projective curves with a given Weierstra\ss~semigroup. For any kind of singularity, the module \(T^1\) can be properly defined by using Schlessinger's formal deformation theory \cite{Sch} (see also the book of Greuel, Lossen and Shustin \cite{Greuelbook}). For monomial curve singularities \(C^\Gamma\), the module \(T^1(C^\Gamma)\) has a natural grading as in the particular case of a complete intersection. Pinkham \cite[Theorem~13.9]{Pinkham1} shows that the negatively graded part of \(T^1(C^\Gamma)\) is strongly linked to the moduli space of projective curves with a prescribed Weierstra\ss~semigroup \(\Gamma\). As first order deformations of a complete intersection singularity are unobstructed, the dimension of the moduli space can be computed from the negative part of \(T^1(C^\Gamma)\). Prior outcomes due to St\"ohr \cite{stohr1}, Contiero and St\"ohr \cite{sthor2}, Stevens \cite{stevens}, or Rim and Vitulli \cite{RV77} have not furnished us with explicit formulas in terms of the generators of the semigroup. Our aim is to iteratively compute the dimension of the moduli space associated to a free semigroup by employing Theorem~\ref{thm:basisdeffree}. Once again, we find it imperative to establish precise lower and upper bounds for this recursive computation, because the intricate combinatorial aspects require a rigorous control in achieving a complete closed formula, as we will see in Section \ref{sec:dimensionmoduli}. The main result of Section \ref{sec:dimensionmoduli} is precisely Theorem \ref{thm:dimensionpositive} which allows us to estimate the dimension of the negative part of \(T^1\) in terms of the generators of the semigroup. Moreover, the bounds are sharp and, in the case of an irreducible plane curve semigroup, Theorem \ref{thm:dimensionpositive} recovers \cite[Th\'eor\`eme~8]{pierette} (see also Proposition \ref{prop:Proposition1}). In addition, Theorem \ref{prop:charirreduciblesemigroup} yields a characterization of a plane curve semigroup in terms of the dimension of the positive part of $T^1$, which appears to be unknown in the literature to our best knowledge. 
\medskip

Besides the plane curve semigroups, a further relevant family of free semigroups are those associated to curves with only one place at infinity, cf.~Abhyankar and Moh \cite{AMoh}, Galindo and Monserrat \cite{GalMonJalg}, say semigroups at infinity for brevity. In this case, we investigate also the moduli space, as said before, and we have been able to compute exactly the dimension of the positive part of the corresponding $T^1$ for the case of semigroups at infinity with three generators in Theorem \ref{th:moduliatinfinity}. Attached to a singular point of an algebraic curve, we can define therefore both the plane curve semigroup and the semigroup at infinity: these describe two different ways to investigate the same singularity so that they are related e.g. through their respective sets of generators \cite[Proposition 2.1]{GalMonJalg}. In \cite{teissierappen}, Teissier showed that the positively graded part of the miniversal deformation of a plane curve semigroup encodes the analytic moduli associated with an irreducible plane curve with a fixed topological type. As we will see in Section \ref{sec:4}, our Theorem \ref{thm:basisdeffree} and the relation between the generators of the semigroup at infinity and the semigroup of the curve establish a sort of duality between the moduli space in the sense of Pinkham associated to the semigroup at infinity with the analytic moduli in the sense of Teissier associated to the semigroup of the curve---astonishingly not investigated so far. We therefore conclude the paper with a final discussion on this issue.
\medskip

\noindent \textbf{Acknowledgements}. The authors wish to express their gratitude to Carlos-Jesús Moreno-Ávila for many stimulating and useful conversations during the preparation of the paper; he provided us with Example \ref{ex:examplebasis} and the \(\delta\)-sequence of Example \ref{example:Zariskiinvariant} as well.



\medskip


\section{Deformations of complete intersection monomial curves}\label{sec:miversalCI}

Let \(\Gamma=\langle a_0,\dots,a_g\rangle \) be a numerical semigroup.  Let \(t\in\mathbb{C}\) be a local coordinate of the germ \((\mathbb{C},0)\) and let \((u_0,\dots,u_g)\in \mathbb{C}^{g+1}\) be local coordinates of the germ \((\mathbb{C}^{g+1}, \boldsymbol{0})\). The monomial curve \( (C^\Gamma, \boldsymbol{0}) \subset (\mathbb{C}^{g+1}, \boldsymbol{0}) \) defined via the parameterization
\[ 
C^\Gamma : u_i = t^{a_i}, \qquad i=1,\ldots, g
\]

is called the monomial curve associated to \(\Gamma\). Write $\mathbb{C}[C^{\Gamma}]:=\mathbb{C}[t^{\nu}\;:\;\nu\in\Gamma],$ which coincides with the semigroup algebra \(\mathbb{C}[\Gamma]\) associated to $\Gamma$. We will use either notation depending on whether we want to highlight the geometric or algebraic interpretation. The numerical semigroup $\Gamma$ is said to be complete intersection if \(\mathbb{C}[\Gamma]\) is a complete intersection; this means that, if \(\Gamma=\langle a_0,\dots,a_g\rangle\) is generated by \(g+1\) elements, then we have an exact sequence
\begin{equation}\label{eq:curvemonomialalgebra}
	0\rightarrow \mathbb{C}[u_0,\dots,u_g]^g\rightarrow\mathbb{C}[u_0,\dots,u_g]\xrightarrow{\varphi} \mathbb{C}[C^{\Gamma}]\rightarrow 0,
\end{equation}
with \(\operatorname{ker}\varphi=(f_1,\dots,f_g)\) and \(f_1,\dots,f_g\) define a regular sequence. In particular, this implies that the monomial curve \((C^\Gamma, \boldsymbol{0}) \) is a complete intersection.
\medskip

In 1976, Delorme \cite[Lemme~7]{delormeglue} showed the following combinatorial characterization of a complete intersection numerical semigroup. Set $A:=\{a_0,\dots,a_g\}$; for \(\Gamma=\langle A\rangle\) we have that \(\Gamma\) is a complete intersection numerical semigroup if and only if there exists a partition \(A=A_1\sqcup A_2\) of the set of generators $A$ with $A_1 \neq \emptyset \neq A_2$ such that \(\mathbb{C}[\Gamma_{A_i/d_i}]\) are complete intersections defined by \(I_i:=\ker\varphi_i\), where \(d_i:=\gcd(A_i)\) for $i=1,2$, and \(\mathbb{C}[\Gamma]\) is defined by \(I_1+I_2+\rho\) with \(\deg(f_g)=\operatorname{lcm}(d_1,d_2).\) More precisely, if we set \(g_i:=|A_i|-1\), then \(g_1+g_2=g-1\) and we have the exact sequences
\begin{equation}\label{eqn:seqgluing}
\begin{split}
     0\rightarrow \mathbb{C}[x_0,\dots,x_{g_1}]^{g_1}\rightarrow\mathbb{C}[x_0,\dots,x_{g_1}]\xrightarrow{\varphi_1} \frac{\mathbb{C}[x_0,\dots,x_{g_1}]}{I_1}\simeq \mathbb{C}[C^{\Gamma_{A_1/d_1}}]\rightarrow0\\
    0\rightarrow \mathbb{C}[y_{0},\dots,y_{g_2}]^{g_2}\rightarrow\mathbb{C}[y_{0},\dots,y_{g_2}]\xrightarrow{\varphi_2} \frac{\mathbb{C}[y_0,\dots,y_{g_2}]}{I_2}\simeq \mathbb{C}[C^{\Gamma_{A_2/d_2}}]\rightarrow 0
\end{split}
\end{equation}

In addition, we can define a binomial \(\rho\) in separated variables with degree \(\lcm(d_1,d_2).\) Thus there is a natural decomposition of the semigroup algebra of \(\Gamma\) in the form
\[\mathbb{C}[\Gamma]=\frac{\mathbb{C}[\Gamma_{A_1/d_1}]\otimes \mathbb{C}[\Gamma_{A_2/d_2}]}{\rho },\]
where we write \(\mathbb{C}[u_0,\dots,u_g]=\mathbb{C}[x_0,\dots,x_{g_1};y_0,\dots,y_{g_2}].\)

\begin{rem}
    The set of generators \(A\) of \(\Gamma\) does not need to be a minimal generating set. 
\end{rem}

On the other hand, given two complex space germs $(X,x)$ and $(S,s)$, a deformation of $(X,x)$ over $(S,s)$ consists of a flat morphism $\phi: (\mathcal{X},x)\to (S,s)$ of complex germs together with an isomorphism from $(X,x)$ to the fibre of $\varphi$, namely $(X,x)\to (\mathcal{X}_s,x):=(\phi^{-1}(s),x)$. Here, $(\mathcal{X},x)$ is called the total space, $(S,s)$ is the base space, and $(\mathcal{X}_s,x)\cong (X,x)$ is the special fibre of the deformation. A deformation is usually denoted by
$$
(i,\varphi): (X,x) \overset{i}{\rightarrow} (\mathcal{X},x) \overset{\phi}{\rightarrow} (S,s).
$$
Roughly speaking, a deformation is called versal if any other deformation results from it by a base change. A deformation is called miniversal if it is versal and the base space is of minimal dimension. The existence of a miniversal deformation for isolated singularities is a celebrated result by Grauert \cite{grauert}. The reader is refereed to \cite[Part II, Sect.~1]{Greuelbook} for further details.


\subsection{The miniversal deformation of a complete intersection monomial curve}
Let \(C^{\Gamma}: (t^{a_0},\dots,t^{a_g})\) be a monomial curve which is a complete intersection defined by \(I=(f_1,\dots,f_g)\). We can apply Tjurina's theorem \cite{Tjurina} which in this case states that the base space of the miniversal deformation of \((C^{\Gamma},\boldsymbol{0})\) is 
\[
T^1(C^\Gamma):=\frac{\mathbb{C}[u_0,\dots,u_g]^g}{\left(\frac{\partial f_i}{\partial u_j}\right)_{i,j}\mathbb{C}[u_0,\dots,u_g]^{g+1}+(f_1,\dots,f_g)\mathbb{C}[u_0,\dots,u_g]^g}.
\]
Moreover, Pinkham \cite{Pinkham1} (see also \cite{BG80,Buchweitz,Deligne,Rim72}) showed that, if \(C^\Gamma\) is a complete intersection monomial curve, then the dimension of \(T^1(C^\Gamma)\) as a \(\mathbb{C}\)--vector space equals the conductor of the semigroup, i.e. \(\dim_\mathbb{C}T^1(C^\Gamma)=c(\Gamma):=\min\{\nu\in \Gamma\ |\ \nu+\mathbb{N}\subset \Gamma\}.\) Even more, from a result of Greuel \cite{Greuel75} as \(C^\Gamma\) is a quasi-homogeneous complete intersection singularity then \(c(\Gamma)=\dim_\mathbb{C}T^1(C^\Gamma)=\mu(C^\Gamma)\) where \(\mu(C^\Gamma)\) is the Milnor number (see also \cite{BG80}). From the decomposition of the semigroup algebra it is easily deduced that the Jacobian matrix presents a simple-to-describe block decomposition. Indeed, if we set \(\Gamma_i:=\Gamma_{A_i/d_i},\) \((h^1_1,\dots,h^1_{g_1})=I_1\) and \((h^2_1,\dots,h^2_{g_2})=I_2\), then the Jacobian matrix of the defining equations of \(C^\Gamma\) has a block decomposition in terms of the Jacobian matrices of \(C^{\Gamma_i}\) and an extra row in terms of the extra new relation as follows:

 \[\left(\frac{\partial f_i}{\partial u_j}\right)_{\tiny \begin{array}{c}
      1\leq i\leq g \\
      0\leq j\leq g 
 \end{array}}=\left(\begin{array}{cc}
     \left(\frac{\partial h^1_i}{\partial x_j}\right) & 0 \\
     0 & \left(\frac{\partial h^2_i}{\partial y_j}\right)
 \\
      \rho_1& \rho_2
 \end{array}\right ),\]
 where we identify \(f_1=h_1^1,f_2=h_2^1,\dots,f_{g_1}=h_{g_1}^1,\) \(f_{g_1+1}=h_1^2,f_{g_1+2}=h_2^2,\dots,f_{g_1+g_2}=h_{g_2}^2\) and \(f_g=\rho_1+\rho_2\) for \(\rho_1=(\partial\rho/\partial u_0,\dots,\partial\rho/\partial u_{g_1})\) and \(\rho_2=(\partial\rho/\partial u_{g_1+1},\dots,\partial\rho/\partial u_{g}).\)
 The consideration of this block decomposition leads to the proof of the fact that the base spaces of the miniversal deformations of \(C^{\Gamma_i} \) are contained in the base space of the miniversal deformation of \(C^{\Gamma}.\) 
 \medskip
 
 To do so, first observe that \(\left(\frac{\partial f_i}{\partial u_j}\right)_{i,j}\mathbb{C}[u_0,\dots,u_g]^{g+1}\) is just the submodule \[\overline{N}_\Gamma=\left\langle\Big(\frac{\partial f_1}{\partial u_0},\dots,\frac{\partial f_g}{\partial u_0}\Big),\dots,\Big(\frac{\partial f_1}{\partial u_g},\dots,\frac{\partial f_g}{\partial u_g}\Big)\right\rangle\subset \mathbb{C}[u_0,\dots,u_g]^{g}.\]

 Let us now define the submodules
 \[
\begin{split}
\overline{N}_{\Gamma_1}:=\left\langle\left(\frac{\partial h^1_1}{\partial x_0},\dots,\frac{\partial h^1_{g_1}}{\partial x_0}\right),\dots,\left(\frac{\partial h^1_1}{\partial x_{g_1}},\dots,\frac{\partial h^1_{g_1}}{\partial x_{g_1}}\right)\right\rangle\subset\mathbb{C}[x_0,\dots,x_{g_1}]^{g_1},&\\
\overline{N}_{\Gamma_2}:=\left\langle\left(\frac{\partial h^2_1}{\partial y_0},\dots,\frac{\partial h^2_{g_2}}{\partial y_0}\right),\dots,\left(\frac{\partial h^2_1}{\partial y_{g_2}},\dots,\frac{\partial h^2_{g_2}}{\partial xy_{g_2}}\right)\right\rangle\subset\mathbb{C}[y_0,\dots,y_{g_2}]^{g_2}.&
\end{split}
\]
 For each \(i=1,2\) we have the canonical projections
\[\tau_1:\mathbb{C}[\Gamma]^g=\bigoplus_{j=1}^{g}\mathbb{C}[\Gamma]e_j\rightarrow \mathbb{C}[\Gamma]^{g_1}=\bigoplus_{j=1}^{g_1}\mathbb{C}[\Gamma]e_j\quad \tau_2:\mathbb{C}[\Gamma]^g\rightarrow \mathbb{C}[\Gamma]^{g_2}=\bigoplus_{j=g_1+1}^{g_1+g_2}\mathbb{C}[\Gamma]e_j,\]
where the $e_j$ build the standard $\mathbb{Z}$-basis. We are now ready to prove the first main result of the paper.

\begin{theorem}\label{thm:injective}
Let \(\Gamma\) be a complete intersection numerical semigroup. Under the previous notation, write \(N_\Gamma:=\varphi(\overline{N}_\Gamma)\) and \(N_i:=\varphi_i(\overline{N}_{\Gamma_i})\) for $i=1,2$. Then, the linear maps \(\tau_1,\tau_2\) induce the following injective morphisms 
$$
\Phi_1 : \mathbb{C}[\Gamma_1]^{g_1}/ N_{1} \longrightarrow \mathbb{C}[\Gamma]^{g}/ N_{\Gamma},\quad
\Phi_2 : \mathbb{C}[\Gamma_2]^{g_2}/ N_{2} \longrightarrow \mathbb{C}[\Gamma]^{g}/ N_{\Gamma},
$$
\end{theorem}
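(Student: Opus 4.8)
The plan is to exhibit $\Phi_1$ and $\Phi_2$ explicitly, check they are well defined, and then verify injectivity by constructing an explicit left inverse. I will treat $\Phi_1$ in detail; the argument for $\Phi_2$ is symmetric.

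First I would define $\Phi_1$ on the level of free modules. Using the identification $\mathbb{C}[u_0,\dots,u_g]=\mathbb{C}[x_0,\dots,x_{g_1};y_0,\dots,y_{g_2}]$, the inclusion of variables gives a ring map $\mathbb{C}[x_0,\dots,x_{g_1}]\hookrightarrow\mathbb{C}[u_0,\dots,u_g]$, hence after passing to quotients a map $\mathbb{C}[\Gamma_1]\to\mathbb{C}[\Gamma]$ (this is injective because $\rho$ is a binomial in separated variables, so $\mathbb{C}[\Gamma]$ is free as a $\mathbb{C}[\Gamma_1]$-module by the tensor decomposition $\mathbb{C}[\Gamma]=\big(\mathbb{C}[\Gamma_1]\otimes\mathbb{C}[\Gamma_2]\big)/\rho$). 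Sending the $j$-th basis vector $e_j$ of $\mathbb{C}[\Gamma_1]^{g_1}$ to the $j$-th basis vector of $\mathbb{C}[\Gamma]^g$ for $1\le j\le g_1$ then yields $\widetilde\Phi_1:\mathbb{C}[\Gamma_1]^{g_1}\to\mathbb{C}[\Gamma]^g$. For well-definedness I must show $\widetilde\Phi_1(N_1)\subseteq N_\Gamma$: the generators of $\overline{N}_{\Gamma_1}$ are the rows $\big(\partial h^1_1/\partial x_k,\dots,\partial h^1_{g_1}/\partial x_k\big)$, and by the block shape of the Jacobian displayed above, each such row is exactly the projection $\tau_1$ of the $k$-th row $\big(\partial f_1/\partial u_k,\dots,\partial f_g/\partial u_k\big)$ of the Jacobian of $C^\Gamma$ (for $0\le k\le g_1$), since in that range the $f_{g_1+1},\dots,f_{g_1+g_2}$ entries vanish and the last entry $\partial f_g/\partial x_k=\partial\rho/\partial x_k=(\rho_1)_k$ is killed by $\tau_1$ after we arrange matters correctly. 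Hence $\widetilde\Phi_1$ followed by the quotient sends $N_1$ into $N_\Gamma$, so $\Phi_1$ is well defined.

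The key step is injectivity. Here I would build a retraction. Consider the substitution homomorphism $s:\mathbb{C}[u_0,\dots,u_g]\to\mathbb{C}[x_0,\dots,x_{g_1}]$ that fixes the $x_j$ and sends each $y_j$ to the monomial $t^{a_{g_1+j}}$ in a copy of $\mathbb{C}[x_0,\dots,x_{g_1}]$ — concretely, choose values $\lambda_j$ so that the binomial $\rho$ maps to $0$; such a specialization exists because $\rho$ has degree $\mathrm{lcm}(d_1,d_2)$ and its $y$-part is a power $y_0^{m}$ (or a pure power of one $y$-variable) up to the separated-variables normalization, which can be hit by a suitable element of $\mathbb{C}[\Gamma_1]$ since $d_1\mathbb{N}$ contains $\mathrm{lcm}(d_1,d_2)$. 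This $s$ restricts to a ring retraction $\mathbb{C}[\Gamma]\to\mathbb{C}[\Gamma_1]$ splitting the inclusion. Extending $s$ to modules by $\tau_1$ on the basis vectors, I get $\sigma_1:\mathbb{C}[\Gamma]^g\to\mathbb{C}[\Gamma_1]^{g_1}$ with $\sigma_1\circ\widetilde\Phi_1=\mathrm{id}$. The remaining point is that $\sigma_1(N_\Gamma)\subseteq N_1$: applying $s$ to a generator $\big(\partial f_1/\partial u_k,\dots,\partial f_g/\partial u_k\big)$ of $N_\Gamma$, the block structure shows that for $0\le k\le g_1$ we land (after $\tau_1$) on $s$ applied to the row $\big(\partial h^1_1/\partial x_k,\dots\big)$, which is already in $\mathbb{C}[x_0,\dots,x_{g_1}]$ and equals that generator of $\overline{N}_{\Gamma_1}$; for $g_1+1\le k\le g$, the only possibly nonzero entries after $\tau_1$ come from $\partial f_g/\partial u_k=\partial\rho/\partial u_k$, and $s(\partial\rho/\partial y_{k-g_1})$ lies in the ideal of $\mathbb{C}[\Gamma_1]$ that one checks is contained in $N_1$ (using that $\rho$ is a binomial so its partials are monomials, times the relation $\rho\mapsto 0$, together with the defining equations $h^1_i$ being part of $I_1$). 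Thus $\sigma_1$ descends to $\mathbb{C}[\Gamma]^g/N_\Gamma\to\mathbb{C}[\Gamma_1]^{g_1}/N_1$ and is a left inverse of $\Phi_1$, proving $\Phi_1$ injective.

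The main obstacle I anticipate is the bookkeeping in the last inclusion $s(\partial\rho/\partial y_{k-g_1})\in N_1$ for $k>g_1$: one has to use the precise normal form of Delorme's binomial $\rho$ (degree $\mathrm{lcm}(d_1,d_2)$, separated variables) to see that its $y$-partials, after the specialization $s$ killing $\rho$, reduce modulo the relations defining $\mathbb{C}[\Gamma_1]$ into the submodule generated by the columns of the Jacobian of $h^1_1,\dots,h^1_{g_1}$ — this is where the complete-intersection hypothesis on $\Gamma_1$ (so that the $h^1_i$ form a regular sequence and $\overline{N}_{\Gamma_1}$ behaves well) is really used. Once that is in place, everything else is formal. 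For $\Phi_2$ one repeats the argument with the roles of the $x$- and $y$-variables exchanged; note the asymmetry that $f_g=\rho_1+\rho_2$ is shared, but since $\rho$ is in separated variables the same specialization trick applies on the other side.
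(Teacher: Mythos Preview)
Your approach has a genuine gap at the construction of the ring retraction \(s:\mathbb{C}[\Gamma]\to\mathbb{C}[\Gamma_1]\). No such \(\mathbb{C}\)-algebra retraction can exist: both \(\mathbb{C}[\Gamma]\) and \(\mathbb{C}[\Gamma_1]\) are one-dimensional integral domains, and the inclusion \(\mathbb{C}[\Gamma_1]\hookrightarrow\mathbb{C}[\Gamma]\) is proper. A surjective ring map between domains has prime kernel, and for affine \(\mathbb{C}\)-domains the dimension formula forces \(\operatorname{ht}(\ker s)=\dim\mathbb{C}[\Gamma]-\dim\mathbb{C}[\Gamma_1]=0\), hence \(\ker s=0\); so any retraction would be an isomorphism, a contradiction. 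Concretely, your specialization must send \(\rho=\rho_1(x)-\rho_2(y)\) to zero, i.e.\ force \(\rho_2(s(y))=\rho_1(x)\) in \(\mathbb{C}[\Gamma_1]\). Take \(A_1=\{4,6\}\), \(A_2=\{5\}\): then \(\Gamma_1=\langle 2,3\rangle\), \(\rho=y_0^2-x_0x_1\), and a retraction would require \(p\in\mathbb{C}[t^2,t^3]\) with \(p^2=t^5\), impossible by parity of degree. Your justification that ``\(d_1\mathbb{N}\) contains \(\operatorname{lcm}(d_1,d_2)\)'' only says the \emph{degree} can be matched, not that the required root exists in \(\mathbb{C}[\Gamma_1]\).

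A secondary issue: the obstacle you flag at the end is not one. For \(k>g_1\) the column \(\big(\partial f_1/\partial u_k,\dots,\partial f_g/\partial u_k\big)\) has all its first \(g_1\) entries zero (the \(h^1_i\) do not involve \(y\)-variables), so \(\tau_1\) already kills it; the entry \(\partial\rho/\partial y_{k-g_1}\) sits in position \(g\), not among the first \(g_1\), and never reaches \(N_1\). The paper's proof avoids any retraction: it works directly with \(\mathcal{N}:=\tau_1(N_\Gamma)\subset\mathbb{C}[\Gamma]^{g_1}\), observes via the block structure that \(\mathcal{N}\) is generated over \(\mathbb{C}[\Gamma]\) by exactly the images of the generators of \(N_1\), and then argues that the inclusion \(\mathbb{C}[\Gamma_1]^{g_1}/N_1\hookrightarrow\mathbb{C}[\Gamma]^{g_1}/\mathcal{N}\) is injective. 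The underlying reason is that \(\mathbb{C}[\Gamma]=(\mathbb{C}[\Gamma_1]\otimes\mathbb{C}[\Gamma_2])/(\rho)\) is \emph{free} as a \(\mathbb{C}[\Gamma_1]\)-module (a \(\mathbb{C}\)-basis of \(\mathbb{C}[\Gamma_2]/(\rho_2)\) does the job, since \(\rho_2\) is a nonzerodivisor), hence faithfully flat; so a \(\mathbb{C}[\Gamma_1]\)-linear relation holding after base change to \(\mathbb{C}[\Gamma]\) already holds in \(\mathbb{C}[\Gamma_1]\). If you want to salvage a left-inverse argument, use this freeness to build a \(\mathbb{C}[\Gamma_1]\)-\emph{linear} (not multiplicative) splitting of \(\mathbb{C}[\Gamma_1]\hookrightarrow\mathbb{C}[\Gamma]\) and check it carries \(\mathcal{N}\) into \(N_1\); your ring-homomorphism version cannot be made to work.
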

\begin{proof}
   First we observe that
   \[
N_\Gamma=\left\langle\left(\sum_{j=1}^g\varphi\Big (\frac{\partial f_j}{\partial u_0}\Big)e_j,\dots, \sum_{j=1}^g\varphi \Big(\frac{\partial f_j}{\partial u_g}\Big)e_j\right)\right\rangle
   \]
   is a submodule of \(\bigoplus_{j=1}^{g}\mathbb{C}[\Gamma]e_j.\)
   Hence, in order to compare \(N_i\) with \(N_\Gamma\) we need first to understand the relation between the maps \(\varphi_i\) and the map \(\varphi.\) Thanks to the tensor product decomposition of \(\mathbb{C}[\Gamma]\) we can write 
   \begin{equation}\label{eqn:paramdecompo}
       \varphi=\pi\circ\lambda_{\rho}\circ({\varphi_1\otimes\varphi_2}),
   \end{equation}
   
   where \(\lambda_\rho\) is the multiplication by \(\rho\) in \(\mathbb{C}[\Gamma_1]\otimes\mathbb{C}[\Gamma_2]\) and \(\pi\) is the canonical projection. By the hypothesis, \(\rho\) is a regular element of \(\mathbb{C}[\Gamma_1]\otimes\mathbb{C}[\Gamma_2]\) and thus yields the exact sequence
   \[0\rightarrow\mathbb{C}[\Gamma_1]\otimes\mathbb{C}[\Gamma_2]\xrightarrow{\lambda_\rho} \mathbb{C}[\Gamma_1]\otimes\mathbb{C}[\Gamma_2]\xrightarrow{\pi}\frac{\mathbb{C}[\Gamma_1]\otimes\mathbb{C}[\Gamma_2]}{(\rho)}\rightarrow 0.\]
   Therefore the following diagram commutes:
   \begin{equation*}
   \begin{tikzcd}[
  column sep=scriptsize,row sep=scriptsize,
  ar symbol/.style = {draw=none,"\textstyle#1" description,sloped},
  isomorphic/.style = {ar symbol={\cong}},
  ]
  \mathbb{C}[u_0,\dots,u_g]\ar[r,"\varphi"]& \mathbb{C}[C^{\Gamma}]\ar[r]&0\\
  \mathbb{C}[x_0,\dots,x_{g_1}]\otimes \mathbb{C}[y_0,\dots,y_{g_2}]\ar[u,isomorphic]\ar[r,"\varphi_1\otimes\varphi_2"]&  \mathbb{C}[\Gamma_1]\otimes\mathbb{C}[\Gamma_2]\ar[u,"\pi\circ \lambda_\rho"]\ar[r]&0.
  \end{tikzcd}
   \end{equation*}
   Now we are ready to prove the statement. We will only check that \(\Phi_1\) is injective, as the result for \(\Phi_2\) may be handled in much the same way.
\medskip

   Let us denote by \(\mathcal{N}:=\tau_1(N_\Gamma),\) the submodule of \(\bigoplus_{j=1}^{g_1}\mathbb{C}[\Gamma]e_j\) which is generated as 
   \[
\mathcal{N}=\left\langle\sum_{j=1}^{g_1}\varphi\Big(\frac{\partial f_j}{\partial u_0}\Big)e_j,\dots, \sum_{j=1}^{g_1}\varphi\Big(\frac{\partial f_j}{\partial u_g}\Big)e_j\right\rangle=\left\langle\sum_{j=1}^{g_1}\varphi\Big(\frac{\partial h_j^1}{\partial x_0}\Big)e_j,\dots, \sum_{j=1}^{g_1}\varphi\Big(\frac{\partial h_j^1}{\partial x_{g_1}}\Big)e_j\right\rangle.
   \]
   The equality follows from the identification \(x_i=u_i\) for \(i=0,\dots,g_1\) and the fact that \(\partial h_j^1/\partial u_j=0\) for \(j=g_1+1,\dots,g.\)
\medskip

The previous considerations yield the commutative diagram
   \begin{equation*}
   \begin{tikzcd}[
  column sep=scriptsize,row sep=normal,
  ar symbol/.style = {draw=none,"\textstyle#1" description,sloped},
  isomorphic/.style = {ar symbol={\cong}},
  ]
  \bigoplus_{j=0}^{g_1}\mathbb{C}[u_0,\dots,u_g]e_j\ar[r,"\varphi_1\otimes\varphi_2"]& \bigoplus_{j=0}^{g_1}\mathbb{C}[C^{\Gamma_1}]\otimes\mathbb{C}[\Gamma_2]e_j\ar[r,"\pi\circ\lambda_\rho"]&\bigoplus_{j=0}^{g_1}\mathbb{C}[C^{\Gamma}]e_j\ar[r,"\pi_1"]&\frac{\bigoplus_{j=0}^{g_1}\mathbb{C}[C^{\Gamma}]e_j}{\mathcal{N}}\ar[r]&0\\
  \bigoplus_{j=0}^{g_1}\mathbb{C}[x_0,\dots,x_{g_1}]e_j\ar[u,hook,"i"]\ar[r,"\varphi_1"]& \bigoplus_{j=0}^{g_1} \mathbb{C}[\Gamma_1]e_j\ar[u,hook,"i"]\ar[r,"\pi_2"]&\frac{\bigoplus_{j=0}^{g_1} \mathbb{C}[\Gamma_1]e_j}{N_1}\ar[ur,dashed,"\Phi'_1"]\ar[r]&0 &
  \end{tikzcd}
   \end{equation*}
   As \(\Phi_1=\tau_1^{-1}\circ\Phi'_1\), for the proof of the injectivity of \(\Phi_1\) it is enough to prove the injectivity of \(\Phi'_1\). In order to show the injectivity of \(\Phi'_1\) it is enough to prove that 
   \[\pi\circ \lambda_\rho\circ i(N_1)=\mathcal{N},\]
   however this follows by definition of \(N_1,\) which is 
   \[
   N_1=\left\langle\sum_{j=1}^{g_1}\varphi_1\Big(\frac{\partial h_j^1}{\partial x_0}\Big)e_j,\dots, \sum_{j=1}^{g_1}\varphi_1\Big(\frac{\partial h_j^1}{\partial x_{g_1}}\Big)e_j\right\rangle,
   \]
   and the fact that the map \(i\) is defined as \(z(x_0\dots,x_{g_1})\mapsto z\otimes 1\in\mathbb{C}[u_0,\dots,u_g].\)
\end{proof}

Observe that Theorem \ref{thm:injective} implies that the base spaces of the miniversal deformations of \(C^{\Gamma_1}\) and \(C^{\Gamma_2}\) are embedded in the miniversal deformation of \(C^{\Gamma}.\) This is quite a specific and intrinsic property of a complete intersection monomial curve, as in this case the splitting of the defining ideal generates a sort of Thom-Sebastiani decomposition of the monomial algebra, in a similar spirit as in \cite{almiron}. In general, if we have a non-monomial complete intersection curve singularity, the defining ideal does not necessarily split in this way, thus this kind of decomposition in separated variables is not available.
\medskip


\subsection{Miniversal deformation of a free semigroup}
In this part, we will consider a particular class of complete intersection numerical semigroups which are called free semigroups. Consider a numerical semigroup \(\Gamma\) generated (not necessarily minimally) by \(G:=\{a_0, a_1, \dots, a_g\}\). Assume that \(G\) satisfies the condition
 \begin{equation}\label{eq:freecond}
     n_ia_i\in\langle a_0, a_1, \dots, a_{i-1} \rangle,
 \end{equation}
for all \(i=1,\dots,g,\) where \(n_{i}:=\gcd(a_0, a_1, \dots, a_{i-1})/\gcd(a_0, a_1, \dots, a_i)\). A numerical semigroup admitting a set of generators \(G\) satisfying \eqref{eq:freecond} for all \(i\geq 1\) was named \textit{free numerical semigroup} by Bertin and Carbonne \cite{beca77}. Moreover, without loss of generality we can further assume that \(n_i>1\) for all \(1\leq i\leq g\). From now on, when referring to a free semigroup \(\Gamma=\langle a_0,\dots, a_g\rangle\) we mean that the set \(G\) satisfies the condition \eqref{eq:freecond} for all \(i\geq 1\) and \(n_i>1\) for all \(1\leq i \leq g.\)
\medskip

Let \(\Gamma=\langle a_0,\dots, a_g\rangle\) be a free semigroup. From the condition \eqref{eq:freecond}, for each \(i\) there exist numbers \(\ell_{0}^{(i)},\dots,\ell_{i-1}^{(i)}\in\mathbb{N}\) such that

\[
n_ia_i=\ell_{0}^{(i)}a_0+\cdots+\ell_{i-1}^{(i)}a_{i-1}.
\]
Therefore it is easy to see that the equations 
\[
f_i=u_{i}^{n_i}-u_{0}^{\ell_{0}^{(i)}}u_{1}^{\ell_{1}^{(i)}}\cdots u_{i-1}^{\ell_{i-1}^{(i)}}=0\quad\text{for}\;1\leq i\leq g
\]
define the curve \(C^{\Gamma}\). In this case, we will explicitly describe the \(\mathbb{C}\)--basis of \(T^1(C^{\Gamma}).\)
To do so, we need first to define an auxiliary set
$$
E_{\ell_0^{(1)},n_1}=\Big \{(i,j)\in \mathbb{N}^2 \ : \ 0\leq i \leq \ell_{0}^{(1)}-2 \ \text{ and } \  0\leq j \leq n_1-2 \Big \}.
$$

In a next step, we need an iteration process to introduce a second auxiliary family of sets as follows: For $s\geq 2$, we consider as base cases

\begin{align*}
    I_{\ell_0^{(s)}}&=\Big\lbrace (i,j)\in \mathbb{N}^2 \ : \ 0\leq i \leq \ell_0^{(s)}-1 \text{ and } \  0\leq j\leq n_1-1\Big\rbrace \\
    I_{\ell_1^{(s)}}&=\Big\lbrace (i,j)\in \mathbb{N}^2 \ : \  0\leq i - \ell_0^{(s)}\leq \ell_0^{(1)}-1 \text{ and } \ 0\leq j\leq \ell_1^{(s)}-1 \Big\rbrace
\end{align*}

so that

\begin{equation*}
    D_{\ell_1^{(s)}}'=\left\{\begin{array}{ll}
       I_{\ell_0^{(s)}}  & \text{if }\ell_1^{(s)} = 0. \\
       I_{\ell_0^{(s)}}\cup I_{\ell_1^{(s)}}  & \text{if }\ell_1^{(s)} \neq 0.
    \end{array}\right.
\end{equation*}
This allows us to define iteratively, for $t=3,\ldots , s$, the sets
\begin{align*}
        I_{\ell_{t-2}^{(s)}}&=\Bigg\lbrace \left(k_0,\ldots, k_{t-1}\right)\in\mathbb{N}^t \ : \ 0\leq k_{t-1}\leq n_{t-1}-1 \text{ and }\left(k_0,\ldots, k_{t-2}\right)\in D_{\ell_{t-2}^{(s)}}'\Bigg\rbrace\\
        I_{\ell_{t-1}^{(s)}}&=\Bigg\lbrace \left(k_0,\ldots, k_{t-1}\right)\in\mathbb{N}^t \ : \ 0\leq k_{t-1}\leq \ell_{t-1}^{(s)}-1 \text{ and }\left(k_0-\ell_0^{(s)},\ldots, k_{t-2}-\ell_{t-2}^{(s)}\right)\in D_{\ell_{t-2}^{(t-1)}}'\Bigg\rbrace\\[2mm]
\end{align*}
and
\begin{equation*}
    D_{\ell_{t-1}^{(s)}}'=\left\{\begin{array}{ll}
       I_{\ell_{t-2}^{(s)}}  & \text{if }\ell_{t-1}^{(s)} = 0, \\
       I_{\ell_{t-2}^{(s)}}\cup I_{l_{t-1}^{(s)}}  & \text{if }\ell_{t-1}^{(s)} \neq 0.
    \end{array}\right.
\end{equation*}

The sets \(D_{\ell_{s-1}^{(s)}}'\) have the following interpretation: 
\begin{lemma}\label{lem:pierrettelemma4}
For every \(s\geq 2\), the set
     \[
     \Big\{u_0^i u_1^j u_2^{k_2}\cdots u_s^{k_s}\;\colon\;(i,j,\ldots , k_{s-1})\in D_{\ell_{s-1}^{(s)}}'\Big\}
     \]
     is a system of generators of \(\mathbb{C}[u_0,\dots,u_s]/(f_1,\dots,f_s).\)
\end{lemma}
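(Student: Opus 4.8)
The plan is to prove Lemma \ref{lem:pierrettelemma4} by induction on $s$, exploiting the fact that $\mathbb{C}[u_0,\dots,u_s]/(f_1,\dots,f_s)$ is obtained from $\mathbb{C}[u_0,\dots,u_{s-1}]/(f_1,\dots,f_{s-1})$ by adjoining $u_s$ subject to the single relation $f_s = u_s^{n_s} - u_0^{\ell_0^{(s)}}\cdots u_{s-1}^{\ell_{s-1}^{(s)}}$. More precisely, I would argue that $R_s := \mathbb{C}[u_0,\dots,u_s]/(f_1,\dots,f_s)$ is a free $R_{s-1}$-module with basis $1, u_s, \dots, u_s^{n_s-1}$ (this is essentially the freeness/complete-intersection structure, reflected in the exact sequences \eqref{eqn:seqgluing}, together with the fact that $f_s$ is monic of degree $n_s$ in $u_s$). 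Granting this, a generating set for $R_s$ as a $\mathbb{C}$-vector space is obtained by multiplying a generating set for $R_{s-1}$ by each power $u_s^{k_s}$, $0 \le k_s \le n_s - 1$; so the combinatorial heart of the lemma is showing that the recursive definition of $D'_{\ell_{s-1}^{(s)}}$ correctly tracks this process, taking into account that the generating set from the induction hypothesis is indexed by $D'_{\ell_{s-2}^{(s-1)}}$ rather than $D'_{\ell_{s-2}^{(s)}}$.

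The first step is to treat the base cases $s=1$ and $s=2$ by hand: for $s=1$ the ring is $\mathbb{C}[u_0,u_1]/(u_1^{n_1}-u_0^{\ell_0^{(1)}})$, and $\{u_0^i u_1^j : 0\le i, 0 \le j \le n_1 - 1\}$ is a generating set; one checks directly that $D'_{\ell_1^{(2)}} = I_{\ell_0^{(2)}} \cup I_{\ell_1^{(2)}}$ (or just $I_{\ell_0^{(2)}}$ when $\ell_1^{(2)}=0$) gives the claimed generators of $R_2$, by reducing $u_0^{\ell_0^{(2)}}$-powers in the $u_0$-exponent using the relation $f_2$ and then reducing $u_0$-powers using $f_1$. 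This is where the slightly asymmetric index ranges ($0 \le i \le \ell_0^{(s)}-1$ versus $0 \le i - \ell_0^{(s)} \le \ell_0^{(1)}-1$) first appear and need to be explained: the set $I_{\ell_{t-2}^{(s)}}$ records the monomials in which we have \emph{not} yet used the relation $f_{t-1}$ to lower $u_{t-2}^{\ell_{t-2}}$, while $I_{\ell_{t-1}^{(s)}}$ records those obtained \emph{after} one application, hence the shift by $\ell_0^{(s)},\dots,\ell_{t-2}^{(s)}$ and the appeal to $D'_{\ell_{t-2}^{(t-1)}}$ (the generating set for $R_{t-1}$, which is what $u_0^{\ell_0^{(s)}}\cdots u_{t-2}^{\ell_{t-2}^{(s)}}$ gets multiplied into).

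For the inductive step, assuming the statement for $s-1$, I would take an arbitrary monomial $u_0^{k_0}\cdots u_s^{k_s}$, first reduce $k_s$ modulo $n_s$ using $f_s$ (picking up a factor $u_0^{\ell_0^{(s)}}\cdots u_{s-1}^{\ell_{s-1}^{(s)}}$ each time), and then apply the induction hypothesis to the resulting elements of $R_{s-1}$, carefully distinguishing the two cases according to whether the reduction introduced the extra factor $u_0^{\ell_0^{(s)}}\cdots u_{s-1}^{\ell_{s-1}^{(s)}}$ or not: in the first case the induction hypothesis is applied to $R_{s-1}$ directly (exponents land in $D'_{\ell_{s-2}^{(s-1)}}$, matching the definition of $I_{\ell_{s-1}^{(s)}}$ after the shift), and in the second case to the already-reduced tail (exponents land in $D'_{\ell_{s-2}^{(s)}}$, matching $I_{\ell_{s-2}^{(s)}}$). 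Matching these two cases against the definition $D'_{\ell_{s-1}^{(s)}} = I_{\ell_{s-2}^{(s)}} \cup I_{\ell_{s-1}^{(s)}}$ completes the induction.

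The main obstacle I anticipate is purely bookkeeping: keeping straight which superscript $(s)$, $(s-1)$, $(t-1)$ attaches to each $\ell$ and each $D'$, and verifying that the index shifts in the definition of $I_{\ell_{t-1}^{(s)}}$ are exactly those produced by a single application of $f_{t-1}$, together with the convention that negative indices are set to $0$ (which handles the degenerate cases $\ell_{t-1}^{(s)}=0$ uniformly). There is no deep idea beyond the free-module structure over $R_{s-1}$; the content is in making the recursion transparent. I would also remark that the lemma only asserts a \emph{generating} set, not a basis, so no linear-independence argument is needed here — that refinement, if needed, is deferred to the subsequent construction of the monomial basis of $T^1(C^\Gamma)$.
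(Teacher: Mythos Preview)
Your plan is sound and is exactly the kind of argument the authors have in mind: the paper does not give an independent proof but simply says ``The proof is analogous to the one of \cite[Lemme~4]{pierette}'', i.e.\ Cassou-Nogu\`es's inductive argument in the plane-branch case. Your outline---reduce $k_s$ modulo $n_s$ using $f_s$ to view $R_s$ as free of rank $n_s$ over $R_{s-1}$, then invoke the inductive description of generators of $R_{s-1}$ and match the resulting exponent shifts against the recursive definition of $D'_{\ell_{s-1}^{(s)}}=I_{\ell_{s-2}^{(s)}}\cup I_{\ell_{s-1}^{(s)}}$---is precisely that analogy, so there is nothing to add beyond the bookkeeping you already flag.
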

\begin{proof}
The statement can be proved in much the same way as \cite[Lemme~4]{pierette}. 
\end{proof}

Finally, let us denote by \(\mathbf{h}(I_\bullet):=\max\{(k_0,\dots,k_t)\in I_\bullet\} \) the maximal point in a set \(I_\bullet\) with respect to the lexicographical order in \(\mathbb{N}^t.\) Thus, for \(s\geq 2\) we define
\[
D_{\ell_{s-1}^{(s)}}:=\left\{\begin{array}{lc}
    D'_{\ell_{s-1}^{(s)}}\setminus\{\mathbf{h}(I_{\ell_{s-2}^{(s)}})\} &\text{if}\;\ell_{s-1}^{(s)}=0,\\[12pt]
    D'_{\ell_{s-1}^{(s)}}\setminus\{\mathbf{h}(I_{\ell_{s-1}^{(s)}})\} &\text{if}\;\ell_{s-1}^{(s)}\neq 0. 
\end{array}\right.
\]

The sets \(D_{\ell_{s-1}^{(s)}}\) allow us to describe a basis of the quotient $\mathbb{C}$-vector space $\mathbb{C}[\Gamma]^g/N$:

\begin{theorem}\label{thm:basisdeffree}
Let \(\Gamma=\langle a_0,\dots, a_g\rangle\) be a free numerical semigroup. Consider the standard basis given by the (column) unit vectors  $\vec{e}_1,\ldots , \vec{e}_g$. A basis of the $\mathbb{C}$-vector space $T^1=\mathbb{C}[\Gamma]^g/N$ consists of the images over $\mathbb{C}[\Gamma]^g/N$ of the following column vectors:
\begin{enumerate}
    \item For $(k_0,k_1)\in E_{\ell_0^{(1)},n_1}$, with $m=2,\ldots , g$ and $k_m=0,1,\ldots , n_m-1$, we have 
    $$(u_0^{k_0}u_1^{k_1}u_2^{k_2}\cdots u_g^{k_g}) \vec{e}_1$$
    \item For $(k_0,k_1)\in D_{\ell_1^{(2)}}$, with $k_2=0,\ldots , n_2-2$, $m=3,\ldots , g$ and $k_m=0,\ldots , n_m-1$, we have
    $$(u_0^{k_0}u_1^{k_1}u_2^{k_2}\cdots u_g^{k_g}) \vec{e}_2$$
    \item For $(k_0,k_1,\ldots , k_{m-1})\in D_{\ell_{m-1}^{(m)}}$ with $k_m=0,\ldots , n_m-2$ and $m'=m+1,\ldots , g$ so that $k_{m'}=0,\ldots , n_{m'}-1$, we have
    $$(u_0^{k_0}u_1^{k_1}u_2^{k_2}\cdots u_g^{k_g}) \vec{e}_m,$$
    for $m=3,\ldots , g-1$.\\[1pt]
    \item For $(k_0,k_1,\ldots , k_{g-1})\in D_{\ell_{g-1}^{(g)}}$ with $k_g=0,\ldots , n_g-2$, we have
    $$(u_0^{k_0}u_1^{k_1}u_2^{k_2}\cdots u_g^{k_g}) \vec{e}_g$$
\end{enumerate}
\end{theorem}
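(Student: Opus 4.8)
The plan is to run an induction on $g$ that mirrors the recursive construction of the sets $D'_{\ell_{s-1}^{(s)}}$ and $D_{\ell_{s-1}^{(s)}}$, using the gluing decomposition from Delorme's lemma together with Theorem \ref{thm:injective}. For a free semigroup $\Gamma=\langle a_0,\dots,a_g\rangle$ the natural gluing splits $A=A_1\sqcup A_2$ with $A_1=\{a_0,\dots,a_{g-1}\}$ and $A_2=\{a_g\}$ (so $g_1=g-1$, $g_2=0$), where $\Gamma_1=\Gamma_{A_1/d_1}$ is again free and $\Gamma_2$ is the trivial semigroup $\langle 1\rangle$ with $\mathbb C[\Gamma_2]=\mathbb C$. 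The block-decomposed Jacobian then has the gradient of $f_g=u_g^{n_g}-u_0^{\ell_0^{(g)}}\cdots u_{g-1}^{\ell_{g-1}^{(g)}}$ as its last row, and $\partial f_g/\partial u_g=n_g u_g^{n_g-1}$ in the $(g,g)$ slot. The core computation is therefore to understand $T^1(C^\Gamma)=\mathbb C[\Gamma_g]^g/N_\Gamma$ as an extension of $T^1(C^{\Gamma_1})=\mathbb C[\Gamma_{g-1}]^{g-1}/N_{\Gamma_1}$, controlled by this one extra relation and one extra column.

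First I would set up the base case $g=1$, where $f_1=u_1^{n_1}-u_0^{\ell_0^{(1)}}$ defines a plane curve; here $T^1$ is spanned by $u_0^j u_1^i \vec e_1$ with $0\le j\le \ell_0^{(1)}-2$, $0\le i\le n_1-2$, i.e. exactly case (1) with no higher $k_m$, matching $E_{\ell_0^{(1)},n_1}$. Then, for the inductive step, I would describe $N_\Gamma$ explicitly: it is generated by the column vectors $(\varphi(\partial f_1/\partial u_\mu),\dots,\varphi(\partial f_g/\partial u_\mu))$ for $\mu=0,\dots,g$. Because only $f_g$ involves $u_g$, the last coordinate of each generator is $\varphi(\partial f_g/\partial u_\mu)$, and the only generator with nonzero last coordinate in the "$u_g$-direction" is the $\mu=g$ one, contributing $n_g t^{(n_g-1)a_g}$ in the $\vec e_g$-slot. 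Quotienting by $f_g$ (which is already built into $\mathbb C[\Gamma_g]$) lets one reduce every $u_g$-exponent below $n_g$; combined with the relation from the $\mu=g$ column, the $\vec e_g$-component is killed down to exponents $k_g\le n_g-2$, which explains the "$n_g-2$" bounds in cases (2)--(4) and the "$n_g-1$" in case (1) (the $\vec e_1$-slot is not directly hit by the $\mu=g$ column in the same way). I would then invoke Theorem \ref{thm:injective}: $\Phi_1$ embeds $T^1(C^{\Gamma_1})$ into $T^1(C^\Gamma)$, so a basis of the former (known by induction, giving the $\vec e_1,\dots,\vec e_{g-1}$ parts with the recursively defined $D$-sets for the smaller semigroup) pulls into a linearly independent subset of the latter; multiplying by the extra monomials $u_g^{k_g}$ with $k_g$ in the allowed range accounts for tensoring with $\mathbb C[u_g]/(u_g^{n_g})$ up to the one relation.

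The remaining work is a dimension count: by Pinkham's theorem $\dim_{\mathbb C}T^1(C^\Gamma)=c(\Gamma)$, the conductor, and there is a product-type formula for $c(\Gamma)$ of a free semigroup in terms of the $n_i$ and $\ell_j^{(i)}$ (essentially $c(\Gamma)=\sum_i (n_i-1)\,n_{i+1}\cdots n_g\,a_i - \dots$, the standard free-semigroup conductor identity). I would show that the number of vectors listed in (1)--(4), namely $|E_{\ell_0^{(1)},n_1}|\cdot\prod_{m\ge 2}n_m + \sum_{m=2}^{g}|D_{\ell_{m-1}^{(m)}}|\cdot(n_m-1)\cdot\prod_{m'>m}n_{m'}$, equals $c(\Gamma)$; this is the combinatorial heart and is where I expect Lemma \ref{lem:pierrettelemma4} to do the work, since $|D'_{\ell_{s-1}^{(s)}}|$ is the number of monomial generators of $\mathbb C[u_0,\dots,u_s]/(f_1,\dots,f_s)$ and removing one top element (passing from $D'$ to $D$) accounts for the relation $f_s$ itself. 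Granting the count and the independence from Theorem \ref{thm:injective}, one concludes the listed vectors form a basis.

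The main obstacle I anticipate is not the embedding or the spanning but reconciling the two different "shifts" in the definition of $I_{\ell_{t-1}^{(s)}}$ (the block $D'_{\ell_{t-2}^{(s)}}$ versus the shifted block $D'_{\ell_{t-2}^{(t-1)}}$ sitting at offset $(\ell_0^{(s)},\dots,\ell_{t-2}^{(s)})$) with the actual reduction of monomials modulo $N_\Gamma$: one must check that after using the relations $u_i^{n_i}\equiv u_0^{\ell_0^{(i)}}\cdots u_{i-1}^{\ell_{i-1}^{(i)}}$ to reduce exponents, the surviving monomials land precisely in the $D$-sets and no further collapse occurs, i.e. that the claimed basis vectors are genuinely $\mathbb C$-independent in the quotient. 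The cleanest route is to avoid checking independence by hand and instead let the dimension count close the argument, so that the crux reduces to the purely combinatorial identity between $\sum |D_{\ell_{m-1}^{(m)}}|$-weighted products and the conductor $c(\Gamma)$, proved by the same recursion that defines the $D$-sets.
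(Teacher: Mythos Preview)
Your proposal is correct and follows essentially the same route as the paper: induction on $g$ via the free-semigroup gluing $\Gamma_g=n_g\Gamma_{g-1}+a_g\mathbb N$, the injection $\Phi_1$ of Theorem~\ref{thm:injective} to carry the inductive basis (multiplied by $u_g^{k_g}$, $0\le k_g\le n_g-1$) into the first $g-1$ slots, Lemma~\ref{lem:pierrettelemma4} to supply the $\vec e_g$-generators, and the dimension count $c(\Gamma_g)=n_g\,c(\Gamma_{g-1})+(n_g-1)(a_g-1)$ from Delorme together with Pinkham's $\dim_{\mathbb C}T^1=c(\Gamma)$ to close the argument; the paper's combinatorial core is exactly the identity $|D'_{\ell_{g-1}^{(g)}}|=a_g$ that you anticipate, proved by the recursion $|D'_{\ell_{t-1}^{(s)}}|=n_{t-1}|D'_{\ell_{t-2}^{(s)}}|+\ell_{t-1}^{(s)}|D'_{\ell_{t-2}^{(t-1)}}|$. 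One small correction to your reading of the statement: the bound $k_m\le n_m-2$ in cases (2)--(3) is inherited from the inductive hypothesis (it is the old case (4) for the smaller index $m$), while in those cases $k_g$ still ranges up to $n_g-1$; only the new $\vec e_g$-slot in case (4) acquires $k_g\le n_g-2$ from the column $\partial f_g/\partial u_g=n_gu_g^{n_g-1}$.
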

\begin{proof}
    We will proceed by induction on the number of generators of the numerical semigroup \(\Gamma=\langle a_0,\dots,a_g\rangle.\)
\medskip

    Assume first \(g=1.\) In this base case the claim is easy since the monomial curve is the plane curve with equation \(f_1(u_0,u_1)=u_1^{a_0}-u_0^{a_1}.\) Therefore, as \(f_1\) is quasihomogeneous it belongs to the Jacobian ideal \((\partial f_1/\partial u_0,\partial f_1/\partial u_1)=(u_0^{a_1-1},u_1^{a_0-1})\) and then
    \[
    T^1(C^\Gamma)=\frac{\mathbb{C}[u_0,u_1]}{(u_0^{a_1-1},u_1^{a_0-1})}.
    \]
    In this case we denote \(\ell_0^{(1)}=a_1\) and \(\)a \(\mathbb{C}\)--basis of \(T^1(C^\Gamma)\) is the set \(\Big \{u_0^iu_1^j\;\colon\; (i,j)\in E_{\ell_0^{(1)},n_1}\Big\}. \)
\medskip

    Now suppose by induction that the result is true for \(g\leq k\) and let us assume we are in the case \(k=g+1.\) Recall that, since \(\Gamma\) is a free semigroup, we may write \(\Gamma=n_{g+1}\Gamma_g+a_{g+1}\mathbb{N}.\) Also, by Delorme \cite[Proposition~10]{delormeglue} we have \(c(\Gamma)=n_{g+1}c(\Gamma_g)+(n_{g+1}-1)(a_{g+1}-1)\), which according to Pinkham \cite[Sect.~10]{Pinkham1} coincides with the dimension of \(T^1(C^\Gamma)\); observe that $n_{g+1} \neq 1$ since $\Gamma$ is free.
    \medskip
    
    Let us denote by \(\mathcal{B}_g\) the basis of the  \(\mathbb{C}\)-vector space \(\mathbb{C}[\Gamma_g]^g/N_g\) provided by induction hypothesis. Theorem \ref{thm:injective} yields the injective $\mathbb{C}$-linear map
    \[\begin{array}{ccc}
        \mathbb{C}[\Gamma_g]^g/N_g&\xrightarrow{\Phi}&\mathbb{C}[\Gamma_{g+1}]^{g+1}/N\\
        z&\mapsto&(z,0)
    \end{array}\]
    Then, \(\{u_{g+1}^{k}\Phi(z)\;\colon\;z\in\mathcal{B}_g,\;0\leq k\leq n_{g+1}-1\}\) is a set of \(\mathbb{C}\)--linearly independent non-zero elements of  \(\mathbb{C}[\Gamma_{g+1}]^{g+1}/N\) whose cardinal is \(n_{g+1}c(\Gamma_g).\) Moreover, by induction hypothesis those are precisely the set of vectors defined by parts \((1),(2)\) and \((3).\) Therefore, it remains to show that 
    $$
    (u_0^{k_0}u_1^{k_1}u_2^{k_2}\cdots u_g^{k_{g+1}}) \vec{e}_{g+1}
    $$
     for $(k_0,k_1,\ldots , k_{g})\in D_{\ell_{g}^{(g+1)}}$ with $k_g=0,\ldots , n_g-2$ are \((n_{g+1}-1)(a_{g+1}-1)\) non-zero elements which together with the previous vectors build a system of generators of \(\mathbb{C}[\Gamma_{g+1}]^{g+1}/N\).  
     \medskip
     
     By Lemma \ref{lem:pierrettelemma4} we have that \(u_0^{k_0}\cdots u_{g}^{k_{g}}u_{g+1}^{k_{g+1}}\) with \((k_0,\dots,k_{g})\in D'_{\ell_{g}^{(g+1)}}\) is a system of generators of \(\mathbb{C}[u_0,\dots,u_{g+1}]/(f_1,\dots,f_{g+1}).\) Observe that, as each \(f_i\) is homogeneous of degree \(n_ia_i\), our definition of \(D_{\ell_{g}^{(g+1)}}\) from \(D'_{\ell_{g}^{(g+1)}}\) eliminates the unique element of \(D'_{\ell_{g}^{(g+1)}}\) that goes to \(0\) after taking quotient with \(N.\)  Therefore, to conclude the proof we only need to show that the number of elements of \(D'_{\ell_{g}^{(g+1)}}\) is \(a_{g+1}\). To prove that, we recall the definition of the sets $ D_{\ell_{t-1}^{(s)}}'$, namely
     \begin{equation*}
    D_{\ell_{t-1}^{(s)}}'=\left\{\begin{array}{ll}
       I_{\ell_{t-2}^{(s)}}  & \text{if }\ell_{t-1}^{(s)} = 0. \\
       I_{\ell_{t-2}^{(s)}}\cup I_{l_{t-1}^{(s)}}  & \text{if }\ell_{t-1}^{(s)} \neq 0.
    \end{array}\right.
\end{equation*}
for \(s\geq 2\) and \(t=3,\dots,s\). From this definition it is easily seen that  
     \begin{equation}\label{eqn:aux1}
         \Big |D'_{\ell_{t-1}^{(s)}}\Big|=n_{t-1}\Big|D'_{\ell_{t-2}^{(s)}}\Big|+\ell_{t-1}^{(s)}\Big|D'_{\ell_{t-2}^{(t-1)}}\Big|.
     \end{equation}
    
    Recall also that \(\ell_0^{(1)}=a_1/e_1=a_1/(n_2\cdots n_{g+1})\) and then
     \[
|D'_{\ell_{1}^{(g+1)}}|=\ell_0^{(g+1)}n_1+\ell_1^{(g+1)}\ell_0^{(1)}=\frac{\ell_0^{(g+1)}a_0+\ell_1^{(g+1)}a_1}{n_2\cdots n_{g+1}}\quad\text{and}\quad |D'_{\ell_1^{(2)}}|=\frac{a_2}{e_2}=\frac{a_2}{n_3\cdots n_{g+1}}.
     \]
   
Recursively, we can use eqn. \eqref{eqn:aux1} to show 
\begin{equation}
    \label{eqn:cardinalD'}
    \Big|D'_{\ell_{t-2}^{(s)}}\Big|=\frac{\ell_0^{(s)}a_0+\cdots+\ell_{t-2}^{(s)}a_{t-2}}{n_{t-1}\cdots n_{g+1}}\quad\text{and}\quad \Big|D'_{\ell_{t-2}^{(t-1)}}\Big|=\frac{a_{t-1}}{e_{t-1}}=\frac{a_{t-1}}{n_{t-2}\cdots n_{g+1}}
\end{equation}
    Finally, applying the previous computations to the case \(s=g+1\) and \(t=g+1\) and the identity \(n_{g+1}a_{g+1}=\ell_0^{(g+1)}a_0+\cdots+\ell_{g-1}^{(g+1)}a_{g-1}+\ell_{g}^{(g+1)}a_g\) we obtain \(\Big|D'_{\ell_{g}^{(g+1)}}\Big|=a_{g+1}.\) In this way, the number of vectors of the form \((4)\) is \((n_{g+1}-1)(a_{g+1}-1)\), which is the desired conclusion.
\end{proof}

\begin{rem}
    (1)~Theorem \ref{thm:basisdeffree} is a generalization of \cite[Th\'eor\`eme~3]{pierette} in the sense that, if \(\Gamma\) is the semigroup of a plane branch, then our Theorem \ref{thm:basisdeffree} recovers  \cite[Th\'eor\`eme~3]{pierette}.\\
    (2)~ If we allow \(n_i=1\) for some \(i\in \{1,\ldots , g+1\}\), then observe that there is no loss of generality in the proof of Theorem \ref{thm:basisdeffree}. In that case the conductor in the iteration remains constant and the induction is trivial by Theorem \ref{thm:injective}.
\end{rem}

We illustrate Theorem \ref{thm:basisdeffree} by showing an explicit construction of a basis of the $\mathbb{C}$-vector space $\mathbb{C}[\Gamma_g]^g/N$.

\begin{ex}\label{ex:examplebasis}
    Set the sequence of positive integers $A=(18,27,21,32)$. It is easily seen that the sequence of $(n_1,n_2,n_3)$ associated to $S$ is $(2,3,3)$ and that the numerical semigroup $\Gamma_A=\langle A \rangle$ generated by $A$ is free, since 
    \begin{equation}\label{eq:descomp_unica_ejemplo_base_defor}
        \begin{array}{rcl}
        n_1\,a_1 =  2\cdot 27 & = & 3\cdot 18 =\ell_0^{(1)} a_0, \\[2mm]
        n_2\,a_2 =  3\cdot 21 & = & 2\cdot 18 + 1\cdot 27=\ell_0^{(2)} a_0 + \ell_1^{(2)} a_1\\[2mm]
        n_3\,a_3 =  3\cdot 32 & = & 3\cdot 18 + 0\cdot 27 + 2\cdot 21 =\ell_0^{(3)} a_0 + \ell_1^{(3)} a_1 + \ell_2^{(3)} a_2.\\
        \end{array}
    \end{equation}
    Now, let us describe a basis of the quotient $\mathbb C$-vector space $\mathbb{C}[\Gamma_A]^3/N$ taking into account both Theorem \ref{thm:basisdeffree} and eqns.~ \eqref{eq:descomp_unica_ejemplo_base_defor}. First we calculate the elements belonging to the set $E_{\ell_0^{(1)},n_1}$; this is
    $$
        E_{\ell_0^{1},n_1}=\Big \{(i,j)\in \mathbb{N}^2 \ : \ 0\leq i \leq 1 \ \text{ and } \  0\leq j \leq 0 \Big \} =\{(0,0),(1,0)\}. 
    $$
    After this base case, the following two steps are the computation of the elements in the sets $D_{\ell_{1}^{(2)}}$ and $D_{\ell_{2}^{(3)}}.$ But before that, we need to calculate the corresponding sets $I_{\bullet}$ defining them. We start computing the elements in the set $D_{\ell_{1}^{(2)}}$. In this particular case, as $\ell_{1}^{(2)}=1\neq 0,$ we have to calculate $I_{\ell_0^{(2)}},I_{\ell_1^{(2)}},$  $D_{\ell_1^{(2)}}'$ and $\textbf{h}(I_{\ell_1^{(2)}})$:
    \begin{equation*}
        \begin{array}{rl}
        I_{\ell_0^{(2)}}&=\Big\lbrace (i,j)\in \mathbb{N}^2 \ : \ 0\leq i \leq \ell_0^{(2)}-1=1 \text{ and } \  0\leq j\leq n_1-1=1\Big\rbrace\\[3mm] &=\{(0,0),(0,1),(1,0),(1,1)\}. \\[2mm]
        I_{\ell_1^{(2)}}&=\Big\lbrace (i,j)\in \mathbb{N}^2 \ : \  0\leq i - \ell_0^{(2)}\leq \ell_0^{(1)}-1 \text{ and } \ 0\leq j\leq \ell_1^{(2)}-1 \Big\rbrace\\[3mm] &=\{(2,0),(3,0),(4,0)\}.\\[3mm]
        D_{\ell_1^{(2)}}'&=I_{\ell_0^{(2)}}\cup I_{\ell_1^{(2)}} \quad \text{and}\quad \textbf{h}\big(I_{\ell_1^{(2)}}\big)=(4,0).
         
        \end{array}
    \end{equation*}
    This yields the set $D_{\ell_{1}^{(2)}}$, namely 
    $$
        D_{\ell_{1}^{(2)}}= \{(0,0),(0,1),(1,0),(1,1),(2,0),(3,0)\}.   
    $$
    To conclude, we obtain the set $D_{\ell_{2}^{(3)}}.$ In this case,  firstly, we have to compute $I_{\ell_0^{(3)}}$ and $D_{\ell_1^{(3)}}'$ since $\ell_1^{(3)}=0$ and after the sets $I_{\ell_1^{(3)}},I_{\ell_2^{(3)}},$ and $D_{\ell_2^{(3)}}',$ because $\ell_2^{(3)}\neq 0,$ and the vector $\textbf{h}(I_{\ell_2^{(3)}})$:
    \begin{equation*}
        \begin{array}{rl}
        I_{\ell_0^{(3)}}&=\Big\lbrace (i,j)\in \mathbb{N}^2 \ : \ 0\leq i \leq \ell_0^{(3)}-1=2 \text{ and } \  0\leq j\leq n_1-1=1\Big\rbrace\\[3mm] &=\{(0,0),(0,1),(1,0),(1,1),(2,0),(2,1)\}. \\[2mm]
        D_{\ell_1^{(3)}}'&=I_{\ell_0^{(3)}}.
        \\[3mm] 
        I_{\ell_1^{(3)}}&=\Bigg\lbrace \left(k_0,k_1,k_{2}\right)\in\mathbb{N}^3 \ : \ 0\leq k_{2}\leq n_{2}-1=2 \text{ and } \left(k_0, k_{1}\right)\in D_{\ell_{1}^{(3)}}'\Bigg\rbrace\\[5mm]
        &=\{(0,0,0),(0,0,1),(0,0,2),(0,1,0),(0,1,1),(0,1,2),(1,0,0),(1,0,1),\\ & \hspace{6.2mm}(1,0,2),(1,1,0),(1,1,1),(1,1,2),(2,0,0),(2,0,1),(2,0,2),(2,1,0),\\ & \hspace{6.2mm}(2,1,1),(2,1,2)\}. \\[2mm]
        I_{\ell_2^{(3)}}&=\Bigg\lbrace \left(k_0,k_1, k_{2}\right)\in\mathbb{N}^3 \ : \ 0\leq k_{2}\leq \ell_{2}^{(3)}-1=1 \text{ and }\left(k_0-\ell_0^{(3)}, k_{1}-\ell_{1}^{(3)}\right)\in D_{\ell_{1}^{(2)}}'\Bigg\rbrace\\[5mm]
        &=\{(3,0,0),(3,0,1),(3,1,0),(3,1,1),(4,0,0),(4,0,1),(4,1,0),(4,1,1),(5,0,0),\\ & \hspace{6.2mm}(5,0,1),(6,0,0),(6,0,1),(7,0,0),(7,0,1)\}.\\[2mm]
        D_{\ell_2^{(3)}}'&=I_{\ell_1^{(3)}}\cup I_{\ell_2^{(3)}} \quad \text{and} \quad \textbf{h}\big(I_{\ell_2^{(3)}}\big)=(7,0,1).\\[3mm]
        
        \end{array}
    \end{equation*}
    As a result, we get 
     \begin{equation*}
        \begin{array}{rl}
        D_{\ell_{2}^{(3)}}&=\{(0,0,0),(0,0,1),(0,0,2),(0,1,0),(0,1,1),(0,1,2),(1,0,0),(1,0,1),\\ 
        & \hspace{6.2mm} (1,0,2),(1,1,0),(1,1,1),(1,1,2),(2,0,0),(2,0,1),(2,0,2),(2,1,0),\\ 
        & \hspace{6.2mm}(2,1,1),(2,1,2),(3,0,0),(3,0,1),(3,1,0),(3,1,1),(4,0,0),(4,0,1),\\ 
        & \hspace{6.2mm}(4,1,0),(4,1,1),(5,0,0),(5,0,1),(6,0,0),(6,0,1),(7,0,0)\},\\
        \end{array}
    \end{equation*}
    and following Theorem \ref{thm:basisdeffree}, we obtain a basis of the quotient $\mathbb C$-vector space $\mathbb{C}[\Gamma_A]^3/N$. 
\end{ex}


\section{On the dimension of the moduli space of a monomial curve associated to a free semigroup} \label{sec:dimensionmoduli}

Let $B=\mathbb{C}[\Gamma]$ be the monomial ring associated to a numerical semigroup \(\Gamma\) endowed with the usual grading given by $\Gamma$. Recall that the affine monomial curve \(C:=\operatorname{Spec} B\) has a defining ideal \(I=(f_1,\dots,f_k)\) where \(f_1,\dots,f_k\) are homogeneous elements, see e.g.~Herzog~\cite{herzog}. Therefore, there is a natural action of the multiplicative group \(\mathbb{G}_m\) of units. As \(C\) is a curve with isolated singularity, in virtue of Grauert \cite{grauert} there is a miniversal deformation whose tangent space \(T^1_B\) possesses a natural \(\mathbb{Z}\)--grading (see also Greuel et al.~\cite{Greuelbook}). (Recall here that $T^1_B$ can be identified to the Zariski tangent space to the base of the miniiversal deformation of $(C^{\Gamma},\mathbf{0})$ thanks to the fact that the Kodaira-Spencer map is a bijection in this case, see e.g.~\cite[Lemma~1.20]{Greuelbook}).

\subsection{An account of the complete intersection case} We will focus on the case of a complete intersection numerical semigroup \(\Gamma=\langle a_0,\dots,a_g\rangle\). In this case, as mentioned in Section \ref{sec:miversalCI}, Tjurina's theorem \cite{Tjurina} shows that the tangent space of the deformation is 
\[
T^1_B=\frac{\mathbb{C}[u_0,\dots,u_g]^g}{\left(\frac{\partial f_i}{\partial u_j}\right)_{i,j}\mathbb{C}[u_0,\dots,u_g]^{g+1}+(f_1,\dots,f_g)\mathbb{C}[u_0,\dots,u_g]^g}.
\]

Consider a basis \(s_1\dots,s_\tau\in\mathbb{C}[u_0,\dots,u_g]^{g}\) of \(T^1_B\), where \(s_i=(s^1_i,\dots,s^g_i)\) for $i=1,\ldots , \tau$. Then the miniversal deformation of \(C\) can be described as follows: For $k>0$, we define
\begin{equation}\label{eqn:eqdeformation}
    \begin{array}{cc}
	F_1(\mathbf{u},\mathbf{w})=&f_1(\mathbf{u})+\sum_{j=1}^{\tau}w_js^{1}_j(\mathbf{u}),\\
	\vdots&\vdots\\
F_k(\mathbf{u},\mathbf{w})=&f_k(\mathbf{u})+\sum_{j=1}^{\tau}w_js^{k}_j(\mathbf{u})
\end{array}
\end{equation}

and let \((\mathcal{X},\mathbf{0}):=V(F_1,\dots,F_k)\subset(\mathbb{C}^{g+1}\times\mathbb{C}^t,\mathbf{0})\) be the zero set of \(F_1,\dots,F_k\), then the deformation defined by  \((C^{\Gamma},\mathbf{0})\xrightarrow{i}(\mathcal{X},\mathbf{0})\xrightarrow{\phi}(\mathbb{C}^\tau,\mathbf{0})\) is the miniversal deformation of \((C^\Gamma,\mathbf{0})\), where \(i\) is induced by the inclusion and \(\phi\) by the natural projection. In fact, if one chooses \(\deg(w_j)=-\deg(s_j)\), then we endow the algebra \(\mathbb{C}[u_0,\dots,u_g,w_1,\dots,w_\tau]\) with the unique grading for which \(\deg(u_i)=a_i\) and the \(F_i\) are homogeneous with \(\deg(F_i)=\deg(f_i)\). Under this grading, we obtain a partition of the base space \(\mathbb{C}^\tau\) into two parts. Define the sets
\begin{align*}
    P_{+}:=&\{j\in\{1,\dots,\tau\}:\;\deg(w_j)<0\}\\
    P_{-}:=&\{j\in\{1,\dots,\tau\}:\;\deg(w_j)>0\}.
\end{align*}
\begin{rem}
    It is worth noting that---when dealing with a deformation---we have the parameter space with coordinates \(w_1,\dots,w_\tau\) on the one hand, and the basis of \(T^1\) given by Theorem \ref{thm:basisdeffree} on the other hand. Therefore, from the choice of the grading, a parameter with negative weight provides a monomial with positive grading in \(T^1.\) This is the reason that motivates the definition of \(P_{+}\) and \(P_{-}\) as we will use them to refer to positive/negative weight deformations. Observe that the way we have defined these sets is then the opposite of the one given by Teissier in \cite{teissierappen}.
\end{rem}

Denote by $\tau_{+}(\Gamma):=\tau_{+}:=|P_{+}|$ and $\tau_{-}(\Gamma):=\tau_{-}:=|P_{-}|$. Since there are no \(w_j\) of degree zero, we have the equality \(\tau=\tau_{+}+\tau_{-}\). Moreover, there is a natural action of the group \(\mathbb{C}^{\ast}\) over \((\mathcal{X},\mathbf{0})\) which is compatible with the previous construction and that induces the natural action on \(\phi^{-1}(0)\cong C^{\Gamma}\).
\medskip

At this point, according to Pinkham \cite{Pinkham}, in order to study the moduli space associated to \(\Gamma\) we have to focus on the negative part of the deformation, i.e. \(P_{-}\). To this purpose we need first to consider the base change in the deformation induced by the inclusion map defined as \(V_-:=(\mathbb{C}^{\tau_-}\times\{\mathbf{0}\},0)\hookrightarrow (\mathbb{C}^{\tau},\mathbf{0})\), on account of the diagram
\[
	\xymatrix{
		(C^{\Gamma},\mathbf{0})\ar[dr]\ar[r]&(\mathcal{X},\mathbf{0})\ar[rr]& &(\mathbb{C}^{\tau},\mathbf{0})  \\
		& (\mathcal{X}_\Gamma,\mathbf{0}):=(\mathcal{X},\mathbf{0})\times_{(\mathbb{C}^{\tau},\mathbf{0})}(V_-,\mathbf{0})\ar[rr]\ar[u]& &(V_-,\mathbf{0}).\ar[u]}
	\]
 
 Let us denote by \(G_\Gamma: \mathcal{X}_\Gamma\rightarrow V_-\) the deformation induced by this base change. Observe that this deformation can be described in terms of the eqns.~\eqref{eqn:eqdeformation} by making \(w_j=0\) for all \(j\in P_+.\) This is now a negatively graded deformation. Following Pinkham, we must projectivize the fibers of \(G_\Gamma\) without projectivizing the base space \(V_-.\) This can be done by replacing \(s_j\) with \(s_j(u_0,\dots,u_g)X_{g+1}^{-\deg s_j}.\) Observe that we have the inclusion \(\overline{\mathcal{X}}_\Gamma\subset \mathbb{P}^{g+2}\times V_-\) where the ring \(\mathbb{C}[u_0,\dots,u_g,X_{g+1}]\) has \(\deg u_i=a_i\) and \(\deg X_{g+1}=1.\) According to Pinkham \cite[Proposition 13.4, Remark 10.6]{Pinkham1} the morphism
$$
\pi: \overline{\mathcal{X}}_\Gamma\longrightarrow V_{-}
$$
is flat and proper, and has fibres which are reduced projective curves, and all those fibres which are above a given $\mathbb{G}_m$-orbit of $V_{-}$ are isomorphic.
\medskip

This leads Pinkham \cite[Theorem 13.9]{Pinkham1} to prove the following. (Our formulation sticks to Buchweitz \cite[Theorem 3.3.4]{Buchweitz}).

\begin{theorem}[Pinkham]\label{thm:Pinkham}
Let $\mathcal{M}_{g,1}$ be the coarse moduli space of smooth projective curves $C$ of genus $g$ with a section i.e. of pointed compact Riemann surfaces of genus $g$. Let $\Gamma$ be a numerical semigroup and set the subscheme of $\mathcal{M}_{g,1}$ parameterizing pairs
$$
W_{\Gamma} = \Big  \{ (X_0,p) : X_0 \ \mbox{is a smooth projective curve of genus} \ g, \ \mbox{and} \ p\in X_0 \ \mbox{with} \  \Gamma_p=\Gamma  \Big \},
$$
where $\Gamma_p$ is the Weierstrass semigroup at the point $p$. Moreover, write $V^{-}_s$ for the open subset of $V^{-}$ given by the points $u\in V'$ such that the fibre of $\overline{\mathcal{X}}_\Gamma \to V_{-}$ above $u$ is smooth. This is $\mathbb{G}_m$ equivariant, and so there exists a bijection between $W_\Gamma$ and the orbit space $V^{-}_s / \mathbb{G}_m$.
\end{theorem}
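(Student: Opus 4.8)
The plan is to follow Pinkham's original argument \cite{Pinkham1}, in the streamlined form of Buchweitz \cite{Buchweitz}, since all the geometric input has already been assembled: the family $\pi\colon\overline{\mathcal{X}}_\Gamma\to V_-$ is flat and proper with reduced projective curve fibres, and any two fibres lying over a common $\mathbb{G}_m$-orbit of $V_-$ are isomorphic as abstract curves. First I would attach to each $u\in V_-^s$ the pointed curve $(X_u,p_u)$, where $X_u:=\pi^{-1}(u)$ is a smooth projective curve (of genus equal to the number of gaps of $\Gamma$, this being the constant arithmetic genus of the flat family $\pi$) and $p_u$ is the intersection of $X_u$ with the hyperplane $\{X_{g+1}=0\}$; because the deformation is negatively graded this intersection is a single point, which is a smooth point of $X_u$ when $u\in V_-^s$. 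Since fibres over a common orbit are isomorphic as pointed curves, $u\mapsto(X_u,p_u)$ descends to a well-defined map $\Psi\colon V_-^s/\mathbb{G}_m\to\mathcal{M}_{g,1}$; the remaining task is to prove that $\Psi$ is a bijection onto $W_\Gamma$.

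The key step is the identification of the Weierstrass semigroup at $p_u$. Fix $u\in V_-^s$ and let $R_u$ be the coordinate ring of the affine curve $X_u\setminus\{p_u\}$, that is, $\mathbb{C}[u_0,\dots,u_g]$ modulo the deformed equations $f_1,\dots,f_g$ evaluated at the parameter $u$. Negative gradedness makes these deformed equations homogeneous of degrees $\deg f_i$, so $R_u$ is a graded ring; by constancy of the Hilbert function in a flat family of graded modules, $R_u$ has the same Hilbert series as $\mathbb{C}[\Gamma]$. Smoothness of $X_u$ makes $R_u$ integrally closed with $\operatorname{Spec}R_u=X_u\setminus\{p_u\}$, whence $R_u=\bigcup_{n\ge0}H^0(X_u,\mathcal{O}_{X_u}(np_u))$; grading by order of pole at $p_u$ then identifies the set of attained pole orders with the set of degrees occurring in $R_u$, namely $\Gamma$. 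Hence the Weierstrass semigroup at $p_u$ equals $\Gamma$, so $\Psi([u])\in W_\Gamma$.

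For surjectivity I would run the construction in reverse. Given $(X_0,p)\in W_\Gamma$, set $R:=\bigcup_{n\ge0}H^0(X_0,\mathcal{O}_{X_0}(np))$, filtered by pole order at $p$; since $\dim H^0(X_0,\mathcal{O}(np))-\dim H^0(X_0,\mathcal{O}((n-1)p))$ equals $1$ exactly when $n\in\Gamma$, the associated graded ring is $\operatorname{gr}R\cong\mathbb{C}[\Gamma]$. The Rees algebra of this filtration then furnishes a $\mathbb{G}_m$-equivariant, negatively graded one-parameter deformation of $(C^\Gamma,\mathbf{0})$ with general fibre $\operatorname{Spec}R=X_0\setminus\{p\}$; by the $\mathbb{G}_m$-equivariant versality of the miniversal deformation (whose negative part has base $V_-$) this family is induced from $G_\Gamma\colon\mathcal{X}_\Gamma\to V_-$ by a $\mathbb{G}_m$-equivariant morphism $\mathbb{A}^1\to V_-$; its generic value $u$ lies in $V_-^s$, since the general fibre is the smooth curve $X_0$, and satisfies $(X_u,p_u)\cong(X_0,p)$, so $\Psi([u])=(X_0,p)$. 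For injectivity, an isomorphism of pointed curves $(X_u,p_u)\xrightarrow{\ \sim\ }(X_{u'},p_{u'})$ carries $R_u$ onto $R_{u'}$ respecting the pole filtrations, hence is an isomorphism of graded rings; transporting it through the graded presentation and absorbing the $\mathbb{C}^{\ast}$-rescaling $u_i\mapsto\lambda^{a_i}u_i$, I would invoke the uniqueness part of the miniversal property---the Kodaira--Spencer map being bijective, as recalled just before this subsection---to conclude that $u'$ lies in the $\mathbb{G}_m$-orbit of $u$.

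The step I expect to be the main obstacle is exactly this rigidity assertion, ``isomorphic pointed curves'' $\Longleftrightarrow$ ``same $\mathbb{G}_m$-orbit'': one must verify that the choices made in reconstructing the embedded projective family $\overline{\mathcal{X}}_\Gamma$ from the intrinsic datum $(X_0,p)$---the weight-one homogenising coordinate $X_{g+1}$ and the coordinates $u_0,\dots,u_g$ adapted to the pole filtration at $p$---are entirely accounted for by the $\mathbb{G}_m$-action together with the infinitesimal automorphisms of the miniversal base, which vanish here. This, together with the flatness and properness of $\pi$ over \emph{all} of $V_-$ (so that smoothness is an open and $\mathbb{G}_m$-stable condition, making $V_-^s/\mathbb{G}_m$ meaningful), is precisely what Pinkham proves in \cite[Proposition 13.4, Theorem 13.9]{Pinkham1}; I would cite these rather than reprove them.
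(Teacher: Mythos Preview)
The paper does not prove this theorem at all: it is stated as a result of Pinkham, with the attribution ``This leads Pinkham \cite[Theorem 13.9]{Pinkham1} to prove the following. (Our formulation sticks to Buchweitz \cite[Theorem 3.3.4]{Buchweitz}).'' and no further argument. Your proposal is a faithful and essentially correct sketch of Pinkham's original proof---the Rees-algebra degeneration for surjectivity, the Hilbert-function/pole-order identification of the Weierstrass semigroup, and the appeal to $\mathbb{G}_m$-equivariant versality for injectivity---so you are supplying strictly more than the paper does, and along the same lines it cites.
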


As complete intersections can be deformed without obstructions, the following corollary is an easy consequence of Theorem \ref{thm:Pinkham} and Deligne-Greuel's formula \cite{Deligne,Greuel82Del}:
\begin{corollary}\label{cor:dimModuli1}
    Let \(\Gamma\) be a complete intersection numerical semigroup. Then, 
    \[\dim W_\Gamma=\tau_{-}=c(\Gamma)-\tau_{+}.\]
\end{corollary}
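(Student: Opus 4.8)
The plan is to read both equalities off from Pinkham's Theorem~\ref{thm:Pinkham} together with the fact that a complete intersection is unobstructed. The arithmetic identity $\tau_{-}=c(\Gamma)-\tau_{+}$ comes essentially for free. Since $\Gamma$ is a complete intersection numerical semigroup, the computation of Pinkham recalled in Section~\ref{sec:miversalCI} gives $\dim_{\mathbb{C}}T^1_B=c(\Gamma)$; alternatively, the Deligne-Greuel formula \cite{Deligne,Greuel82Del} applied to the irreducible, quasihomogeneous complete intersection curve singularity $(C^{\Gamma},\mathbf{0})$ yields $\mu=2\delta$ with $\delta$ the number of gaps of $\Gamma$, and since a complete intersection semigroup is symmetric (so $c(\Gamma)=2\delta$) and the singularity is quasihomogeneous (so $\mu=\tau$), once more $\tau=c(\Gamma)$. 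As observed after \eqref{eqn:eqdeformation} there are no deformation parameters of weight $0$, hence $\tau=\tau_{+}+\tau_{-}$ and therefore $\tau_{-}=\tau-\tau_{+}=c(\Gamma)-\tau_{+}$.

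The geometric identity $\dim W_{\Gamma}=\tau_{-}$ is then obtained as follows. Because $(C^{\Gamma},\mathbf{0})$ is a complete intersection it is unobstructed, so the base of its miniversal deformation is the smooth germ $(\mathbb{C}^{\tau},\mathbf{0})$; in particular the linear subspace $V_{-}=(\mathbb{C}^{\tau_{-}}\times\{\mathbf{0}\},\mathbf{0})$ is a smooth irreducible germ of dimension $\tau_{-}$. By Theorem~\ref{thm:Pinkham} there is a bijection $W_{\Gamma}\cong V^{-}_{s}/\mathbb{G}_m$, where $V^{-}_{s}\subseteq V_{-}$ is the open locus over which the projectivized fibre of $\overline{\mathcal{X}}_{\Gamma}\to V_{-}$ is a smooth curve; granting that $V^{-}_{s}$ is non-empty, it is open and dense in $V_{-}$, and transporting the dimension through this bijection gives $\dim W_{\Gamma}=\tau_{-}$.

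The only step needing real work---and the one I expect to be the main obstacle---is the non-emptiness of $V^{-}_{s}$, equivalently the fact that a complete intersection semigroup is actually realised as the Weierstrass semigroup of some smooth projective curve. The natural route is to construct a negatively graded smoothing of the affine cone $C^{\Gamma}$ by induction on the number of generators, exploiting the Thom-Sebastiani-type decomposition $\mathbb{C}[\Gamma]=(\mathbb{C}[\Gamma_1]\otimes\mathbb{C}[\Gamma_2])/(\rho)$ and the block structure of the Jacobian matrix recalled in Section~\ref{sec:miversalCI}: the base case is the plane curve $f_1=u_1^{a_0}-u_0^{a_1}$, for which the family $u_1^{a_0}-u_0^{a_1}+\sum_{(i,j)\in E_{\ell_0^{(1)},n_1}}w_{ij}\,u_0^{i}u_1^{j}$ has smooth generic member with all parameters of positive weight, and the inductive step glues two such smoothings compatibly with the gradings. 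Pulling the resulting negatively graded smoothing back through the base change by $V_{-}$ shows that the generic fibre of $\overline{\mathcal{X}}_{\Gamma}\to V_{-}$ is smooth, that is $V^{-}_{s}\neq\varnothing$, which completes the argument.
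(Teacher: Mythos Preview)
Your argument is essentially the paper's own: the paper deduces the corollary in one sentence from unobstructedness of complete intersections, Pinkham's Theorem~\ref{thm:Pinkham}, and the Deligne--Greuel formula, which is exactly the content of your first two paragraphs. Your third paragraph, addressing the non-emptiness of $V_s^{-}$, goes beyond what the paper supplies (the paper simply takes negative-weight smoothability for granted); your inductive sketch is in the right spirit, but note that the base-case claim ``all parameters of positive weight'' is inaccurate---for $(i,j)\in E_{\ell_0^{(1)},n_1}$ the weight $n_1a_1-ia_0-ja_1$ can be negative---and in any case a single negative-degree smoothing (e.g.\ adding a constant term) suffices.
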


\subsection{On the recursive computation of the dimension of the moduli space of a free semigroup} Starting with the recursive presentation of a free semigroup, our aim is to compute \(\tau_+\), and then the dimension of the moduli space, in a recursive way. We will focus on the particular case of free numerical semigroups \(\Gamma_m=\langle a_1,\dots,a_m\rangle\). We will provide sharp upper and lower bounds for the dimension of the moduli space of \(\Gamma_m\) in terms of the dimension of the moduli space of \(\Gamma_{m-1}.\)
\medskip

Before continuing with the procedure to compute the dimension of the moduli space of a free semigroup, we need the following technical result.

\begin{proposition}\label{prop:Proposition1}
Let $\langle a_0,a_1, \ldots , a_g \rangle$ be a free numerical semigroup and 
$$
A_{s,k}=\Big \{(i,j)\in E_{\ell_0^{(1)},n_1} : ia_0+ja_1+ka_s > n_1 a_1 \ \mathrm{and} \ a_s<n_1a_1\Big\},
$$ for $2\leq s\leq g$ and $1\leq k\leq n_s-1$. Set $b_{s,k}:=|A_{s,k}|$.  Then
$$
b_{s,k}=\tau^{+}_{\langle n_1,\ell_0^{(1)} \rangle} + \Big \lfloor \frac{ka_s}{e_1} \Big \rfloor - \sigma_{1,k}(a_s)-\gamma_{1,k}(a_s)+1,
$$
where
\begin{equation*}
    \sigma_{1,k}(t)=\left\{\begin{array}{ll}
       0,  & \text{if } \Big \lfloor \frac{kt}{e_1} \Big \rfloor < \ell_0^{(1)}, \\[10pt]
       1,  & \text{otherwise }.
    \end{array}\right.
\end{equation*}
and
\begin{equation*}
    \gamma_{1,k}(t)=\left\{\begin{array}{ll}
       0,  & \text{if } \Big \lfloor \frac{kt}{e_1} \Big \rfloor < n_1- \Big \lfloor \frac{n_1}{\ell_0^{(1)}} \Big \rfloor\ell_0^{(1)}, \\[10pt]
       \Big \lfloor \frac{n_1}{\ell_0^{(1)}} \Big \rfloor +1, & \text{if } \Big \lfloor \frac{kt}{e_1} \Big \rfloor \geq n_1,\\[10pt]
       \Big \lfloor \frac{kt}{e_1} \Big \rfloor -n_1+\Big \lfloor \frac{n_1}{\ell_0^{(1)}}\Big \rfloor\ell_0^{(1)},  & \text{if }  n_1-\Big \lfloor \frac{n_1}{\ell_0^{(1)}} \Big \rfloor \ell_0^{(1)} \leq \Big \lfloor\frac{kt}{e_1} \Big \rfloor  \leq n_1.
    \end{array}\right.
\end{equation*}
\end{proposition}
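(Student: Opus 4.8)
The plan is to reduce the computation of $b_{s,k}$ to a one-variable counting problem about the set $E_{\ell_0^{(1)},n_1}$ and then match the answer against the known formula for $\tau^+_{\langle n_1,\ell_0^{(1)}\rangle}$. Recall that $E_{\ell_0^{(1)},n_1}$ is (for $n_1>1$) the rectangle $\{(i,j):0\le i\le \ell_0^{(1)}-2,\ 0\le j\le n_1-2\}$; geometrically, $A_{s,k}$ is the subset of that rectangle cut out by the half-plane $ia_0+ja_1 > n_1a_1-ka_s$, under the standing assumption $a_s<n_1a_1$ (so the right-hand side is positive and the cut is nontrivial). First I would divide the inequality $ia_0+ja_1+ka_s>n_1a_1$ by $e_1:=\gcd(a_0,a_1)=n_2\cdots n_{g+1}$: writing $a_0=n_1 e_1$ and $a_1=\ell_0^{(1)}e_1$ (the latter is exactly the identity $\ell_0^{(1)}=a_1/e_1$ used in the proof of Theorem~\ref{thm:basisdeffree}), the condition becomes $n_1 i+\ell_0^{(1)}j > n_1\ell_0^{(1)} - \lceil ka_s/e_1\rceil$ or, after the usual care with the fractional part, a condition of the form $n_1 i+\ell_0^{(1)}j \ge n_1\ell_0^{(1)} - m$ where $m=\lfloor ka_s/e_1\rfloor$ (the precise boundary term is what the correction functions $\sigma_{1,k}$ and $\gamma_{1,k}$ will bookkeep). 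Thus $b_{s,k}$ counts lattice points $(i,j)$ in the rectangle $[0,\ell_0^{(1)}-2]\times[0,n_1-2]$ with $n_1 i+\ell_0^{(1)}j \ge n_1\ell_0^{(1)}-m$.

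The key step is to recognize the right-hand count as a shifted/complemented version of $\tau^+_{\langle n_1,\ell_0^{(1)}\rangle}$. For the plane curve semigroup $\langle n_1,\ell_0^{(1)}\rangle$ the positive part of $T^1$ has a well-known description: $\tau^+$ equals the number of lattice points $(i,j)$ with $0\le i\le \ell_0^{(1)}-2$, $0\le j\le n_1-2$ and $n_1 i+\ell_0^{(1)}j > $ (or $\geq$) a certain threshold related to $n_1\ell_0^{(1)}-n_1-\ell_0^{(1)}$; this is exactly the content invoked via \cite[Theoreme~8]{pierette} and alluded to in the statement that "Theorem \ref{thm:dimensionpositive} recovers \cite[Theoreme 8]{pierette}". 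I would set up a bijection (translation by $(-1,-1)$ on the lattice, together with the reflection $(i,j)\mapsto(\ell_0^{(1)}-2-i,\,n_1-2-j)$ which swaps the half-plane $n_1i+\ell_0^{(1)}j\ge c$ with $n_1i+\ell_0^{(1)}j\le n_1\ell_0^{(1)}-2n_1-2\ell_0^{(1)}-c'$) carrying the set counted by $\tau^+_{\langle n_1,\ell_0^{(1)}\rangle}$ onto the set counted by $b_{s,k}$, up to a controlled number of boundary points. The count of those boundary points — the lattice points of the rectangle lying exactly on or just off the line $n_1i+\ell_0^{(1)}j = n_1\ell_0^{(1)}-m$, of which there are finitely many and whose number depends on the size of $m=\lfloor ka_s/e_1\rfloor$ relative to $\ell_0^{(1)}$ and to $n_1-\lfloor n_1/\ell_0^{(1)}\rfloor\ell_0^{(1)}$ — is exactly what produces the terms $-\sigma_{1,k}(a_s)$, $-\gamma_{1,k}(a_s)$ and $+1$ in the formula.

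The main obstacle will be this last boundary bookkeeping: counting precisely how many lattice points of the rectangle $[0,\ell_0^{(1)}-2]\times[0,n_1-2]$ sit on the line $n_1i+\ell_0^{(1)}j = n_1\ell_0^{(1)}-m$ (a gcd-$1$ line, so at most one per "column" in the relevant range) and how the rectangle's truncation at $i=\ell_0^{(1)}-2$ and $j=n_1-2$ clips that line. The three-case split in the definition of $\gamma_{1,k}$ — according to whether $\lfloor ka_s/e_1\rfloor$ is below $n_1-\lfloor n_1/\ell_0^{(1)}\rfloor\ell_0^{(1)}$, between that and $n_1$, or at least $n_1$ — corresponds exactly to whether the line misses the top edge of the rectangle, clips it partially, or has fully passed it; I would verify each case by an explicit but short lattice-point count, using $\gcd(n_1,\ell_0^{(1)})=1$ (which holds by the freeness/coprimality hypotheses and by $a_1/e_1=\ell_0^{(1)}$, $a_0/e_1=n_1$) to guarantee uniqueness of the boundary point in each relevant residue class. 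The function $\sigma_{1,k}$ similarly records whether the line has crossed the right edge $i=\ell_0^{(1)}-1$. Once these three (resp.\ two) cases are checked, summing the contributions and comparing with the reflection-bijection count yields the claimed identity.
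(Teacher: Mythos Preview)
Your reduction to the lattice problem in the $(n_1,\ell_0^{(1)})$-grid is exactly right: dividing by $e_1$ turns the condition into $n_1 i+\ell_0^{(1)} j>n_1\ell_0^{(1)}-m$ with $m=\lfloor ka_s/e_1\rfloor$, and your reading of $\sigma_{1,k}$ and $\gamma_{1,k}$ as recording which edges of the rectangle the threshold line has crossed is also the paper's interpretation. Where you diverge is in the ``key step'': you propose a reflection/translation bijection $(i,j)\mapsto(\ell_0^{(1)}-2-i,\,n_1-2-j)$ to relate $b_{s,k}$ to $\tau^{+}_{\langle n_1,\ell_0^{(1)}\rangle}$. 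The paper does something much simpler, and your reflection is in fact a detour that does not line up with the formula.

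The paper's decomposition is the trivial one: the set $\{(i,j)\in E_{\ell_0^{(1)},n_1}:ia_0+ja_1>n_1a_1\}$ is \emph{literally} a subset of $A_{s,k}$ (adding $ka_s>0$ only helps), and its cardinality is by definition $\tau^{+}_{\langle n_1,\ell_0^{(1)}\rangle}$. So the $\tau^{+}$ term is there for free, no bijection needed. What remains is to count the points of $E_{\ell_0^{(1)},n_1}$ in the strip $n_1\ell_0^{(1)}-m\le n_1 i+\ell_0^{(1)} j<n_1\ell_0^{(1)}$. For this the paper passes to the enlarged rectangle $B=[0,\ell_0^{(1)}-1]\times[0,n_1-1]$, where (since $\gcd(n_1,\ell_0^{(1)})=1$) each line $n_1 i+\ell_0^{(1)} j=n_1\ell_0^{(1)}-\varepsilon$ with $0\le\varepsilon\le n_1\ell_0^{(1)}-1$ contains \emph{exactly one} lattice point of $B$; this gives the raw count $\lfloor ka_s/e_1\rfloor+1$. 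The corrections $\sigma_{1,k}$ and $\gamma_{1,k}$ then simply subtract those $\varepsilon$-values whose unique point of $B$ lands on the forbidden boundary $i=\ell_0^{(1)}-1$ or $j=n_1-1$ (i.e.\ in $B\setminus E_{\ell_0^{(1)},n_1}$). Your reflection, by contrast, would relate $b_{s,k}$ to a count \emph{below} a reflected threshold, hence to something like $|E|-\tau^{+}$ rather than $\tau^{+}$; reconciling that with the stated formula would require undoing the reflection anyway. I would drop the reflection entirely and argue via the inclusion-plus-strip decomposition; the boundary bookkeeping you outline for $\sigma_{1,k}$ and $\gamma_{1,k}$ then goes through essentially as you describe.
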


\begin{proof}
First of all, observe that we have a partition of \(E_{\ell^{(1)}_0,n_1}\) as 
\[E_{\ell^{(1)}_0,n_1}=\left\{(i,j)\in E_{\ell^{(1)}_0,n_1} \ \colon \   ia_0+ja_1>n_1a_1 
\right\}\sqcup\left\{(i,j)\in E_{\ell^{(1)}_0,n_1} \ \colon \   ia_0+ja_1<n_1a_1 
\right\}.
\]

As by definition, \(\Big \{(i,j)\in E_{\ell^{(1)}_0,n_1} \ \colon \   ia_0+ja_1>n_1a_1 \Big \}\subset A_{s,k}\) then at least there are  $\tau^{+}_{\langle n_1,\ell_0^{(1)} \rangle} $ points in \(A_{s,k}.\)
\medskip

Set the index $s\in \{2,3,\ldots , g\}$ such that $a_s<n_1a_1$ and $1\leq k \leq n_s-1$. Now, assume $(i,j)\in E_{\ell_0^{(1)},n_1}$ such that $ia_0+ja_1<n_1a_1$; we want to see how many of those points satisfy $ia_0 + j a_1+k a_2>n_1a_1$ for some \(k>0\). Since $ia_0+ja_1<n_1a_1$, we have that $ia_0+ja_1=n_1a_1-\varepsilon e_1$ if and only if $i n_1 + j\ell_0^{(1)}=n_1\ell_0^{(1)}-\varepsilon$. We know that, in 
$$
B=\{(i,j) : 0\leq i \leq \ell_0^{(1)}-1, \ 0 \leq j \leq n_1 -1\},
$$ 
the line $i n_1 + j \ell_0^{(1)}=n_1\ell_0^{(1)}-\varepsilon$ contains a unique point. Assume for a moment that $(i,j)\in B$, then
$$
ia_0+ja_1+ka_2-n_1a_1 > 0 \ \Longleftrightarrow \ n_1a_1 - \varepsilon e_1 + ka_2 - n_1a_1>0,
$$

which is equivalent to $0\leq \varepsilon \leq \Big \lfloor \frac{k a_2}{e_1} \Big \rfloor$. This means that there are, at most, 
$$
\Big \lfloor \frac{k a_s}{e_1} \Big \rfloor +1
$$
points such that $(i,j)\in E_{\ell_0^{(1)},n_1}$ with $ia_0+ja_1<n_1a_1$ and $ia_0 + j a_1+k a_2>n_1a_1$ for some \(k>0\). At this moment, the two following situations may appear:
\medskip

\noindent Case 1: $\varepsilon =n_1+j\ell_0^{(1)}$ with $0\leq j \leq n_1-1$ and $(\ell_0^{(1)})n_1+j\ell_0^{(1)}=n_1\ell_0^{(1)}-\varepsilon$.
\medskip

\noindent Case 2: $\varepsilon =\ell_0^{(1)}-in_1$ with $in_1 + n_1\ell_0^{(1)}-\ell_0^{(1)}=n_1\ell_0^{(1)}-\varepsilon$.
\medskip

Without loss of generality we can assume that \(a_0>a_1\). Since $\langle a_0, a_1, \ldots , a_g \rangle$ is a free semigroup, we have $n_1=a_0/n_1$ and $\ell_0^{(1)}=a_1/e_1$, which implies $n_1>\ell_0^{(1)}$. Thus, in Case 2, as $\varepsilon \geq 0$, the only possibility is $i=0$. So, under the hypothesis of $\Big \lfloor \frac{k a_s}{e_1}\Big \rfloor < \ell_{0}^{(1)}$, there will be no points satisfying the conditions of Case 2, and under the hypothesis of $\Big \lfloor \frac{k \delta_s}{e_1}\Big \rfloor \geq  \ell_{0}^{(1)}$, there will be only one point satisfying the conditions of Case 2. This justifies the definition of $\sigma_{1,k}(a_s)$. On the other hand, the points satisfying the conditions of Case 1 can be studied through the function $\gamma_{1,k}(a_s)$, so that we obtain 
$$
b_{s,k}=\tau^{+}_{\langle n_1,\ell_0^{(1)} \rangle} + \Big \lfloor \frac{ka_s}{e_1} \Big \rfloor +1 - \sigma_{1,k}(a_s)-\gamma_{1,k}(a_s),
$$
as wished.
\end{proof}

Now, we can proceed with the main result of this section which relates \(\tau_m^+\) and \(\tau_{m-1}^{+}\). If \(\Gamma=\langle a_0,\dots, a_g\rangle\) is a free semigroup, then its monomial curve is defined by \(f_1,\dots,f_g\in\mathbb{C}[u_0,\dots,u_g]\) equations of degrees \(\deg(f_i)=n_ia_i\). According to the discussion at the beginning of this section, we can endow \(T^1\) with a grading in such a way \(\deg(u_i)=a_i\) and the equations of the deformation \(F_i(\mathbf{u,w})=f_i(\mathbf{u})+\sum_{j=1}^{\tau}w_js^1_j(\mathbf{u})\) are homogeneous with \(\deg(F_i)=\deg(f_i)\), for $i=1,\ldots , k$. In this way, the monomial basis of \(T^1\) provided by Theorem \ref{thm:basisdeffree} have the following weights

\begin{enumerate}
    \item For $(k_0,k_1)\in E_{\ell_0^{(1)},n_1}$, with $r=2,\ldots , m$ and $k_r=0,1,\ldots , n_r-1$, we have 
    $$\deg\Big((u_0^{k_0},u_1^{k_1},u_2^{k_2}\cdots u_m^{k_m}) \vec{e}_1\Big)=\sum_{i=1}^{m}k_ia_i-n_1a_1.$$
    \item For $(k_0,k_1)\in D_{\ell_1^{(2)}}$, with $k_2=0,\ldots , n_2-1$, $r=3,\ldots , m$ and $k_r=0,\ldots , n_r-1$, we have
    $$\deg\Big((u_0^{k_0},u_1^{k_1},u_2^{k_2}\cdots u_m^{k_m}) \vec{e}_2\Big)=\sum_{i=1}^{m}k_ia_i-n_2a_2.$$
    \item For $(k_0,k_1,\ldots , k_{r-1})\in D_{\ell_{r-1}^{(r)}}$ with $k_r=0,\ldots , n_r-1$ and $r'=r+1,\ldots , m$ so that $k_{r'}=0,\ldots , n_{r'}-1$, we have
    $$\deg\Big((u_0^{k_0},u_1^{k_1},u_2^{k_2}\cdots u_m^{k_m}) \vec{e}_r\Big)=\sum_{i=1}^{m}k_ia_i-n_ra_r,$$
    for $r=3,\ldots , m-1$.\\[1pt]
    \item For $(k_0,k_1\ldots , k_{m-1})\in D_{\ell_{m-1}^{(m)}}$ with $k_m=0,\ldots , n_m-2$, we have
    $$\deg\Big((u_0^{k_0},u_1^{k_1},u_2^{k_2}\cdots u_m^{k_m}) \vec{e}_m\Big)=\sum_{i=1}^{m} k_ia_i-n_ma_m$$
\end{enumerate}
Set $d_{m,k}= | D_{\ell_{m-1}^{(m)},k}^{+}|$ where \(
    D_{\ell_{m-1,k}^{(m)}}^{+}= \Big \{(k_0,\ldots, k_{m-1})\in D_{\ell_{m-1}^{(m)}} : \displaystyle \sum_{i=1}^{m-1} k_ia_i > (n_m-k) a_m \Big \}\) for \(k=0,\dots,n_m-2.\) We have therefore the following.

\begin{theorem}\label{thm:dimensionpositive}
Let \(\Gamma_m\) be a free numerical semigroup with \(m\) generators. Then,

\[\tau_{m-1}^{+} + (n_m-1)(\mu_{m-1} +d_{m,0})+\sum_{k=1}^{n_m-2}\left\lfloor\frac{ka_m}{n_m}\right\rfloor \geq \tau_m^{+}\geq \tau^{+}_{m-1}+(n_m-1)(\tau^{+}_{m-1}+d_{m,0}).\]

Moreover, 
\[
\tau_{m}^{+}\geq \tau^{+}_{\langle n_1,\ell_{0}^{(1)}\rangle}+\sum_{\substack{j\notin L_1\\ j\notin J_m}}
(n_j-1)\tau^{+}_{\langle n_1,\ell_{0}^{(1)}\rangle}+\sum_{j\in L_1} \Big(\sum_{k=1}^{n_j-1} b_{j,k}\Big)+(n_m-1)\Big(\sum_{j\in J_m}\frac{a_j}{e_j}+d_{m,0} \Big ),
\]

where \(J_m:=\big\{j\in\{1,\dots,m-1\} : a_m\geq n_ja_j\big\},\) $L_1:=\big\{i \in \{2,\ldots , m\} : a_i < n_1 a_1\big\}$ and \(b_{j,k}\) is defined as in Proposition \ref{prop:Proposition1}.
\end{theorem}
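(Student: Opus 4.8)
The plan is to extract both chains of inequalities directly from the explicit monomial basis of $T^1=\mathbb{C}[\Gamma_m]^{\,m}/N$ furnished by Theorem \ref{thm:basisdeffree}, organised along the gluing $\Gamma_m=n_m\Gamma_{m-1}+a_m\mathbb{N}$. By the proof of Theorem \ref{thm:basisdeffree} this basis is the disjoint union of an \emph{inherited} part $\{\,u_m^{k}\Phi(z)\ :\ z\in\mathcal{B}_{m-1},\ 0\le k\le n_m-1\,\}$, where $\Phi$ is the injection of Theorem \ref{thm:injective} and $\mathcal{B}_{m-1}$ is a monomial basis of $T^1(C^{\Gamma_{m-1}})$, together with a \emph{new} part consisting of the vectors $(u_0^{k_0}\!\cdots u_m^{k_m})\vec{e}_m$ with $(k_0,\dots,k_{m-1})\in D_{\ell_{m-1}^{(m)}}$ and $0\le k_m\le n_m-2$. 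Since $\deg u_i=a_i$ and $\deg f_i=n_i a_i$, the weight of $u_m^{k}\Phi(z)$ equals $ka_m+\deg(z)$ (the degree of $z$ being computed in $T^1(C^{\Gamma_{m-1}})$, which has no weight-zero part), and the weight of $(u_0^{k_0}\!\cdots u_m^{k_m})\vec{e}_m$ equals $\sum_{i\le m-1}k_i a_i+k_m a_m-n_m a_m$. Counting the basis vectors of strictly positive weight then gives the exact identity
\[
\tau_m^{+}=n_m\,\tau_{m-1}^{+}+\sum_{k=1}^{n_m-1}c_k+\sum_{k=0}^{n_m-2}d_{m,k},\qquad c_k:=\bigl|\{\,z\in\mathcal{B}_{m-1}:-ka_m<\deg(z)<0\,\}\bigr|,
\]
where for fixed $k_m=k$ the new vectors of positive weight are precisely the $d_{m,k}$ elements counted by $D^{+}_{\ell_{m-1}^{(m)},k}$.

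The first displayed chain follows from elementary monotonicity estimates applied to this identity. For the lower bound one uses $c_k\ge 0$ together with the inclusions $D^{+}_{\ell_{m-1}^{(m)},0}\subseteq D^{+}_{\ell_{m-1}^{(m)},k}$, i.e.\ $d_{m,k}\ge d_{m,0}$, obtaining $\tau_m^{+}\ge n_m\tau_{m-1}^{+}+(n_m-1)d_{m,0}=\tau_{m-1}^{+}+(n_m-1)(\tau_{m-1}^{+}+d_{m,0})$. For the upper bound one rewrites $n_m\tau_{m-1}^{+}+\sum_{k=1}^{n_m-1}c_k=\tau_{m-1}^{+}+\sum_{k=1}^{n_m-1}(\tau_{m-1}^{+}+c_k)$ and bounds each summand $\tau_{m-1}^{+}+c_k=|\{z:\deg(z)>-ka_m\}|\le\dim_{\mathbb{C}}T^1(C^{\Gamma_{m-1}})=\mu_{m-1}$, while splitting the new part as $\sum_{k=0}^{n_m-2}d_{m,k}=(n_m-1)d_{m,0}+\sum_{k=1}^{n_m-2}(d_{m,k}-d_{m,0})$ and estimating $d_{m,k}-d_{m,0}=\bigl|\{k_\bullet\in D_{\ell_{m-1}^{(m)}}:(n_m-k)a_m<\sum_i k_i a_i\le n_m a_m\}\bigr|\le\lfloor ka_m/n_m\rfloor$; adding these bounds yields the upper bound. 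The one step that needs genuine care is this last estimate: it uses that $n_m=\gcd(a_0,\dots,a_{m-1})/\gcd(a_0,\dots,a_m)$ divides every generator of $\Gamma_{m-1}$, so every value $\sum_i k_i a_i$ is a multiple of $n_m$; that these values are pairwise distinct on $D'_{\ell_{m-1}^{(m)}}$, so one is counting multiples of $n_m$ in a half-open interval of length $ka_m$; and that the value $n_m a_m$ itself is not attained on $D'_{\ell_{m-1}^{(m)}}$, because the exponent $(\ell_0^{(m)},\dots,\ell_{m-1}^{(m)})$ represents the reducible monomial $u_m^{n_m}$ and so lies outside $D'_{\ell_{m-1}^{(m)}}$. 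These three facts, together with the cardinality identities \eqref{eqn:aux1} and \eqref{eqn:cardinalD'}, give the bound.

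For the ``moreover'' inequality the plan is instead to lower-bound $\tau_m^{+}$ by exhibiting \emph{disjoint} families of positive-weight basis vectors, one family per summand of the asserted bound. From the $\vec{e}_1$-block $(u_0^{j}u_1^{i}u_2^{k_2}\!\cdots u_m^{k_m})\vec{e}_1$ with $(i,j)\in E_{\ell_0^{(1)},n_1}$, restricting to the tuples $(k_2,\dots,k_m)$ with at most one nonzero coordinate $k_r$ ($r\in\{2,\dots,m\}$, $1\le k_r\le n_r-1$) produces disjoint subfamilies: the all-zero tuple contributes the $\tau^{+}_{\langle n_1,\ell_0^{(1)}\rangle}$ vectors with $ia_0+ja_1>n_1a_1$; a single nonzero $k_r$ with $r\in L_1$ contributes exactly $b_{r,k_r}$ vectors, the quantity computed in Proposition \ref{prop:Proposition1}, hence $\sum_{k=1}^{n_r-1}b_{r,k}$ over all admissible $k_r$; and a single nonzero $k_r$ with $r\notin L_1\cup J_m$ contributes at least $\tau^{+}_{\langle n_1,\ell_0^{(1)}\rangle}$ vectors, since adding $k_r a_r\ge 0$ does not decrease the weight. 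The remaining summand $(n_m-1)\bigl(\sum_{j\in J_m}\tfrac{a_j}{e_j}+d_{m,0}\bigr)$ is accounted for by two further disjoint families, living in the $\vec{e}_j$-blocks ($j\in J_m$) and in the $\vec{e}_m$-block: when $a_m\ge n_j a_j$, each of the $|D'_{\ell_{j-1}^{(j)}}|=a_j/e_j$ lowest-weight vectors supported on $\vec{e}_j$ acquires positive weight after multiplication by any $u_m^{k}$ with $1\le k\le n_m-1$, and for each $k_m=0,\dots,n_m-2$ the $\vec{e}_m$-block contributes $d_{m,0}$ vectors of positive weight. Summing the cardinalities of these mutually disjoint families yields the stated inequality. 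The main obstacle is precisely this bookkeeping: one must verify that the families are genuinely pairwise disjoint as subsets of the basis of Theorem \ref{thm:basisdeffree} and that the boundary cases (those with $a_r=n_1a_1$ or $ka_m=n_j a_j$, which a priori could produce a weight-zero vector) do not spoil the counts.
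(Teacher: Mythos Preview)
Your approach is essentially the same as the paper's: both rest on the basis decomposition of Theorem~\ref{thm:basisdeffree} into the inherited part $\{u_m^{k}\Phi(z)\}$ and the new $\vec{e}_m$-part indexed by $D_{\ell_{m-1}^{(m)}}$, followed by the same monotonicity estimates $d_{m,k}\ge d_{m,0}$ and $d_{m,k}-d_{m,0}\le\lfloor ka_m/n_m\rfloor$, and for the ``moreover'' part both exhibit the same disjoint families of positive-weight vectors in the $\vec{e}_1$-, $\vec{e}_j$- ($j\in J_m$) and $\vec{e}_m$-blocks. Your packaging via the exact identity $\tau_m^{+}=n_m\tau_{m-1}^{+}+\sum_{k=1}^{n_m-1}c_k+\sum_{k=0}^{n_m-2}d_{m,k}$ is a minor but genuine tidying of the paper's argument: it replaces the paper's case split on whether $a_m\ge n_ja_j$ for all $j$ by the single clean bound $\tau_{m-1}^{+}+c_k\le\mu_{m-1}$, and makes the lower bound $c_k\ge0$ immediate. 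One small correction: the exponent $(\ell_0^{(m)},\dots,\ell_{m-1}^{(m)})$ does indeed lie outside $D'_{\ell_{m-1}^{(m)}}$ (as one checks in Example~\ref{ex:examplebasis}), but not because the monomial is ``reducible''---rather, the construction of $D'$ forces $k_{m-1}\le\ell_{m-1}^{(m)}-1$ on $I_{\ell_{m-1}^{(m)}}$ and bounds the earlier coordinates on $I_{\ell_{m-2}^{(m)}}$; the value $n_ma_m$ is then genuinely skipped among the $a_m$ pairwise distinct multiples of $e_{m-1}$ realised on $D'$. Also, in the $\vec{e}_j$-count for $j\in J_m$ you (like the paper) write $a_j/e_j$ where strictly $|D_{\ell_{j-1}^{(j)}}|=a_j/e_j-1$; this does not affect the inequality but is worth noting.
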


\begin{proof}
As in the proof of Theorem \ref{thm:basisdeffree}, let us denote by \(\mathcal{B}_{m-1}\) the \(\mathbb{C}\)--basis of \(\mathbb{C}[\Gamma_{m-1}]^{m-1}/N_{m-1}.\) By Theorem \ref{thm:injective} we have the injective map
    \[\begin{array}{ccc}
        \mathbb{C}[\Gamma_{m-1}]^{m-1}/N_{m-1}&\xrightarrow{\Phi}&\mathbb{C}[\Gamma_{m}]^{m}/N_m\\
        \overline{z}&\mapsto&(\overline{z},0)
    \end{array}\]
    Then, Theorem \ref{thm:basisdeffree} shows that the basis \(\mathcal{B}_m\) decomposes as
    \[\mathcal{B}_m=\Big \{u_{m}^{k}\Phi(\overline{z})\;\colon\;\overline{z}\in\mathcal{B}_{m-1},\;0\leq k\leq n_{m}-1\Big\}\sqcup\Big\{(u_0^{k_0}u_1^{k_1}u_2^{k_2}\cdots u_m^{k_m}) \vec{e}_m\;\colon\;(k_0,k_1\ldots , k_{m-1})\in D_{\ell_{m-1}^{(m)}}\Big\}.\]

Let us first assume that \(a_m\geq a_{j}n_{j}\) for all \(j=1,\dots,m-1\), then for all \(k\geq 1\) and denote by \(z=\Phi(\overline{z})\) for \(\overline{z}\in\mathcal{B}_{m-1}.\) We have
\[\deg(u^{k}_mz)=\deg(z)+ka_m=\sum_{i=0}^{m-1}\alpha_ia_i-n_ja_j+ ka_m\geq 0.\]
Therefore, if \(a_m\geq a_{j}n_{j}\) for all \(j=1,\dots,m-1\), then the decomposition of the basis implies that
\begin{equation}\label{eqn:dimension1}
    \tau^{+}_m=\tau_{m-1}^{+} + (n_m-1)\mu_{m-1} + \sum_{k=0}^{n_m-2} d_{m,k}.
\end{equation}

Now, let us assume the existence of an index \(j_0\in\{1,\dots,m-1\}\) such that \(a_m<a_{j_0}n_{j_0}.\) Thus, there exists \(z\in\mathcal{B}_{m-1}\) such that \(\deg(u_mz)<0.\) Hence, in this case we obtain the strict inequality
\begin{equation}\label{eqn:dimension2}
    \tau^{+}_m<\tau_{m-1}^{+} + (n_m-1)\mu_{m-1} + \sum_{k=0}^{n_m-2} d_{m,k}.
\end{equation}

Let us now move to provide a lower bound. Set \(J_m=\big\{j\in \{1,\ldots, m-1\} : a_m \geq n_ja_j\big\}\) and assume \(\{1,\dots,m-1\}\setminus J_m\neq \emptyset\) as otherwise we are in the previous situation. Observe that, by the decomposition of the basis \(\mathcal{B}_m,\) any element \(z\in\mathcal{B}_{m-1}\) with \(\deg(z)\geq 0\) also satisfies \(\deg(a_m^kz)\geq 0\) for \(k=0,\dots,n_{m}-1.\) This implies the inequality \(\tau_m^{+}\geq n_{m}\tau_{m-1}^{+}.\) Analogously, for any \(j\in J_m\) and any \(z\in D_{\ell^{(j)}_{j-1}}\) such that \(\deg(z)\leq 0\) we have \(\deg(a_m^kz)\geq 0\) for \(k=1,\dots,n_{m}-1.\) Thus, if we set \(d^{-}_{j}=\Big \{z\in D_{\ell^{(j)}_{j-1}}\ |\ \deg(z)<0\Big\} \) for \(j\in J_m\), then
\begin{equation}\label{eqn:dimension3}
    \tau_m^{+}\geq n_{m}\tau_{m-1}^{+}+(n_m-1)(\sum_{j\in J_m}d^{-}_{j})+\sum_{k=0}^{n_m-2}d_{m,k}.
\end{equation}

Independently of the assumptions on \(a_m\geq n_ja_j\) or \(a_m<n_{j_0}a_{j_0}\), let us now estimate the sum \(\displaystyle\sum_{k=0}^{n_m-2}d_{m,k}.\) Recall that $d_{m,0}=\Big|D_{\ell_{m-1,0}^{(m)}}^{+}\Big|$ and $d^{'}_{m,0}=\Big|D_{\ell_{m-1}^{(m)}}^{'+}\Big|=d_{m,0}+1$, where

\begin{align*}
   D_{\ell_{m-1,0}^{(m)}}^{'+}=&\Big \{(k_0,\ldots, k_{m-1})\in D^{'}_{\ell_{m-1}^{(m)}} : \sum_{i=1}^{m-1} k_ia_i > n_m a_m \Big \}\\
    D_{\ell_{m-1,k}^{(m)}}^{'+}=& \Big \{(k_0,\ldots, k_{m-1})\in D^{'}_{\ell_{m-1}^{(m)}} : \sum_{i=1}^{m-1} k_ia_i > (n_m-k) a_m \Big \}
\end{align*}
This implies that 
$$
0\leq \Big|D^{'+}_{\ell_{m-1,k}^{(m)}} \setminus D^{'+}_{\ell_{m-1,0}^{(m)}}\Big|\leq \Big \lfloor \frac{ka_m}{n_m} \Big \rfloor,
$$ 
therefore $d^{'}_{m,0}\leq d^{'}_{m,k}=d_{m,k}+1 \leq d^{'}_{m,0}+\lfloor k a_m/n_m \rfloor$. We deduce then that
\begin{equation}
 \label{eqn:dimension4}   
 (n_m-1)(d^{'}_{m,0}-1)=\sum_{k=0}^{n_m-2} (d^{'}_{m,0}-1) \leq \sum_{k=0}^{n_m-2}d_{m,k}
\leq (n_m-1) (d^{'}_{m,0}-1)+\left\lfloor\frac{k a_m}{n_m} \right\rfloor.
\end{equation}

As \(d^{-}_j\geq 0\) for all \(j\in J_m\), then a combination of \eqref{eqn:dimension1},\eqref{eqn:dimension2},\eqref{eqn:dimension3} and \eqref{eqn:dimension4} provides the desired inequalities
\[\tau_{m-1}^{+} + (n_m-1)(\mu_{m-1} +d_{m,0})+\sum_{k=1}^{n_m-2}\left\lfloor\frac{ka_m}{n_m}\right\rfloor \geq \tau_m^{+}\geq \tau^{+}_{m-1}+(n_m-1)(\tau^{+}_{m-1}+d_{m,0}).\]
\medskip
To finish, let us show the inequality 
\[\tau_{m}^{+}\geq \tau^{+}_{\langle n_1,\ell_{0}^{(1)}\rangle}+\sum_{\substack{j\notin L_1\\ j\notin J_m}}(n_j-1)\tau^{+}_{\langle n_1,\ell_{0}^{(1)}\rangle}+\sum_{j\in L_1} (\sum_{k=1}^{n_j-1} b_{j,k})+(n_m-1)\big(\sum_{j\in J_m}\frac{a_j}{e_j}+d_{m,0}\big).\]
Observe that the basis \(\mathcal{B}_{m-1}\) is computed through the sets \(E_{\ell_0^{(1)},n_1}\) and \(D_{\ell_{s-1}^{(s)}}.\) 

In this way, it is obvious that in \(\mathcal{B}_m\) there are at least \(\tau^{+}_{\langle n_1,\ell_{0}^{(1)}\rangle}\) positive weight elements which are precisely those of the form \((u_0^iu_1^j)\vec{e}_1\) with \((i,j)\in E_{\ell_0^{(1)},n_1}\) such that \(ia_0+ja_1>n_1a_1.\) Now, under the notations of Proposition \ref{prop:Proposition1}, there are $\displaystyle\sum_{j\in L_1} \displaystyle\sum_{k=1}^{n_j-1} b_{j,k}$ positive weight elements which are of the form \((u_0^iu_1^ju_s^k)\vec{e}_1\) such that \(ia_0+ja_1+ka_s>n_1a_1\) with \((i,j)\in E_{\ell_0^{(1)},n_1}\) and \(a_s<n_1a_1.\) 

Additionally, for any \(r\in J_m\) we have \(\deg\big((u_0^{k_0}u_1^{k_1}u_2^{k_2}\cdots u_m^{k_m}) \vec{e}_r\big)>0\) if $(k_0,k_1,\ldots , k_{r-1})\in D_{\ell_{r-1}^{(r)}}$ with $k_r=0,\ldots , n_r-2$ and $r'=r+1,\ldots , m-1$ so that $k_{r'}=0,\ldots , n_{r'}-1$ and \(k_m=1,\dots,n_m-1\). Observe that the number of such elements is precisely \((n_m-1)\big|D_{\ell_{r-1}^{(r)}}\big|.\) Recall that eqn.~\eqref{eqn:cardinalD'} implies \(\Big|D_{\ell_{r-1}^{(r)}}\Big|=a_r/e_r.\) Therefore, the previous discussion provides the desired inequality.
\end{proof}

An immediate consequence of Theorem \ref{thm:dimensionpositive} is the following:
\begin{corollary}
    Let \(\Gamma=\langle a_1,\dots,a_m\rangle\) be a free numerical semigroup. Then the dimension \(\tau^{-}_m\) of the moduli space \(W_\Gamma\) satisfies the following inequalities
    
   \[n_m\tau_{m-1}^{-}-(n_m-1)(d_{m,0}+a_m-1)\geq \tau_m^{-}\geq \tau_{m-1}^{-}-(n_m-1)(d_{m,0}+a_m-1)-\sum_{k=1}^{n_m-2}\left\lfloor \frac{ka_m}{n_m}\right \rfloor.\]
\end{corollary}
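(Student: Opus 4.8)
The plan is to obtain the statement as a formal consequence of Corollary~\ref{cor:dimModuli1} and Theorem~\ref{thm:dimensionpositive}. Since \(\Gamma_m\) is a complete intersection, Corollary~\ref{cor:dimModuli1} identifies \(\dim W_{\Gamma_m}=\tau_m^{-}=c(\Gamma_m)-\tau_m^{+}\), and likewise \(\tau_{m-1}^{-}=c(\Gamma_{m-1})-\tau_{m-1}^{+}\). Hence it suffices to transport the two-sided bound for \(\tau_m^{+}\) provided by Theorem~\ref{thm:dimensionpositive} through the substitution \(\tau_m^{-}=c(\Gamma_m)-\tau_m^{+}\).

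First I would record the two arithmetic identities that produce the needed cancellations. On the one hand, Delorme's recursion for free semigroups (already used in the proof of Theorem~\ref{thm:basisdeffree}) reads \(c(\Gamma_m)=n_m\,c(\Gamma_{m-1})+(n_m-1)(a_m-1)\). On the other hand, since the grading on \(T^1\) carries no weight-zero part, \(\tau_{m-1}^{+}+\tau_{m-1}^{-}=\dim_{\mathbb C}T^1(C^{\Gamma_{m-1}})=c(\Gamma_{m-1})=\mu_{m-1}\); in particular \(\mu_{m-1}=c(\Gamma_{m-1})\) and \(\tau_{m-1}^{+}=c(\Gamma_{m-1})-\tau_{m-1}^{-}\).

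Now I would substitute. Feeding the lower estimate \(\tau_m^{+}\geq n_m\tau_{m-1}^{+}+(n_m-1)d_{m,0}\) of Theorem~\ref{thm:dimensionpositive} into \(\tau_m^{-}=c(\Gamma_m)-\tau_m^{+}\) gives
\[
\tau_m^{-}\leq c(\Gamma_m)-n_m\tau_{m-1}^{+}-(n_m-1)d_{m,0},
\]
and inserting \(c(\Gamma_m)=n_m c(\Gamma_{m-1})+(n_m-1)(a_m-1)\) together with \(\tau_{m-1}^{+}=c(\Gamma_{m-1})-\tau_{m-1}^{-}\) makes the \(c(\Gamma_{m-1})\)-terms telescope, leaving the claimed upper bound with leading term \(n_m\tau_{m-1}^{-}\). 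Symmetrically, feeding the upper estimate \(\tau_m^{+}\leq \tau_{m-1}^{+}+(n_m-1)(\mu_{m-1}+d_{m,0})+\sum_{k=1}^{n_m-2}\lfloor ka_m/n_m\rfloor\) into \(\tau_m^{-}=c(\Gamma_m)-\tau_m^{+}\), replacing \(\mu_{m-1}\) by \(c(\Gamma_{m-1})\) and using the recursion once more, produces the lower bound for \(\tau_m^{-}\); here the leading term is only \(\tau_{m-1}^{-}\), the smaller multiplicity reflecting that the conductors of the \(n_m-1\) copies \(u_m^k\mathcal{B}_{m-1}\) with \(k\geq1\) are absorbed into the term \((n_m-1)\mu_{m-1}\), so that only the \(k=0\) copy's \(c(\Gamma_{m-1})\) telescopes against \(\tau_{m-1}^{+}\).

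The argument is purely formal, so there is no genuine obstacle; the only delicate point is the bookkeeping of the telescoping, namely checking that the coefficient of \(c(\Gamma_{m-1})\) entering via Delorme's recursion matches exactly what is subtracted when \(\tau_{m-1}^{+}\) (respectively \(\mu_{m-1}\)) is rewritten through \(\tau_{m-1}^{+}=c(\Gamma_{m-1})-\tau_{m-1}^{-}\), so that only a \(\tau_{m-1}^{-}\)-term with the correct multiplicity, together with the explicit corrections involving \(d_{m,0}\), \(a_m\) and \(\sum_{k=1}^{n_m-2}\lfloor ka_m/n_m\rfloor\), survives.
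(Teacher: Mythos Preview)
Your approach is exactly the one the paper has in mind: it states the corollary as an ``immediate consequence'' of Theorem~\ref{thm:dimensionpositive}, and the route is precisely to combine $\tau^{-}=c(\Gamma)-\tau^{+}$ from Corollary~\ref{cor:dimModuli1}, Delorme's recursion $c(\Gamma_m)=n_m\,c(\Gamma_{m-1})+(n_m-1)(a_m-1)$, and $\mu_{m-1}=c(\Gamma_{m-1})$, then substitute into the two-sided bound for $\tau_m^{+}$.

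One word of caution on the bookkeeping you defer: if you actually carry out the telescoping you described, the corrective term comes out as $(n_m-1)\bigl(d_{m,0}-(a_m-1)\bigr)$, not $(n_m-1)\bigl(d_{m,0}+(a_m-1)\bigr)$. For instance, from $\tau_m^{+}\geq n_m\tau_{m-1}^{+}+(n_m-1)d_{m,0}$ one gets
\[
\tau_m^{-}\leq n_m c(\Gamma_{m-1})+(n_m-1)(a_m-1)-n_m\bigl(c(\Gamma_{m-1})-\tau_{m-1}^{-}\bigr)-(n_m-1)d_{m,0}=n_m\tau_{m-1}^{-}-(n_m-1)\bigl(d_{m,0}-a_m+1\bigr),
\]
and the lower bound telescopes in the same way. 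So the displayed inequality in the corollary carries a sign typo; your method is correct and reproduces the intended statement, just not the printed one verbatim.
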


The sets \(D_{\ell_{j-1}^{(j)}}\) used to determine the monomial basis of \(T^1\) define recursively a \(j\)-dimensional staircase in \(\mathbb{N}^m.\) Observe that in order to determine \(d_{m,k}\) or \(d^{-}_j\) we need to count how many points of this staircase are below and over the hyperplane defined by \(n_ja_j=\ell_0^{j}a_0+\cdots+\ell_{j-1}^{j}a_j.\) Without any extra assumptions on the generators of \(\Gamma_m,\) it is quite a difficult task from a combinatorial point of view to provide an exact formula for \(d_{m,k}\) or \(d^{-}_j\) and hence for \(\tau_m^{+}.\) However, we can be more precise if we impose some extra conditions over the generators of the semigroup. In fact, we will show that the bounds of Theorem \ref{thm:dimensionpositive} are sharp.
\medskip

Let us first start with the following new characterization the semigroup of values of an irreducible plane curve:

 \begin{theorem}\label{prop:charirreduciblesemigroup}
     Let \(\Gamma=\langle a_0,\dots,a_m\rangle\) be a free numerical semigroup. Then the following statements are equivalent:
     \begin{enumerate}
         \item \(n_ia_i<a_{i+1}\) for all \(i,\) i.e. \(\Gamma\) is the semigroup of values of an irreducible plane curve singularity.
         \item The dimension of the positive part of \(T^1\) can be computed recursively via the formula
         \[\tau^{+}_m=\tau_{m-1}^{+} + (n_m-1)\mu_{m-1} + \sum_{k=0}^{n_m-2} d_{m,k}.\]
     \end{enumerate}
       Moreover for a free semigroup satisfying the conditions $(1)$ we have 
         \[\sum_{k=0}^{n_m-2} d_{m,k}=\frac{(n_m-1)\mu_{m-1}}{2}+\frac{(n_m-3)(a_m/e_m-3)}{2}+\left\lfloor\frac{a_m}{e_mn_m}\right\rfloor-2\]
 \end{theorem}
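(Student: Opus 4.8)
The plan is to prove Theorem~\ref{prop:charirreduciblesemigroup} by establishing the equivalence (1)$\Leftrightarrow$(2) first, and then deriving the closed formula for $\sum_{k=0}^{n_m-2}d_{m,k}$ under the hypothesis (1). For the implication (1)$\Rightarrow$(2): the condition $n_ja_j<a_{j+1}$ for all $j$ propagates (by an easy monotonicity argument using that all generators are positive and the $a_j$ strictly increase in the free chain) to $a_m>n_ja_j$ for \emph{every} $j=1,\dots,m-1$, i.e.\ $J_m=\{1,\dots,m-1\}$ in the notation of Theorem~\ref{thm:dimensionpositive}. As shown inside the proof of Theorem~\ref{thm:dimensionpositive}, precisely in that case every element $z\in\mathcal{B}_{m-1}$ of nonnegative degree keeps nonnegative degree after multiplication by $u_m^k$, and more importantly no new negative-degree obstruction is created, so that the inequality collapses to the equality $\tau^{+}_m=\tau_{m-1}^{+}+(n_m-1)\mu_{m-1}+\sum_{k=0}^{n_m-2}d_{m,k}$. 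Conversely, for (2)$\Rightarrow$(1) I would argue by contraposition exactly as in the proof of Theorem~\ref{thm:dimensionpositive}: if there is an index $j_0$ with $a_m<n_{j_0}a_{j_0}$, then there exists $z\in\mathcal{B}_{m-1}$ with $\deg(u_mz)<0$, whence the strict inequality $\tau^{+}_m<\tau_{m-1}^{+}+(n_m-1)\mu_{m-1}+\sum_{k=0}^{n_m-2}d_{m,k}$ holds, so the recursive formula fails; moreover one must check that the failure of $n_ja_j<a_{j+1}$ for some internal $j$ indeed forces such a $j_0$ for a suitable truncation of the chain, so the argument has to be run along the whole free tower, not just at the top step.

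Once the equivalence is in place, the closed formula is a counting computation for $\sum_{k=0}^{n_m-2}d_{m,k}$ in the plane-branch case. Here I would use that, under (1), the set $D'_{\ell_{m-1}^{(m)}}$ is an honest planar/higher-dimensional staircase of total cardinality $a_m$ (by Equation~\eqref{eqn:cardinalD'} and the recursion \eqref{eqn:aux1}), and the hyperplane $\sum_{i=1}^{m-1}k_ia_i=(n_m-k)a_m$ cuts off exactly $d_{m,k}$ lattice points of the staircase lying strictly above it. The key point is that in the plane-branch situation the weights $\deg(z)$ for $z$ ranging over the staircase $D_{\ell_{m-1}^{(m)}}$ take, after dividing by $e_m$, consecutive integer values in an explicit range, so that the count of points above the shifted hyperplane is governed linearly by $k$ up to a single floor correction coming from the endpoint. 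Summing the resulting arithmetic-progression-type expression over $k=0,\dots,n_m-2$ and regrouping produces the three terms $\frac{(n_m-1)\mu_{m-1}}{2}$, $\frac{(n_m-3)(a_m/e_m-3)}{2}$ and $\lfloor a_m/(e_mn_m)\rfloor-2$; the $\mu_{m-1}$-term appears because half of the staircase points inherit the positive/negative split of $\mathcal B_{m-1}$, and the parabola-to-line intersection contributes the $(n_m-3)(a_m/e_m-3)/2$ piece. I would cross-check the constants against Cassou-Nogu\`es \cite[Theoreme~8]{pierette}, to which this specializes, and against the low-genus examples.

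The main obstacle I expect is the precise combinatorial bookkeeping in the last step: one must be careful that the staircase $D_{\ell_{m-1}^{(m)}}$ (rather than $D'$) has already had one boundary point removed, that the weight function $\sum k_ia_i-n_ma_m$ restricted to it is genuinely injective modulo $e_m$ with image a full interval of integers (this is where condition (1) is used in an essential way, via $n_ja_j<a_{j+1}$ ensuring no ``wrap-around'' overlaps between the $I_{\ell_{t-2}}$ and $I_{\ell_{t-1}}$ pieces), and that the floor functions $\lfloor ka_m/n_m\rfloor$ appearing in the general bounds of Theorem~\ref{thm:dimensionpositive} collapse to the single clean term $\lfloor a_m/(e_mn_m)\rfloor$ after summation using $\gcd(n_m,a_m)=1$ and a standard Hermite-type identity $\sum_{k=0}^{n_m-1}\lfloor ka_m/n_m\rfloor=(n_m-1)(a_m-1)/2$. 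Keeping the edge cases $n_m=1$ and small $a_m/e_m$ consistent with the stated formula (note the $(n_m-3)$ and $(a_m/e_m-3)$ factors) will require a short separate verification, but these are routine once the structural statements about the staircase are pinned down.
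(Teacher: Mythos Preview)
Your approach to the equivalence $(1)\Leftrightarrow(2)$ is exactly the paper's: the paper's entire argument for this part is the single sentence that it ``is a consequence of the first part of the proof of Theorem~\ref{thm:dimensionpositive}'', which is precisely what you unpack. Your remark that ``recursively'' in (2) must be read as the formula holding at every level of the free tower---so that if $n_{j}a_{j}\ge a_{j+1}$ for some internal $j$ one runs the strict-inequality argument at the truncation $\Gamma_{j+1}$ rather than at the top---is well observed; the paper leaves this implicit.

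Where you diverge from the paper is in the closed formula for $\sum_{k=0}^{n_m-2}d_{m,k}$. The paper does not derive it at all: it simply cites \cite[Theoreme~8]{pierette}. Your plan to rederive it directly---via the injectivity of the weight function on the staircase $D_{\ell_{m-1}^{(m)}}$, the resulting arithmetic-progression count, and a Hermite-type identity for $\sum_k\lfloor ka_m/n_m\rfloor$---is a legitimate and more self-contained route, and the structural claim that under $(1)$ the weights (after dividing by the relevant $e$) fill out a full integer interval is indeed the crux of why the plane-branch case is clean. But this is genuinely more work than the paper undertakes, and the delicate bookkeeping you anticipate (the removed vertex of $D'$ versus $D$, the endpoint floor, the small-$n_m$ edge cases) is exactly the content of Cassou-Nogu\`es's theorem; citing her, as the paper does, is the economical choice here. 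If you do carry out the direct count, your cross-check against \cite[Theoreme~8]{pierette} is the right sanity test.
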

 \begin{proof}
 $(1)\Leftrightarrow(2)$ is a consequence of the first part of the proof of Theorem \ref{thm:dimensionpositive} as inductively equations \eqref{eqn:dimension1} and \eqref{eqn:dimension2} shows that the upper bound in Theorem \ref{thm:dimensionpositive} is attained if and only if \(n_ia_i<a_{i+1}\) for all \(i\). The formula for \(\sum_{k=0}^{n_m-2} d_{m,k}\) is obtained by \cite[Th\'eor\`eme~8]{pierette}.  
 \end{proof}

Before to show that the lower bound is also sharp, let us first prove the following

\begin{proposition}\label{prop:lemma32}
Let \(\Gamma_m=\langle a_1,\dots,a_m \rangle\) be a free numerical semigroup with \(m\) generators. Under the previous notation, we have
$$
\frac{(\ell_0^{(1)}-1)(n_1-2)}{2} + \left\lfloor \frac{kq_2}{n_2}\right\rfloor - 1 \geq \Big |D^{+}_{\ell_1^{(2)},k}\Big | \geq \frac{(\ell_0^{(1)}-1)(n_1-2)}{2}-|C| -1,
$$
where \(C\) is the triangle defined by the lines $i=0$, $j=n_1$ and $in_1 + j \ell_0^{(2)}=n_1\ell_0^{(2)}$. In particular, if \(n_2a_2>n_1a_1\) then the upper bound is attained. 
\end{proposition}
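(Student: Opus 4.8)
The plan is to estimate the cardinality of the set $D^{+}_{\ell_1^{(2)},k}$ by revisiting the explicit description of $D_{\ell_1^{(2)}}$ coming from Lemma \ref{lem:pierrettelemma4} and Theorem \ref{thm:basisdeffree}, and then counting lattice points on either side of the hyperplane $n_2a_2 = \ell_0^{(2)}a_0 + \ell_1^{(2)}a_1$, scaled appropriately. Recall that $D_{\ell_1^{(2)}} = D'_{\ell_1^{(2)}} \setminus \{\mathbf{h}(\cdot)\} = (I_{\ell_0^{(2)}} \cup I_{\ell_1^{(2)}}) \setminus \{\mathbf{h}\}$; the piece $I_{\ell_0^{(2)}}$ is a genuine rectangular block $0\le i\le \ell_0^{(2)}-1$, $0\le j\le n_1-1$, while $I_{\ell_1^{(2)}}$ is a translate of $D'_{\ell_1^{(1)}}=I_{\ell_0^{(1)}}$ which is itself (after correcting the overlap by the ``negative indices are zero'' convention) essentially the staircase region $E_{\ell_0^{(1)},n_1}$ together with its boundary rows. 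First I would set up the arithmetic identity: for $(i,j)\in E_{\ell_0^{(1)},n_1}$ one has $ia_0 + ja_1 < n_1a_1$ iff $in_1 + j\ell_0^{(1)} < n_1\ell_0^{(1)}$, exactly as in Proposition \ref{prop:Proposition1}, and $\deg$ of the corresponding generator (times $u_2^k$) being $> 0$ translates into $\sum k_i a_i > (n_2-k)a_2$, which by dividing through by $e_1$ (resp. by the relevant common factor) becomes a clean inequality $in_1 + j\ell_0^{(1)} + (\text{shift}) > n_1\ell_0^{(1)} - \lfloor k q_2/n_2\rfloor$ or similar, where $q_2$ absorbs the ratio $a_2/e_2$ up to the factor $n_2$.

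Next I would carry out the count in two parts corresponding to $I_{\ell_0^{(2)}}$ and $I_{\ell_1^{(2)}}$. The block $I_{\ell_0^{(2)}}$ contributes lattice points whose weight lies below $n_2a_2$ by a controlled amount; among its $\ell_0^{(2)}n_1$ points, the number with $\sum k_i a_i > (n_2-k)a_2$ is governed by how many diagonals $in_1 + j\ell_0^{(2)} = n_1\ell_0^{(2)} - \varepsilon$ with $0\le \varepsilon \le \lfloor kq_2/n_2\rfloor$ meet the block; each such diagonal meets the block in essentially one point, giving the $\lfloor kq_2/n_2\rfloor$-type term and the $-1$ correction for the removed maximal point. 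The shifted block $I_{\ell_1^{(2)}}$ contributes the ``staircase'' part: the number of its lattice points lying strictly above the hyperplane equals precisely $|E_{\ell_0^{(1)},n_1}| - (\text{points below or on it})$, and $|E_{\ell_0^{(1)},n_1}| = (\ell_0^{(1)}-1)(n_1-2)$, which explains the leading term $\frac{(\ell_0^{(1)}-1)(n_1-2)}{2}$ after one accounts for the triangular symmetry: the line $in_1 + j\ell_0^{(2)} = n_1\ell_0^{(2)}$ splits a rectangle of area $\sim \ell_0^{(1)}(n_1)$ roughly in half, but the relevant region is only the half of $E_{\ell_0^{(1)},n_1}$ above the main anti-diagonal. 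The quantity $|C|$ — the number of lattice points in the triangle cut out by $i=0$, $j=n_1$ and $in_1 + j\ell_0^{(2)} = n_1\ell_0^{(2)}$ — is exactly the discrepancy between ``half of $E$'' and the true count below the hyperplane, hence the lower bound $\frac{(\ell_0^{(1)}-1)(n_1-2)}{2} - |C| - 1$.

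For the final ``in particular'' clause, when $n_2 a_2 > n_1 a_1$ the hyperplane $in_1 + j\ell_0^{(2)} = n_1\ell_0^{(2)}$ sits high enough that the triangle $C$ degenerates: every point of the relevant half of $E_{\ell_0^{(1)},n_1}$ that could fail to be above the shifted hyperplane is already forced above it by the strict inequality $n_2a_2 > n_1a_1$ applied with $k\ge 1$, so $|C|$ shrinks to zero and the lower bound coincides with the combinatorial value $\frac{(\ell_0^{(1)}-1)(n_1-2)}{2} + \lfloor kq_2/n_2\rfloor - 1$, i.e. the upper bound is attained. I would verify this by checking that under $n_2a_2 > n_1a_1$ the condition $in_1 + j\ell_0^{(2)} > n_1\ell_0^{(2)} - \lfloor kq_2/n_2\rfloor$ holds automatically for all points of $I_{\ell_1^{(2)}}$ in the staircase, since the minimal shifted weight already exceeds $n_1a_1 \ge$ the threshold.

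The main obstacle I expect is bookkeeping the ``negative indices are zero'' convention together with the removal of the maximal lex point $\mathbf{h}$: these two conventions conspire to shift the naive count by $\pm 1$ in several cases, and one must be careful that the $-1$ appearing in both bounds is consistent across the parity of $n_1$ and $\ell_0^{(1)}$ and across whether the hyperplane passes exactly through a lattice point of the block. A secondary difficulty is making the geometric identification of the triangle $C$ precise — showing that the points of $I_{\ell_1^{(2)}}$ below the hyperplane are in bijection with the lattice points of $C$ — which requires tracking the affine translation $(i,j)\mapsto(i-\ell_0^{(2)}, j-\ell_1^{(2)})$ relating $I_{\ell_1^{(2)}}$ to $D'_{\ell_1^{(1)}}$ and checking it carries the relevant half-plane to the triangle as stated. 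Once these alignments are pinned down, the inequalities follow by elementary lattice-point counting in a rectangle cut by a line.
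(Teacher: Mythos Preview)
Your proposed decomposition---attributing the $\lfloor kq_2/n_2\rfloor$ term to $I_{\ell_0^{(2)}}$ and the $\tfrac{(\ell_0^{(1)}-1)(n_1-2)}{2}$ term to $I_{\ell_1^{(2)}}$---does not survive the case $\ell_1^{(2)}=0$. There $D'_{\ell_1^{(2)}}=I_{\ell_0^{(2)}}$ and $I_{\ell_1^{(2)}}$ is empty, yet the leading term $\tfrac{(\ell_0^{(1)}-1)(n_1-2)}{2}$ must still be present. More concretely, your claim that ``all weights in $I_{\ell_0^{(2)}}$ lie below $n_2a_2$'' is false: the point $(\ell_0^{(2)}-1,n_1-1)\in I_{\ell_0^{(2)}}$ has weight $(\ell_0^{(2)}-1)a_0+(n_1-1)a_1$, which exceeds $n_2a_2=\ell_0^{(2)}a_0$ already for $k=0$ whenever $(n_1-1)a_1>a_0$. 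So the bulk term does not come from $I_{\ell_1^{(2)}}$; it comes from $I_{\ell_0^{(2)}}$ (and from both pieces together when $\ell_1^{(2)}\neq 0$). Incidentally, $|E_{\ell_0^{(1)},n_1}|=(\ell_0^{(1)}-1)(n_1-1)$, not $(\ell_0^{(1)}-1)(n_1-2)$, and for $s=2$ the set $I_{\ell_1^{(2)}}$ is a rectangle, not a staircase.

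What the paper actually does is different and relies on an observation you are missing. After dividing the inequality $ia_0+ja_1>(n_2-k)a_2$ by $e_1$ one gets $in_1+j\ell_0^{(1)}>q_2-\tfrac{kq_2}{n_2}$, so the relevant line $s\colon in_1+j\ell_0^{(1)}=q_2$ is \emph{parallel} to the reference line $r\colon in_1+j\ell_0^{(1)}=n_1\ell_0^{(1)}$. When $n_2a_2>n_1a_1$ this parallelism yields a translation carrying the region above $s$ inside $D'_{\ell_1^{(2)}}$ bijectively onto the standard triangle $B$ above $r$ inside the box $[0,\ell_0^{(1)}-1]\times[0,n_1-1]$, whose lattice count is the familiar $\tfrac{(\ell_0^{(1)}-1)(n_1-2)}{2}$; when $\ell_1^{(2)}\neq 0$ one checks that the two extra rectangular pieces $A_3$ and $A_3'$ created by the shift have equal area. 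When $n_2a_2<n_1a_1$ the translation overshoots the box and the lost points are precisely those of the triangle $C$. The $\lfloor kq_2/n_2\rfloor$ term then comes at the end, from sliding $s$ toward the origin by $k$ units and gaining at most one lattice point per unit, not from a separate block. Your reading of the ``in particular'' clause (that $C$ degenerates when $n_2a_2>n_1a_1$) is correct, but the route to the bounds has to go through this parallel-lines argument rather than through your $I_{\ell_0^{(2)}}/I_{\ell_1^{(2)}}$ split.
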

\begin{proof}
Let us first denote \(q_2=a_2/e_2.\) We have \(n_2a_2=\ell^{(2)}_0a_0+\ell^{(2)}_1a_1.\)

Assume $\ell_1^{(2)}=0$, then $D'_{\ell_1^{(2)}}=I_{\ell_0^{(2)}}$; consider
$$
A=\{(i,j)\in I_{\ell_0^{(2)}} : i n_1+j\ell_0^{(1)}>q_2\}.
$$
In order to count the points in $A$, we must distinguish two situations:
\medskip

\noindent Case 1: $n_2 a_2 > n_1 a_1$.
\medskip

The inequality $n_2 a_2 > n_1 a_1$ is equivalent to $\ell_0^{(1)}n_1<q_2$. The lines
\begin{align*}
r \equiv i n_1 + j \ell_0^{(1)}& = n_1\ell_0^{(1)}\\
s \equiv i n_1 + j \ell_0^{(2)}& = q_2
\end{align*}
are parallel. The lines $r$, $i=\ell_0^{(1)}$ and $j=n_1$ enclose a triangle $B$, and $s$, $i=\ell_0^{(2)}$ and $j=n_1$ the triangle $A$. We have that 
$$
|A| = |B| = \frac{(\ell_0^{(1)}-1)(n_1 - 1)}{2}=\frac{c(\Gamma_1)}{2}.
$$

	\begin{center}
		\begin{figure}[H]
			\begin{tikzpicture}[scale=0.85]
			
			\draw[draw=none, pattern=north west  lines, pattern color=blue] (0,3) -- (6, 3) -- (6,0);
			\draw[draw=none, pattern=north east lines, pattern color=red] (2.7,3) -- (8.5, 3) -- (8.5,0);
			\draw[ultra thick] (0,5) -- (0,0) -- (9,0);

            \draw[] (0,4.5) -- (8.5,0) -- (9,-0.25);
             \draw[] (0,3) -- (6,0) -- (6.5,-0.25) ;
   
			\draw[] (6,3) -- (6,-0.5);
			\draw[] (3,3) -- (-0.5,3);

           \draw[] (3,3) -- (8.5,3) -- (8.5,-0.5);
           
			
			\node [below left][DimGray] at (-0.3,3.3) {$n_1$};
			\node [below][DimGray] at (6,-0.5) {$\ell_0^{(1)}$};
   \node [below][DimGray] at (6.7,-0.1) {$r$};
			\node [below ][DimGray] at (8.7,-0.50) {$\ell_0^{(2)}$};
   \node [below][DimGray] at (9.2,0) {$s$};
			\node at (7.5,2) {$|A|$};
			\node at (3.6,1.8) {$|B|$};
			\end{tikzpicture}	
			
			\caption{Case $\ell_1^{(2)}=0$ and $n_2 a_2 > n_1 a_1$.} \label{fig:lattice1}
		\end{figure}
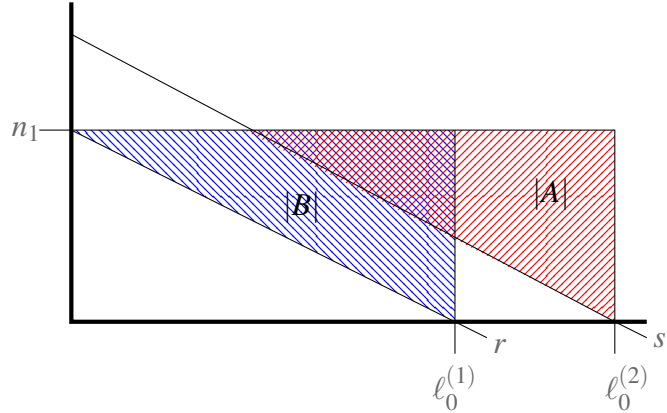
	\end{center}

Case 2: $n_2 a_2 < n_1 a_1$.
\medskip

Consider again the parallel lines 
\begin{align*}
r \equiv i n_1 + j \ell_0^{(1)}& = n_1\ell_0^{(1)}\\
s \equiv i n_1 + j \ell_0^{(2)}& = q_2
\end{align*}

Now the inequality $n_2 a_2 < n_1 a_1$ is equivalent to $\ell_0^{(1)}n_1>q_2$ so that a triangle $C$ delimited by the lines $r$, $s$ and $j=n_1$ appears. The interesting area here is that of the quadrilateral $\overline{A}$ (see Figure \ref{fig:32case2}), namely
$$
|\overline{A}| = |A| - |C| = |B| - |C| = \frac{(\ell_0^{(1)}-1)(n_1-2)}{2} - |C|.
$$
In this way we can bound the area of the region $\overline{A}$ both above and below. 
	\begin{center}
		\begin{figure}[H]
			\begin{tikzpicture}[scale=0.85]
			
		\draw[draw=none, pattern=north west  lines, pattern color=blue] (-2.5,3) -- (6, 3) -- (6,0);
			\draw[draw=none, pattern=north east lines, pattern color=red] (0,3) -- (8.5, 3) -- (8.5,0);

\draw[draw=none, pattern=north west  lines, pattern color=yellow] (-2.5,3) -- (0, 3) -- (0,2);
  
			\draw[ultra thick] (0,3.5) -- (0,0) -- (9.2,0);

           \draw[] (8.5,3) -- (9,3);
            \draw[] (0,3) -- (8.5,0) -- (9,-0.2);
            \draw[] (-2.5,3) -- (6,0) -- (6.5,-0.25) ;
   
			\draw[] (6,3) -- (6,-0.5);
			\draw[] (3,3) -- (-2.5,3);

           \draw[] (3,3) -- (8.5,3) -- (8.5,-0.5);
           
			
			\node [below left][DimGray] at (9.9,3.3) {$n_1$};
			\node [below][DimGray] at (5.7,-0.5) {$\ell_0^{(2)}$};
   \node [below][DimGray] at (6.7,-0.1) {$s$};
			\node [below ][DimGray] at (8.7,-0.50) {$\ell_0^{(1)}$};
   \node [below][DimGray] at (9.2,0) {$r$};
			\node at (7,2) {$|B|$};
			\node at (2,1.9) {$|\overline{A}|$};
			\node at (-0.4,2.65) {$|C|$};
			\end{tikzpicture}	
			
			\caption{Case $\ell_1^{(2)}=0$ and $n_2 a_2 < n_1 a_1$.} \label{fig:32case2}
		\end{figure}
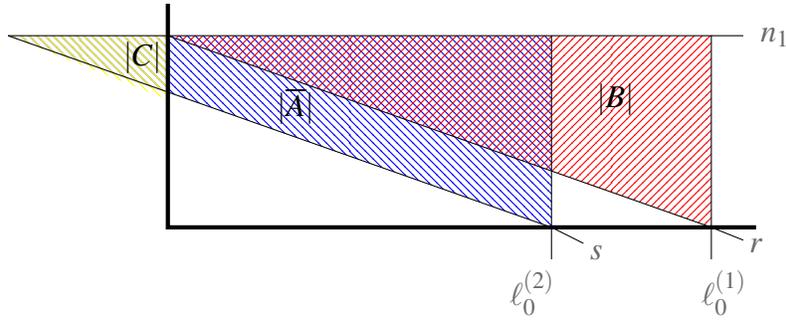
	\end{center}

 Assume now that $\ell_{1}^{(2)}\neq 0$, then $D_{\ell_1^{(2)}}'=I_{\ell_0^{(2)}}\cup I_{\ell_{1}^{(2)}}$, and so the set $A$ can be written as a union $A=A_1\cup A_2$, where

 \begin{align*}
     A_1=&\{(i,j)\in I_{\ell_{0}}^{(2)} : i n_1 + j \ell_0^{(2)}>q_2\}\\
     A_2=&\{(i,j)\in I_{\ell_{1}}^{(2)} : i n_1 + j \ell_0^{(1)}>q_2\}.
 \end{align*}

 We must distinguish again two cases:
 \medskip

 \noindent Case 1: $n_2a_2> n_1a_1$.
\medskip

As in the previous case, we have that $|B|=\frac{c(\Gamma_1)}{2}=|A|$.

	\begin{center}
		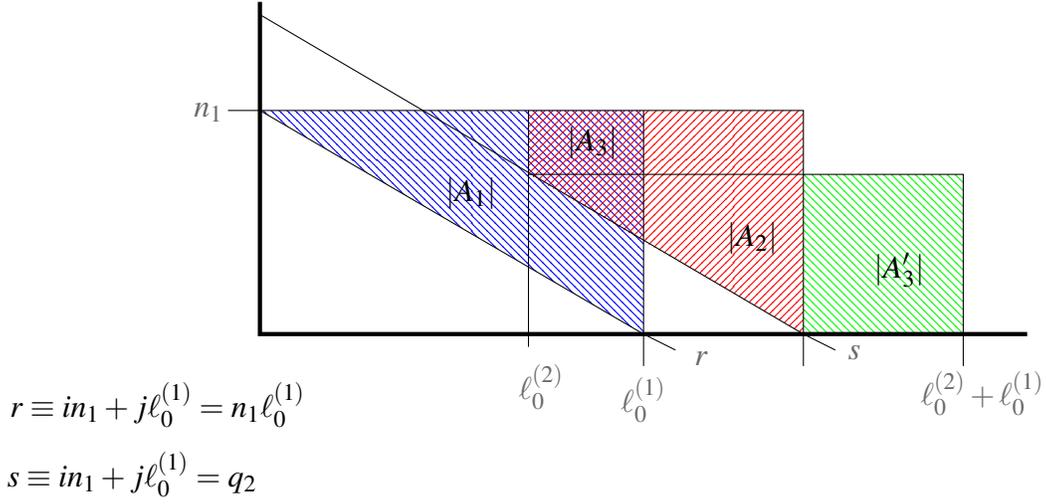
\begin{figure}[H]
			\begin{tikzpicture}[scale=0.85]
			
			\draw[draw=none, pattern=north west  lines, pattern color=blue] (0,3.5) -- (6, 3.5) -- (6,0);
			\draw[draw=none, pattern=north east lines, pattern color=red] (8.5,3.5) -- (8.5, 0) -- (4.2,2.5) -- (4.2,3.5);
			\draw[draw=none, pattern=north west lines, pattern color=green] (8.5,2.5) -- (11, 2.5) -- (11,0) -- (8.5,0);
			\draw[ultra thick] (0,5.2) -- (0,0) -- (12,0);

            \draw[] (0,5) -- (8.5,0) -- (9,-0.25);
             \draw[] (0,3.5) -- (6,0) -- (6.5,-0.25) ;
   
			\draw[] (6,3.5) -- (6,-0.5);
			\draw[] (3.5,3.5) -- (-0.5,3.5);

           \draw[] (3.5,3.5) -- (8.5,3.5) -- (8.5,-0.5);


 \draw[] (4.2,2.5) -- (11,2.5) -- (11,-0.5);

  \draw[] (4.2,3.5) -- (4.2,-0.2) ;
           
			
			\node [below left][DimGray] at (-0.4,3.8) {$n_1$};

   \node [below][DimGray] at (4.4,-0.3) {$\ell_0^{(2)}$};
			\node [below][DimGray] at (6,-0.5) {$\ell_0^{(1)}$};
   \node [below][DimGray] at (6.9,-0.1) {$r$};
			\node [below ][DimGray] at (11.3,-0.4) {$\ell_0^{(2)}+\ell_0^{(1)}$};
   \node [below][DimGray] at (9.3,0) {$s$};
			\node at (7.7,1.5) {$|A_2|$};
			\node at (3.3,2.2) {$|A_1|$};
			\node at (5.2,3) {$|A_3|$};
   \node at (10,1) {$|A_3'|$};
    \node at (-1.6,-1.1) {$r\equiv i n_ 1 + j \ell_0^{(1)}=n_1\ell_0^{(1)} $};
    \node at (-2,-2.2) {$s\equiv i n_ 1 + j \ell_0^{(1)}=q_2 $};
			\end{tikzpicture}	
			
			\caption{Case $\ell_1^{(2)}\neq 0$ and $n_2 a_2 > n_1 a_1$.} \label{fig:lattice3}
		\end{figure}
	\end{center}

We have to check that $|A_3|=|A_3'|$, which is true, as the comparison of the following two easy computations shows:
\begin{align*}
    |A_3|=&(\frac{q_2}{n_1}-\ell_0^{(2)})(n_1-\ell_1^{(2)})=q_2-\frac{\ell_1^{(2)}q_2}{n_1} - \ell_0^{(2)} n_1  \\
    |A_3'|=& \ell_1^{(2)} \left(\ell_0^{(2)} + \ell_0^{(1)} - \frac{q_2}{n_1}\right)
\end{align*}

  \noindent Case 2: $n_2a_2< n_1a_1$.

  	\begin{center}
		\begin{figure}[H]
			\begin{tikzpicture}[scale=0.85]
			
			\draw[draw=none, pattern=north east lines, pattern color=red] (0,3) -- (8.5, 3) -- (8.5,0);

\draw[draw=none, pattern=north west  lines, pattern color=yellow] (-2.5,3) -- (0, 3) -- (0,2);
  
			\draw[ultra thick] (0,3.5) -- (0,0) -- (9.2,0);

           \draw[] (8.5,3) -- (9,3);
            \draw[] (0,3) -- (8.5,0) -- (9,-0.2);
            \draw[] (-2.5,3) -- (6,0) -- (6.5,-0.25) ;
   
			\draw[] (4,3) -- (4,1.6) -- (8.5,1.6);
			\draw[] (3,3) -- (-2.5,3);

           \draw[] (3,3) -- (8.5,3) -- (8.5,-0.5);
           
			\draw[fill] (4,1.6) circle [radius=0.1]; 
			
			\node [below left][DimGray] at (9.9,3.3) {$n_1$};
			\node at (6,2.3) {$|B|$};
			\node at (-0.5,2.65) {$|C|$};
			\end{tikzpicture}	
			
			\caption{Case $\ell_1^{(2)}\neq 0$ and $n_2 a_2 < n_1 a_1$.} \label{fig:lattice2}
		\end{figure}
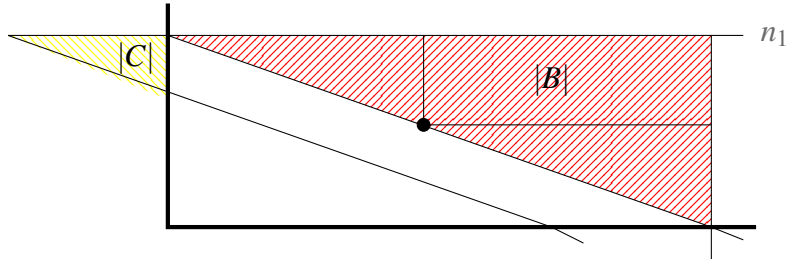
	\end{center}

  As in the previous cases, $|\overline{A}|=|A|-|C|=|B|-|C|=\frac{c(\Gamma_1)}{2}-|C|$, where $C$ is again a triangle. Moreover, as in the case $\ell_1^{(2)}=0$, the part $|C|$ which is loosed comes from $I_{\ell_0^{(2)}}$.

Finally, observe that the points in
$$
A=\{(i,j) \in D'_{\ell_1^{(2)}} : ia_0 + j a_1 + k a_2 > n_2  a_2\}
$$
are those satisfying $in_1 + j \ell_{0}^{(1)} > q_2 - \frac{kq_2}{n_2}$. From the previous pictures, it is easy to see that if \(n_2a_2>n_1a_1\) then there are exactly $\lfloor \frac{kq_2}{n_2} \rfloor$ points \((i,j)\in D^{'}_{l_{1}^{(2)}}\) satisfying 
\begin{equation}\label{eqn:prop3.7aux1}
   q_2 > in_2 + j \ell_0^{(1)} > q_2 - \frac{kq_2}{n_2}. 
\end{equation}
However, if \(n_2a_2<n_1a_1\) it may happen that some of the possible solutions of eqn.~\eqref{eqn:prop3.7aux1} may belong to the triangle \(C\) and thus not in \(D^{'}_{l_{1}^{(2)}}.\) From which we obtain the desired results.

\end{proof}

 Proposition \ref{prop:lemma32} will help us to show that under certain assumption we can give a precise formula for \(\tau_m^{+}\) and in particular to show that the lower bound is sharp.

\begin{theorem}\label{th:moduliatinfinity}
    Let \(\Gamma=\langle a_0,a_1,a_2\rangle\) be a free numerical semigroup such that \(n_1a_1>a_2\) and \(a_0>a_1\) such that \(n_2a_2>n_1a_1.\) Then, 
\[\tau_2^{+}=n_2\tau_1^{+}+\frac{(n_2-1)(c(\Gamma_1)-2)}{2}-\frac{(n_2-1)a_2}{e_1}+\sum_{k=1}^{n_2-1}\left(2\Big \lfloor \frac{ka_2}{e_1} \Big \rfloor - \sigma_{1,k}(a_s)-\gamma_{1,k}(a_s)+1\right).\]
In particular, if \(n_2=2\), then the lower bound in Theorem \ref{thm:dimensionpositive} is attained.
\end{theorem}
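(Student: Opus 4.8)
The plan is to read off the monomial basis of $T^1=\mathbb{C}[\Gamma_2]^2/N$ given by Theorem \ref{thm:basisdeffree} in the case of three generators ($g=2$), and then count, slice by slice in the $u_2$-exponent, which basis vectors contribute to $\tau_2^+$, i.e.\ those whose monomial has strictly positive weight $\sum k_ia_i-n_ra_r$ as recorded in the weight list preceding Theorem \ref{thm:dimensionpositive}. For $g=2$ the basis splits into the part-(1) family $(u_0^ju_1^iu_2^{k_2})\vec e_1$ with $(i,j)\in E_{\ell_0^{(1)},n_1}$ and $0\le k_2\le n_2-1$, together with the part-(4) family $(u_0^ju_1^iu_2^{k_2})\vec e_2$ with $(i,j)\in D_{\ell_1^{(2)}}$ and $0\le k_2\le n_2-2$.

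For the part-(1) family, the slice $k_2=0$ is exactly the image under the graded embedding $\Phi$ of Theorem \ref{thm:injective} of the monomial basis of $T^1(C^{\Gamma_1})$; since $\Gamma_1=\langle a_0,a_1\rangle$ differs from $\langle n_1,\ell_0^{(1)}\rangle$ only by the common scaling $e_1$, its positive part has cardinality $\tau_1^+=\tau^+_{\langle n_1,\ell_0^{(1)}\rangle}$. For each fixed $k_2=k$ with $1\le k\le n_2-1$, the positive-weight vectors in that slice are indexed by the set $A_{2,k}$ of Proposition \ref{prop:Proposition1}, whose cardinality $b_{2,k}$ is given there in closed form; the hypotheses $a_2<n_1a_1$ and $a_0>a_1$ are precisely those under which Proposition \ref{prop:Proposition1} applies. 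Hence the part-(1) contribution is $\tau_1^+ + \sum_{k=1}^{n_2-1} b_{2,k}$.

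For the part-(4) family, the slice $k_2=k$ with $0\le k\le n_2-2$ contributes $d_{2,k}=|D^+_{\ell_1^{(2)},k}|$ positive-weight vectors. The hypothesis $n_2a_2>n_1a_1$ is exactly the regime in which the upper bound of Proposition \ref{prop:lemma32} is attained, so $d_{2,k}$ is given there in closed form through $\mu_1=(n_1-1)(\ell_0^{(1)}-1)$ and $\lfloor ka_2/e_1\rfloor$ (noting $e_1=n_2$ here); one must also account for the single socle monomial deleted in passing from $D'_{\ell_1^{(2)}}$ to $D_{\ell_1^{(2)}}$, which in this regime always has positive weight. Adding the two contributions, expanding $b_{2,k}$, and simplifying using $e_1=n_2$ together with the defining relation $n_2a_2=\ell_0^{(2)}a_0+\ell_1^{(2)}a_1$, the floor terms and constants collapse to the stated closed formula for $\tau_2^+$. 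This last bookkeeping --- reconciling the floor sums over the two ranges $\{0,\dots,n_2-2\}$ and $\{1,\dots,n_2-1\}$, and verifying that Proposition \ref{prop:lemma32} yields the exact (not merely generic) count for every admissible $k$ so that no lattice point near the relevant hyperplane is miscounted --- is the step demanding the most care.

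Finally, when $n_2=2$ the part-(4) family consists of the single slice $k_2=0$, so that $\tau_2^+=\tau_1^+ + b_{2,1} + d_{2,0}$. Under the standing hypotheses $J_2=\varnothing$ (since $n_1a_1>a_2$) and $L_1=\{2\}$ (since $a_2<n_1a_1$), so the refined lower bound of Theorem \ref{thm:dimensionpositive} collapses to $\tau^+_{\langle n_1,\ell_0^{(1)}\rangle}+\sum_{k=1}^{n_2-1}b_{2,k}+(n_2-1)d_{2,0}$; as $\tau_1^+=\tau^+_{\langle n_1,\ell_0^{(1)}\rangle}$ and $n_2-1=1$, this equals the value just computed, whence the lower bound is sharp.
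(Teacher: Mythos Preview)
Your proposal is correct and follows essentially the same approach as the paper: decompose the basis $\mathcal{B}_2$ from Theorem~\ref{thm:basisdeffree} into the $\vec e_1$-part and the $\vec e_2$-part, then count positively weighted elements slice by slice using Proposition~\ref{prop:Proposition1} for the former and Proposition~\ref{prop:lemma32} (in its sharp case $n_2a_2>n_1a_1$) for the latter. Your write-up is in fact more explicit than the paper's, which simply invokes the two propositions and asserts the resulting formula; in particular, your separate treatment of the $k_2=0$ slice, the remark on the deleted socle element, and the verification of the sharpness claim for $n_2=2$ via $J_2=\varnothing$, $L_1=\{2\}$ are details the paper leaves implicit.
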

\begin{proof}
As in the proof of Theorem \ref{thm:dimensionpositive}, we have the basis of the miniversal deformation described as
 \[
 \mathcal{B}_2=\{u_{2}^{k}\Phi(z)\vec{e}_1\;\colon\;z\in\mathcal{B}_{1},\;0\leq k\leq n_{2}-1\}\sqcup\Big \{(u_0^i u_1^j u_2^{k_2}) \vec{e}_2\;\colon\;(i,j)\in D_{\ell_{1}^{(2)}}\Big \}.
 \]
 Recall that \(\mathcal{B}_1\) is in one to one to correspondence with \(E_{l_0^{(1)},n_1}.\) As by hypothesis we have \(n_1a_1>a_2\) therefore \(\deg(u_{2}^{k}\Phi(u_0^iu_1^j)\vec{e}_1)>0\) if and only if 
 \[
 (i,j)\in A_{2,k}=\Big \{(i,j)\in E_{\ell_0^{(1)},n_1} : ia_0+ja_1+ka_2 > n_1 a_1 \ \mathrm{and} \ a_2<n_1a_1\Big \}.
 \]
 Finally, the number of positively weighted elements in \(D_{l_1^{(2)}}\) are those corresponding to the the sets \(D^{+}_{l_1^{(2)},k}\) with \(k=0,\dots,n_2-2.\) Hence, by using Proposition \ref{prop:lemma32} and Proposition \ref{prop:Proposition1} we obtain
 \[\tau_2^{+}=n_2\tau_1^{+}+\frac{(n_2-1)(c(\Gamma_1)-2)}{2}-\frac{(n_2-1)a_2}{e_1}+\sum_{k=1}^{n_2-1}\left(2\Big \lfloor \frac{ka_2}{e_1} \Big \rfloor - \sigma_{1,k}(a_s)-\gamma_{1,k}(a_s)+1\right),\]
 which completes the proof.
\end{proof}

As one may observe in Proposition \ref{prop:lemma32} and Proposition \ref{prop:Proposition1}, the computation of \(d_{m,k}\) for a free numerical semigroup without any restrictions can be extremely difficult. The reason is that the numbers \(d_{m,k}\) depend on the relations between the different \(n_ja_j\) as we realize looking at the proof of Proposition \ref{prop:lemma32}. Similarly to Proposition \ref{prop:lemma32} one should be able to write out an algorithmic formula or estimation for \(\sum d_{m,k}\). However, the algorithmic calculation is a hard combinatorial problem. Given the length and tedious nature of these combinatorics for more than three generators, it is clearly a good problem to assign to a computer program as Theorem \ref{thm:basisdeffree} provides the explicit basis of the deformation. 

\subsection*{A final remark on the dimension of the moduli space}
 Throughout the section we have been working without loss of generality with a system of generators \(\{a_0,\dots, a_g\}\) of the numerical semigroup satisfying the condition \(n_ia_i\in \langle a_0,\dots,a_{i-1}\rangle\) and \(n_i>1\) for all $i=1,\ldots , g$. However, our statements encode more general information. More concretely, the monomial basis of \(T^1\) is clearly not unique. Nevertheless, the dimension of the moduli space is intrinsic to the semigroup itself, so in particular it does not depend on the generating set of the semigroup. Our results should be then interpreted as follows: given a free numerical semigroup, there exists a system of generators that allows us to explicitly compute a monomial basis for \(T^1.\) This monomial basis leads us to the estimation of the dimension of the moduli space in a recursive way. By looking at Corollary \ref{cor:dimModuli1}, a part of this estimation is independent of the system of generators---essentially the one given by Theorem \ref{thm:injective}--- and the other part actually depends on \(\{a_0,\dots,a_g\}\) as it is linked to the obtained monomial basis.


\section{Moduli of plane curves with fixed semigroup}\label{sec:4}
In this closing section we will focus on the comparison of the two special cases of free numerical semigroups which have already appeared in the previous sections, namely

\begin{enumerate}[(a)]
    \item\label{cond:beta} \(\Gamma\) is free and \(n_{i}a_i<a_{i+1}\) for \(1\leq i\leq g-1\), i.e. plane curve semigroups;
    \item \label{cond:delta} \(\Gamma\) is free and \(n_{i}a_i>a_{i+1}\) for \(1\leq i\leq g-1\) i.e. semigroups of curves with only one place at infinity.
\end{enumerate}


 
According to Bresinsky and Teissier \cite{bresinsky72,teissierappen} given a free semigroup of type \((a)\) there always exists a germ of irreducible plane curve singularity with such a semigroup. Moreover, in contrast to the case of negative homogeneous deformations of Pinkham \cite{Pinkham} (see Section \ref{sec:dimensionmoduli}), Teissier \cite{teissierappen} showed that one can construct the analytic moduli space associated to a branch with such a semigroup by using the positive part of the deformation. This moduli space contrast with the one constructed by Pinkham, as Teissier's moduli refers to the analytic equivalence of germs on \(\mathbb{C}^2.\)

\subsection*{Teissier's and Pinkham's moduli}

Consider an irreducible germ of plane curve singularity in \(\mathbb{C}^2.\) Its topological type is encoded in its semigroup of values \(\langle \overline{\beta}_0,\dots,\overline{\beta}_g\rangle\) which it is well known to be a free semigroup satisfying the condition \(n_i\overline{\beta}_i<\overline{\beta}_{i+1}.\) Moreover, one can see that the generators of this semigroup can be associated with a family of approximates of the curve.
\medskip

Concerning plane curve semigroups, Teissier shows the following. 
\begin{theorem}\cite[I 2.10]{teissierappen}
	The deformation of \(C^{\Gamma}\), obtained by applying the base change \((D_{\Gamma},\mathbf{0}):=(\mathbb{C}^{\tau_{+}}\times\{\mathbf{0}\},\mathbf{0})\hookrightarrow(\mathbb{C}^{\tau},\mathbf{0})\) to the miniversal deformation of \(C^{\Gamma}\), is a miniversal constant semigroup deformation of \(C^{\Gamma}\), i.e. we have the following commutative diagram
	
	\[
	\xymatrix{
		(C^{\Gamma},\mathbf{0})\ar[dr]\ar[r]&(\mathcal{X},\mathbf{0})\ar[rr]& &(\mathbb{C}^{\tau},\mathbf{0})  \\
		& (\mathcal{X}_\Gamma,\mathbf{0}):=(\mathcal{X},\mathbf{0})\times_{(\mathbb{C}^{\tau},\mathbf{0})}(D_{\Gamma},\mathbf{0})\ar[rr]\ar[u]& &(D_{\Gamma},\mathbf{0}).\ar[u]}
	\]
\end{theorem}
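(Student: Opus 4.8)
This is a theorem of Teissier; here we only indicate the strategy one would follow, referring to \cite{teissierappen} for the details. Write $\mathrm{Def}^{\mathrm{es}}_{C^{\Gamma}}$ for the functor of constant--semigroup deformations of the branch $C^{\Gamma}$ and $T^{1,\mathrm{es}}\subseteq T^1$ for its tangent space. The plan is to establish three statements and combine them: (a) the family $(\mathcal X_{\Gamma},\mathbf 0)\to(D_{\Gamma},\mathbf 0)$ produced by the stated base change is a constant--semigroup deformation; (b) $\mathrm{Def}^{\mathrm{es}}_{C^{\Gamma}}$ is smooth and $T^{1,\mathrm{es}}$ coincides with the positive--weight part $\bigoplus_{\nu>0}T^1_{\nu}$, of dimension $\tau_{+}$; (c) the Kodaira--Spencer map of the family in (a) is an isomorphism onto $T^{1,\mathrm{es}}$, whence miniversality.

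For (a) and the tangent--space part of (b) the main tool is the parametrization. Over $D_{\Gamma}$ the total space is cut out by $F_i=f_i+\sum_{j\in P_{+}}w_js_j^i$ with every $s_j$ of strictly positive weight for the grading $\deg u_i=a_i$; hence the $\mathbb G_m$--action extends to the whole family and the base change is $\mathbb G_m$--equivariant. First I would parametrize a fibre, $t\mapsto(t^{a_0}+\cdots,\dots,t^{a_g}+\cdots)$, and observe that a positive--weight perturbation of the defining equations amounts to adding terms of strictly higher order to this expansion, so that the value semigroup of $\mathbb C[[t^{a_0}+\cdots,\dots,t^{a_g}+\cdots]]\subseteq\mathbb C[[t]]$ remains equal to $\Gamma$; here one uses the recursive approximate--root presentation of the free semigroup underlying Delorme's equations. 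Conversely a perturbation of weight $\le 0$ would lower some order and produce a strictly smaller semigroup, so---since the excerpt records that $T^1$ has no weight--zero component---$T^{1,\mathrm{es}}=\bigoplus_{\nu>0}T^1_{\nu}$ and $\dim_{\mathbb C}T^{1,\mathrm{es}}=\tau_{+}$.

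For the smoothness in (b) one passes to the functor of deformations of the parametrization $\phi\colon(\mathbb C,0)\to(\mathbb C^{g+1},0)$ of $C^{\Gamma}$: its constant--semigroup subfunctor is an open subgerm of an affine space of admissible Puiseux coefficients, hence unobstructed, and the long exact sequence comparing the cotangent cohomology of the finite map $\phi$ with that of its image transports unobstructedness to $\mathrm{Def}^{\mathrm{es}}_{C^{\Gamma}}$. Granting this, Schlessinger's criterion yields a miniversal equisingular deformation with smooth base of dimension $\dim_{\mathbb C}T^{1,\mathrm{es}}=\tau_{+}$. By (a) the family over $D_{\Gamma}$ factors through $\mathrm{Def}^{\mathrm{es}}_{C^{\Gamma}}$, and since the $s_j$ were chosen as part of a $\mathbb C$--basis of $T^1$ its Kodaira--Spencer map at $\mathbf 0$ is the claimed isomorphism onto $\bigoplus_{\nu>0}T^1_{\nu}=T^{1,\mathrm{es}}$; together with $\dim D_{\Gamma}=\tau_{+}$ and the smoothness of the functor, this forces the family to be miniversal, and the asserted diagram commutes by construction. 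I expect the principal difficulty to be the \emph{global} version of (a)---that positive--weight perturbations preserve the semigroup over all of $D_{\Gamma}$, not merely to first order---together with the unobstructedness in (b); both are handled via the parametrization and the approximate--root machinery in \cite{teissierappen} (see also Wahl's work on equisingular deformations of plane curves and \cite[Part~II]{Greuelbook} for the deformation theory of parametrizations).
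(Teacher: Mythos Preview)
The paper does not prove this theorem at all: it is stated as a citation of Teissier \cite[I~2.10]{teissierappen} and used as a black box in the discussion of Section~\ref{sec:4}. There is therefore no ``paper's own proof'' to compare your sketch against.

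Your outline is a reasonable summary of the standard strategy (constant-semigroup deformations via the parametrization, identification of $T^{1,\mathrm{es}}$ with the positively graded part, smoothness/unobstructedness, Kodaira--Spencer isomorphism), and it is appropriately honest about deferring the hard steps to \cite{teissierappen}. If your goal is only to match what the paper does, you could simply replace your sketch by the bare citation; if instead you want to include an indication of why the result holds, your sketch is more informative than what the paper offers, though you should be aware that in the paper's setting $\Gamma$ is specifically a plane branch semigroup (type~(a) in Section~\ref{sec:4}), so the approximate-root machinery and Wahl's equisingularity theory that you invoke are indeed the relevant tools.
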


As in the case of plane curves constant semigroup is equivalent to constant topological type, it is now natural to consider the analytic moduli space of plane curves with a given semigroup. Analytically equivalence of germs induces an equivalence relation \( \thicksim \) in \( D_\Gamma \). The topological space \( D_\Gamma / \thicksim \), with the quotient topology, will be denoted by \( \widetilde{M}_\Gamma \) and it is called the \emph{moduli space} associated to the semigroup \( \Gamma \). Let \( m : D_\Gamma \longrightarrow \widetilde{M}_\Gamma \) be the natural projection and let \( D^{(2)}_\Gamma \) be the following subset of \( D_\Gamma \)
	\[ D^{(2)}_\Gamma := \{ \boldsymbol{v} \in D_\Gamma \ |\ (G^{-1}(\boldsymbol{v}), \boldsymbol{0})\ \textrm{is a plane branch} \}.  \]
	Then, Teissier proves in \cite[Chap. II, 2.3 (2)]{teissierappen} that \( D^{(2)}_\Gamma \) is an analytic open dense subset of \( D_\Gamma \) and that \( m(D^{(2)}_\Gamma) \) is the analytic moduli space \( M_\Gamma \) of plane branches with semigroup \( \Gamma \) in the sense of Zariski \cite{Zarbook}.~
 Observe that this moduli constrasts with the moduli that we could compute from the point of view of Pinkham given in Section \ref{sec:dimensionmoduli}. In that case, we used the negative part in the deformation and in this case we are using the positive part of the deformation. 
\medskip

On the other hand, free semigroups of type \((b)\) appear in the context of global geometry of plane projective curves with only one place at infinity. Recall that if we denote by $L$ the line at infinity in the compactification of the affine plane to $\mathbb{P}^2$. It is said that the curve $C$ \emph{has only one place at infinity} if the intersection $C\cap L$ is a single point $p$ and $C$ has only one analytic branch at $p$. The point $p$ is often called \emph{the point at the infinity}. For these curves, we can define the following semigroup, which is nothing but the Weiertra\ss~semigroup at the point \(p\):
\begin{defin}
    Let $C$ be a  curve having only one place at infinity, with $p$ the point at infinity. We call \emph{semigroup at infinity} of $C$ to the subsemigroup  $S_{C,\infty}:=\left\{-\nu_{C,p}(h) \ | \ h\in A\right\}$  of $\mathbb N$, where $A$ is the $k$-algebra $\mathcal{O}_C(C\setminus\{p\})$.
\end{defin}
It is a very well known theorem by Abhyankar and Moh \cite{AMoh} that such semigroup is finitely  generated. See the references by Abhyankar \cite{Abh3}, Abhyankar and Moh \cite{AMoh,AbhMoh1}, Sathaye and Stenerson \cite{SatSte} and Suzuki \cite{suzuki} for further information. In fact, Abhyankar-Moh theorem \cite{AMoh} shows that the minimal system of generators \(\{\delta_0,\dots,\delta_s\}\) of $S_{C,\infty}$ satisfies:
\begin{itemize}
\item[(1)] $e_{s}=1$ and $n_i>1$ for every $i \in \{1,\ldots,s\}$, where $n_0:=1$ and $n_i:=e_{i-1}/e_{i}$ for $i\in\{1,\ldots, s\}$ with $e_i:=\gcd (\delta_0,\delta_1,\ldots,\delta_{i})$ for $i\in\{0,1,\ldots, s\}$.

\item[(2)] For $i\in \{1,\ldots,s\}$, the integer $n_i\delta_i$ belongs to the semigroup $\mathbb{N}\delta_0+\mathbb{N}\delta_1+\cdots + \mathbb{N}\delta_{i-1}$.

\item[(3)] $\delta_i< n_{i-1} \delta_{i-1} $
for every $i\in \{1,\ldots,s\}$.
\end{itemize}

In particular, those conditions imply that \(S_{C,\infty}\) is a free semigroup of type \(b).\) It is usual to call \(\delta\)--sequence to a set of generators $\Delta$ of the semigroup at infinity; here we write $\langle \Delta \rangle$. Since we focus on the point at infinity of a plane projective curve, there are two different affine charts depending on whether we consider the line at infinity or its complement. 
\medskip

Set $(X:Y:Z)$ the homogeneous coordinates on $\mathbb{P}^2$ and consider that $Z=0$ is the equation of the line at infinity $L$ and $(1:0:0)$ the coordinates of the point at infinity $p$. Suppose that $(x,y)$ are the coordinates of the affine chart given by the condition $Z\neq 0$ and $(u=y/x,v=1/x)$ the coordinates around the point $p$. Notice that, given a polynomial $f(x,y)$ in $ k[x,y]$, it holds that 
$$
    f(x,y)=v^{-\deg(f)}\hat{f}(u,v),
$$
where $\hat{f}(u,v)$ is the expression of $f(x,y)$ in the local coordinates $(u,v)$. For simplicity, we denote by $q$ an element of $k[x,y],$ or $k[u,v],$ and by $\nu_{C,p}(q)$ the value $\nu_{C,p}(\hat{q}),$ where $\hat{q}$ is the element in the fraction field of $\mathcal{O}_{C,p}$ corresponding to $q.$ Therefore, one can show \cite{GalMonJalg} that there is a relation between the generators of $S_{C,\infty}$ and the generators of the semigroup \(\langle\overline{\beta}_0,\dots,\overline{\beta}_g\rangle\) of irreducible plane curve at its (unique) singular point (which is precisely the point at infinity):

\begin{enumerate}[label=(\textbf{R.\arabic*})]
        \item\label{R1} If $\nu_{C,p}(u)$ does not divide $\nu_{C,p}(v)$, then $s=g, \overline{\beta}_0=\delta_0-\delta_1,\overline{\beta}_1=\delta_0$ and $\overline{\beta}_i=\frac{\delta_0^2}{e_{i-1}}-\delta_i$ for $1<i\leq g$.
        \item\label{R2} If $\nu_{C,p}(u)$ does divide $\nu_{C,p}(v)$, then $s=g+1,\overline{\beta}_0=\delta_0-\delta_1$  and $\overline{\beta}_{i-1}=\frac{\delta_0^2}{e_{i-1}}-\delta_i$ for $1< i \leq g+1$.
    \end{enumerate}

At this point, we may observe the following relation between the monomial curves defined by both semigroups. Thanks to the relations \ref{R1}, we can naturally define a well-defined graded morphism between graded algebras:
\[\begin{array}{ccc}
     \mathbb{C}[Y_0,\dots,Y_g]&\xrightarrow{\Phi}& \mathbb{C}[u_0,\dots,u_g] \\
     Y_0&\mapsto&u_0:=\frac{Y_1}{Y_0}\\
     Y_1&\mapsto&u_1:=\frac{1}{Y_0}\\
    Y_i&\mapsto&u_i:=\frac{Y_i}{Y_0^{\overline{\beta}_1/\overline{e}_{i-1}}}.
\end{array}
\]
Observe that from this morphism, \(\mathbb{C}[u_0,\dots,u_g]\) is naturally graded by \((-\overline{\beta}_0,\dots,-\overline{\beta}_g).\) Obviously, one can perform an isomorphism of graded algebras in order to have a positive the positive grading \((\overline{\beta}_0,\dots,\overline{\beta}_g)\) in \(\mathbb{C}[u_0,\dots,u_g].\) Therefore, it is easy to see that any negatively graded deformation of the monomial curve associated to a \(\delta\)--sequence give rise to a positively graded deformation of the monomial curve associated to a \(\overline{\beta}\)--sequence. A particular case of such deformations is the defining equations of the curves.
\medskip 

As the numerical semigroup generated by a $\delta$-sequence is free, there exists a unique expression of $n_i\delta_i$ for all $i=1,\ldots,n$ such that 
\[n_i\delta_i=\ell_{0}^{(i)}\delta_0+\cdots+\ell_{i-1}^{(i)}\delta_{i-1},\]
where \(\ell_{0}^{(i)},\dots,\ell_{i-1}^{(i)}\in\mathbb{N}\) such that $\ell_{0}^{(i)}\geq 0$ and $\ell_j^{(i)}<n_j,$ for $j=1,\ldots,i-1$.
Therefore, we have that 
\begin{equation}\label{eq_monomialcurve_freesemigroup}
f_i=Y_{i}^{n_i}-Y_{0}^{\ell_{0}^{(i)}}Y_{1}^{\ell_{1}^{(i)}}\cdots Y_{i-1}^{\ell_{i-1}^{(i)}}=0\quad\text{for}\;1\leq i\leq g,
\end{equation}
where \(Y_0,\dots,Y_g\) are  affine coordinates with weight $w(Y_i)=\delta_i$. We can now consider the following fiber \(C_{\infty}\in\mathbb{C}^{g+1}\) of the miniversal deformation of the monomial curve

\begin{equation}\label{equation_monomialY_g}
\left\lbrace \begin{array}{rll}
    h_1:= &f_1(Y_0,Y_1)+tY_2=0 & \\
    h_i:= & f_i(Y_0,\dots,Y_i)+tY_{i+1}=0 & \text{for}\; i=2,\dots,g-1, \\
    h_g:=& f_g(Y_0,\dots,Y_g)=0.&
\end{array}\right.
\end{equation}

It follows easily that \(C_{\infty}\) is a plane curve embedded in \(\mathbb{C}^{g+1}.\) As this is a negative weight deformation, following Pinkham \cite{Pinkham1} the projectivization of the fibers gives rise to a plane curve with only one place at infinity. On the other hand, we can consider the affine chart such that \(u_0=Y_1/Y_0, u_1=1/Y_0\). The relation \ref{R1} yields
$$
w(u_0)=-(w(Y_1)-w(Y_0))=\overline{\beta}_0 \text{ and } w(u_1)=-(-w(Y_0))=\overline{\beta}_1
$$
Now, we rewrite eqns. \eqref{equation_monomialY_g} in the new affine coordinates. To do so, consider first
\[
    h_1(Y_0,Y_1,Y_2)=Y_1^{n_1}-Y_0^{\ell_0^{(1)}} -tY_2=0.
\]
In order to see the last equation in the affine chart $X_0\neq 0$, we must divide $h_1(Y_0,Y_1,Y_2)$ by $Y_0^{\delta_1/e_1}$. The relation \ref{R1} leads to
$$
    \dfrac{\delta_1}{e_1}=\dfrac{\overline{\beta}_1-\overline{\beta_0}}{\overline{e}_1} \ \ \ \ \text{ and } \ \ \ \ \overline{n}_1=\dfrac{\overline{\beta}_0}{\overline{e}_1},
$$
and therefore
\[ 
\dfrac{Y_1^{n_1}-Y_0^{\delta_1/e_1}-tY_2}{Y_0^{\delta_1/e_1}}  =Y_0^{\overline{n}_1}\bigg(u_0^{\overline{\beta}_1/\overline{e}_1}-u_1^{\overline{n}_1}-t\frac{Y_2}{Y_0^{\overline{\beta}_1/\overline{e}_1}}\bigg).
\]

The weight of the affine coordinate \(u_2\colon =\frac{Y_2}{Y_0^{\overline{\beta}_1/\overline{e}_1}}\) is
$$
w(u_2)=-\bigg(w(Y_2)- \frac{\overline{\beta}_1}{\overline{e}_1} w(Y_0)\bigg) =\overline{\beta}_2
$$
by \ref{R1}. In general, for $i=2,\ldots , g-1$ we have
$$
    \dfrac{n_i\delta_i}{e_0}=\dfrac{n_i}{\delta_0}\bigg(\dfrac{\delta_0^2}{e_{i-1}}-\overline{\beta}_i\bigg)=\dfrac{\overline{\beta}_1}{\overline{e}_i}-\dfrac{\overline{n}_i\overline{\beta}_i}{\overline{\beta}_1}
$$
and, from the equation $h_i=0$ in \eqref{equation_monomialY_g}, for $i=2,\ldots,g-1$ it follows that 
$$
\dfrac{f_i(Y_0,\dots,Y_i)+tY_{i+1}=0}{Y_0^{n_i\delta_i/e_0}}=Y_0^{\overline{n}_i\overline{\beta}_i/\overline{\beta}_1}\bigg(h(u_0,\ldots,u_i)-t\dfrac{Y_{i+1}}{Y_0^{\overline{\beta}_1/e_i}}\bigg) 
$$
Consequently, the weight of the coordinate \(u_{i+1}\colon =\frac{Y_{i+1}}{Y_0^{\overline{\beta}_1/\overline{e}_i}}\) is 
$$
w(u_{i+1})=-\bigg(w(Y_{i+1})-\frac{\overline{\beta}_1}{\overline{e}_i} w(Y_0)\bigg) =\overline{\beta}_{i+1}.
$$
The case $\nu_{C,p}(u)$ does divide $\nu_{C,p}(v)$ follows in a similar manner, by using the relation \ref{R2} instead.
\medskip

Observe that the change of charts give rise to an irreducible plane curve singularity with semigroup \(\langle\overline{\beta}_0,\dots,\overline{\beta}_g\rangle.\) According to the discussion at the begining of the section, those equations belong to a certain class in the moduli space of branches with semigroup \(\Gamma.\) 

\subsection*{Relation between the moduli of a plane branch semigroup and the semigroup at infinity}
Therefore, it is natural to ask for possible links between the geometry of the moduli space of branches with fixed semigroup \(\Gamma\) à la Teissier and the geometry of the moduli space à la Pinkham of projective curves with only one point at infinity with fixed Weierstra\ss~semigroup \(\langle \Delta \rangle\). The following examples show the difficulty of this question.

\begin{ex}
    Let us consider the semigroup of an irreducible germ of plane curve defined by \(\langle 6,15,77\rangle.\) Again in virtue of the relations \ref{R1} and \ref{R2} we can see that there is no \(\delta\)--sequence that can be associated to this semigroup. 
\end{ex}

As we have seen, given a \(\delta\)--sequence associated to a semigroup of a plane branch \(\Gamma\), the equation at \(0\) is a plane equisingular deformation of the monomial curve associated to \(\Gamma\). Thus, it is natural to ask if the different fibers associated to a \(\delta\)--sequence provide different analytic types of plane branches with fixed semigroup \(\Gamma\). Let us first show a family of curves for which this actually happens.

\begin{ex}\label{example:Zariskiinvariant}
    
Let $C:f(x,y)=0$ be the germ of an irreducible plane curve singularity with one Puiseux pair, i.e. a germ of plane curve with equation 
$$
f(x,y)=x^{\overline{\beta}_1}-y^{\overline{\beta}_0}+\sum_{i\overline{\beta}_0+j\overline{\beta}_1>\overline{\beta}_0\overline{\beta}_1}a_{i,j}x^{i}y^{j},
$$
    in an adequate choice of coordinates and where $\gcd(\overline{\beta}_1,\overline{\beta}_0)=1$. An important analytic invariant is the Zariski exponent associated to $C$. The Zariski exponent was defined by Zariski in \cite[p.~25]{Zarbook} as follows. If $\nu(\Omega)$ stands for the valuation of the differential form $\Omega=\overline{\beta}_1 y dx - \overline{\beta}_0 x dy$, then the Zariski exponent of $C$ is defined to be 
\[
\lambda=\lambda(C) = \nu(\Omega)- n + 1.
\]

For the precise meaning of valuation of a differential form, the reader is refereed again to Zariski \cite[p.~25]{Zarbook}.
\medskip

In the case of a curve with one Puiseux pair, Peraire \cite{peraire} showed that one can identify the Zariski exponent with a unique monomial in the previous equation. In fact, it is the \((i,j)\) such that \(\overline{\beta_1}~\overline{\beta}_0+(\lambda-\overline{\beta}_1)=i\overline{\beta}_0+j\overline{\beta}_1.\) Thanks to this property, whenever possible, we can produce a family of \(\delta\)-sequences which produces topologically equivalent curves with different Zariski exponent. 
\medskip

To the semigroup \(\Gamma\) we may always associate a family of \(\delta\)--sequences defined by 
\[\delta_0=a,\quad\delta_1=a-1,\delta_2=a^2\overline{\beta}_0-\overline{\beta_1},\quad\text{where}\quad \frac{\overline{\beta}_1}{\overline{\beta}_0}>a>\sqrt{\frac{\overline{\beta}_1}{\overline{\beta}_0}}.\]

Obviously, as in the previous example, it may happen that such an \(a\) does not exist. In that case, there is a unique \(\delta\)--sequence associated to \(\Gamma\) which is \(\delta_0=\overline{\beta}_1\) and \(\delta_1=\overline{\beta}_1-\overline{\beta}_0\). Observe that in that case, the associated equation is precisely the quasi-homogeneous term \(y^{\overline{\beta}_0}-x^{\overline{\beta}_1}\).
\medskip

If there exists such a family, the monomial curve associated to each \(\delta\)--sequence is
\[f_1=u_1^a-u_0^{a-1},\quad f_2=f_1^{\overline{\beta}_0}-u_{1}^{\ell_1}u_0^{\ell_0},\]
where \(\ell_1(a-1)+\ell_0a=\delta_2.\) The germ \(C_0\) at \(\mathbf{0}\) is 
\[
(u_1^a-u_0)^{\overline{\beta}_0}-u_1^{\ell_1}u_0^{a\overline{\beta}_0-(\ell_1+\ell_0)}=0.
\]

Under the analytic change of coordinates \(x=u_1,y=u_1^a-u_0\), \(C_0\) is analytically equivalent to \[f(x,y)=y^{\overline{\beta}_0}-x^{\overline{\beta}_1}-\sum_{k=1}^{a\overline{\beta}_0-(\ell_1+\ell_0)}{\alpha \choose k} (-1)^kx^{\overline{\beta}_1-ak}y^k,\]
where \(\alpha=a\overline{\beta}_0-(\ell_1+\ell_0).\) Therefore, it is straightforward to see that the Zariski exponent of \(f\) only depends on \(a\); it is thus different for every $a$.

\end{ex}

\begin{ex}

Let \(\Gamma=\langle 10,36,183\rangle\) be a numerical semigroup. 
Consider the following $\delta$-sequences such that the semigroup of values of the corresponding curves with only one place at infinity is $\Gamma$ (in fact they are the only ones with this prescribed $\Gamma$):
$$
\Delta_1=\langle 36,26,465\rangle, \ \ \Delta_2=\langle 20,10,4,17\rangle, \ \ \Delta_3=\langle 30,20,54,267\rangle.
$$

 Consider the monomial curve $C_{\Delta_1}$ parametrized by the generators in the delta sequence $\Delta_1$. This curve satisfies the equations

\begin{equation*}
   C_{\Delta_1}:\left\lbrace \begin{array}{rl}
      f_1=   & y^{18} - x^{13} \\
      f_2=   & f_1^2- x^{15}y^{15}
    \end{array}\right.
\end{equation*}

The second equation therein will be transformed into

\begin{equation*}
    \begin{array}{rl}
         \hat{f}_1=&v^{-36}\left(\left(u^{18}-v^5\right)^2-u^{15}v^6\right) 
    \end{array}
\end{equation*}

when we consider affine coordinates in the $(u,v)$-chart. Similarly, for the equations of the curve parametrized according to $\Delta_2$, the third equation in

\begin{equation*}
    C_{\Delta_2}:\left\lbrace  \begin{array}{rl}
        f_1= & y^2 -x \\
        f_2= & f_1^5-x\\
        f_3= & f_2^2 -xyf_1 = \left(\left(y^2-x\right)^5-x\right)^2- xy\left(y^2-x\right)\\
    \end{array}\right.
    \end{equation*}
  reads off in the affine $(u,v)$-coordinates,
after the changes of variables $(w=u,s=u^2-v)$ and $(w=u,v=-s+w^2)$, as

\begin{equation*}
    \begin{array}{rl}
        \hat{f}_2& =
                 \left(s^5 - \left(-s+w^2\right)^9\right)^2 -sw\left(-s+w^2\right)^{16}.
    \end{array}
\end{equation*}

Last we consider the monomial curve $C_{\Delta_3}$ and the equations

\begin{equation*}
    C_{\Delta_3}:\left\lbrace  \begin{array}{rl}
        f_1= & y^3 -x^2 \\
        f_2= & f_1^5-x^{9}\\
        f_3= & f_2^2 - x^{16}f_1 =  \left(\left(y^3-x^2\right)^5-x^9\right)^2-x^{16}\left(y^3-x^2\right)
    \end{array}\right.
    \end{equation*}
    
The changes of variables $(w=u,s=u^3-v)$ and $(w=u,v=-s+w^3)$ lead to the equation
 \begin{equation*}
    \begin{array}{rl}
        \hat{f}_3
            &=\left(s^5 -\left(-s+w^3\right)^6\right)^2 +s\left(-s+w^3\right)^{11}.
    \end{array}
\end{equation*}   
 We have computed with \textsf{Singular} \cite{singular} the Tjurina algebras of each of the preceding curves. We see that the Tjurina algebras corresponding to $\hat{f}_1$, $\hat{f}_2$, $\hat{f}_3$ are not isomorphic, therefore the corresponding defining curves are nor analytically equivalent, though they are topologically equivalent since they have the same value semigroup.
\end{ex}

The whole previous discussion and the given examples motivate and encourage us to ask the following.

\begin{question}
Assume that, for some \(k\geq 1\), there are \(\delta\)-sequences \(\Delta_1,\dots,\Delta_k\) such that they have a common associated plane branch semigroup \(\Gamma.\) Is there any relation between the moduli spaces \(M_{\Delta_1},\dots,M_{\Delta_k}\) that can be described in terms of their representation in \(M_\Gamma\)?
\end{question}

\medskip


\bibliographystyle{plain}
\bibliography{bibliodeltadef}
\end{document}